\documentclass[11pt]{paper}
\usepackage[a4paper, margin=25mm]{geometry}
\usepackage[english]{babel}

\usepackage[utf8x]{inputenc}
\usepackage{amsmath, amsthm, amssymb}
\usepackage{mathtools}
\usepackage{enumitem}
\usepackage[nobysame,initials]{amsrefs}
\usepackage{nameref,hyperref,cleveref,cite}
\usepackage{dsfont,bbm}
\usepackage{graphicx}
\usepackage{xcolor}
\usepackage{textcomp}
\usepackage{comment}

\usepackage{wrapfig}
\usepackage{pdfpages}
\usepackage{comment}

\usepackage{array}
\newcolumntype{C}[1]{>{\centering\arraybackslash}p{#1}}
\renewcommand{\arraystretch}{1.5}
\usepackage{multirow}

\AtBeginDocument{%
  \def\MR#1{}%
}

\usepackage{pgfplots}
\pgfplotsset{compat=1.12}
\usepackage{tikz}
\usepackage{tikzscale}
\usetikzlibrary{shapes,arrows,calc,math,3d,patterns}
\usepackage{subcaption}
\usetikzlibrary{arrows.meta}

\title{Planarity and number of crossings in general models of random geometric graphs}

\author{Hanna Döring \thanks{Osnabrück University, Germany, hanna.doering@uni-osnabrueck.de} \and Kinga Nagy \thanks{Osnabrück University, Germany, kinga.nagy@uni-osnabrueck.de}}
\date{\today}

\theoremstyle{plain}
\newtheorem{theorem}{Theorem}[section]
\newtheorem{lemma}[theorem]{Lemma}

\newtheorem{claim}[theorem]{Claim}

\theoremstyle{definition}

\newtheorem{remark}[theorem]{Remark}

\Crefname{claim}{Claim}{Claims}
\crefname{appendix}{Appendix}{Appendices}
\Crefname{appendix}{Appendix}{Appendices}

\newcommand{\EE}{\mathbb{E}}
\newcommand{\PP}{\mathbb{P}}
\newcommand{\VV}{\mathbb{V}}
\DeclareMathOperator{\Po}{Poisson}
\newcommand{\ind}{\mathbf 1}

\DeclareMathOperator{\dTV}{d_{TV}}
\DeclareMathOperator{\Pareto}{Pareto}

\DeclareMathOperator{\Vol}{Vol}
\DeclareMathOperator{\diam}{diam}

\renewcommand{\l}{\left}
\renewcommand{\r}{\right}

\newcommand{\dd}{{\rm d}}

\newcommand{\con}{\leftrightarrow}

\newcommand{\caseif}{\text{if }}

\newcommand{\R}{\mathbb{R}}
\newcommand{\N}{\mathbb{N}}

\renewcommand{\c}{\mathcal}

\begin{document}

\maketitle

\begin{abstract}
    Consider a random geometric graph with vertices given by a Poisson point process, and whose edges depend on independent marks corresponding to the vertices and pairs of vertices.
    In this paper, we study two related questions on this general model: the number of edge crossings in a projection of this graph, and its graph-theoretical planarity. 
    We focus on models with heavy-tailed mark distributions, in particular with polynomial tails with arbitrary exponents. 
    We show that the asymptotic behaviour varies significantly depending on this exponent.
\end{abstract}

{\small \paragraph{Keywords:} random geometric graph; random connection model; number of crossings; planarity}

{\small \paragraph{MSC2020 classification:}
60D05, 
05C80, 
05C10, 
60F05 
}

\tableofcontents

\section{Introduction}

When considering a geometrically defined graph in $\R^2$, one might look at the following two problems: how many edge crossings occur in this embedding of the graph, and is it graph-theoretically possible to draw it without any crossings? 

Take the hard threshold random geometric graph (Gilbert graph) $\c G_t$, whose vertices are given by a homogeneous Poisson point process in a unit volume convex body $W\subset \R^d$ with intensity $t$, and edges given by pairs of points having distance at most $r_t>0$. Consider the model as $t\to \infty$, and $r_t\to 0$ as $t\to \infty$.
In \cite{CDR18}, Chimani, Döring and Reitzner studied the issue of crossings in arbitrary dimensions: for the Gilbert graph $\c G_t$ with $d\geq 2$, consider its orthogonal projection to an arbitrary plane, and take the number of crossings $X_t$ in the projected construction. They obtained asymptotic expectation and variance results of $X_t$, and in particular showed that the expected number of crossings is of the order $t^4 r_t^{2d+2}$ \cite{CDR18} (for an extended version see \cite{CDRarxiv}). 
Döring and de~Jonge \cite{DdJ25} also derived normal- and Poisson approximation results for $X_t$, including showing that the point process defined by the edge crossings of the projection tends to a Poisson point process on the plane. 

As a contrast to this, Biniaz et al. \cite{Biniaz} considered the planarity of the binomial hard random geometric graph in $d=2$, that is, whether it can be drawn in the plane without crossings. In particular, they aimed to see how this compares to having zero crossings in the geometric representation given by the construction itself; they found the threshold function of the latter independently from \cite{CDR18}.
Specifically, it was shown \cite{Biniaz} that the threshold function of $r_t$ for planarity is $t^{-\frac 58}$, hence if $r_t=o(t^{-\frac 58})$ as $t\to \infty$, then $\c G_t$ is planar with high probability. 
Comparatively, the connection distance needs to satisfy the stronger $r_t = o(t^{-\frac 23})$ condition for zero crossings in its intrinsic drawing with high probability (w.h.p.).

The planarity of the other central random graph in the literature, the Erd\H{o}s--R\' enyi model, has also been investigated. In particular, as shown in the classical paper by Erd\H{o}s and R\' enyi \cite[Theorem~8b]{ER60}, the threshold connection probability for planarity is $p=\frac 12n$, where $n$ is the number of vertices. More precisely, they show that the graph is planar with high probability if $p<cn$ for some $c<\frac 12$, and non-planar w.h.p. if $p>cn$ for some $c>\frac 12$. More recently, work has also been done to understand planarity in the critical window, see e.g. \cite{NVJ15}.

In this paper, we aim to expand on both the question of planarity and number of crossings, by considering a wider range of random geometric graphs, where the existence of each edge is determined by marks on its endpoints, as well as the pair of points itself.
More formally, given independent, identically distributed vertex markings $R_y$ and i.i.d. edge markings $W_{y,z}$, two distinct vertices $y$ and $z$ are connected whenever their distance is at most $\varphi(R_y,R_z,W_{y,z})$ for a certain function $\varphi$.
We focus particularly on models allowing for long edges, corresponding to marks with heavy-tailed distributions.
Observe that for any pair of points, their connection probability depends only on their distance: if $\|y-z\|=x$, then the probability that they form an edge is $\PP(\varphi(R_y,R_z,W_{y,z})\geq x)\eqcolon g(x)$. 

To show the flavour of our general results, we state the following.
\begingroup
\renewcommand{\thetheorem}{A}
\begin{theorem}[Expected number of crossings]\label{thm:A}
    Let $\c G_t$ denote any model of random geometric graph with vertex- and edge markings as defined above, and denote by $X_t(L)$ the number of edge crossings of $\c G_t$ when projected onto the plane $L$.
    Then there exist constants $c_1,c_1'$ and $c_2,c_2'$ depending only on $W$ such that
    \[c_1' t^4 \l[ \int_0^{c_1} g(x) x^{d} \dd x \r]^2 \leq \EE X_t(L)\leq c_2' t^4 \l[ \int_0^{c_2} g(x) x^{d} \dd x\r]^2.\]
    Further, if there exists $(R_t)_{t>0}$ with $R_t\to 0$ as $t\to \infty$ such that the integral of $g(x)x^d$ is dominated by the contribution over the interval $[0,R_t]$,
    then
    \[
    \EE X_t(L) = t^4 \l[\int_0^{R_t} g(x) x^{d}\dd x\r]^2 \frac 18 
    \begin{multlined}[t]
    c_d (d+1)^2 I_W(L)\\ 
        \times \big(1 +\c O_t(R_t) \big)\l(1+\c O_t \l(\frac{ \int_{R_t}^{c_2}g(x) x^{d}\dd x}{\int_0^{R_t}g(x) x^{d}\dd x}\r)\r)
    \end{multlined}
    \]
    with $c_d$ and $I_W(L)$ given in \eqref{eq:crossing constant} on p.~\pageref{eq:crossing constant}.
\end{theorem}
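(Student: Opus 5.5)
By the multivariate Mecke formula, $\EE X_t(L)$ is an integral over quadruples of points. A crossing in the projection involves two edges $\{y_1,y_2\}$ and $\{y_3,y_4\}$ on four distinct vertices. Edges sharing a vertex cannot cross in the generic sense, and that configuration has probability zero. Hence
\[
\EE X_t(L)=\frac{t^4}{8}\int_{W^4} \PP\big(y_1\con y_2,\ y_3\con y_4\big)\,\ind\{\pi_L[y_1,y_2]\cap \pi_L[y_3,y_4]\neq\emptyset\}\,\dd(y_1,\dots,y_4),
\]
where $\pi_L$ is the orthogonal projection onto $L$ and $\frac18$ accounts for the orderings of the two edges and of their endpoints. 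The two edges involve disjoint vertex marks and disjoint edge marks, so the connection events are independent. Their joint probability is therefore $g(\|y_1-y_2\|)\,g(\|y_3-y_4\|)$. This factorisation is the key structural point that makes the general model tractable.

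Next I would change variables: $y_2=y_1+x_1u_1$ and $y_4=y_3+x_2u_2$, with $x_i>0$ and $u_i\in S^{d-1}$, so that $\dd y_2=x_1^{d-1}\dd x_1\dd u_1$. For fixed lengths and directions, the set of $(y_1,y_3)$ whose projected segments cross has, up to boundary effects, volume proportional to $x_1x_2$ times a parallelogram area $|\det(\pi_Lu_1,\pi_Lu_2)|$ times a $(d-2)$-dimensional integral over the $L^\perp$ fibres. This produces $x_1^dx_2^d$ and the geometric constant $c_d I_W(L)$. The factor $(d+1)^2$ should come out of normalising the directional integral via $\int_0^R x^d\dd x=R^{d+1}/(d+1)$, matching the Gilbert-graph constant of \cite{CDR18}. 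I would import that constant by comparison with the hard-threshold computation, rather than recomputing it.

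For the two-sided bounds with no assumption on $g$, the upper bound follows by dropping the boundary constraint. The crossing volume is at most $C x_1x_2$ for all $y_1\in W$, and the lengths satisfy $x_i\le \diam W =: c_2$, giving $c_2' t^4[\int_0^{c_2}g x^d]^2$. For the lower bound, I would restrict to $y_1$ in an inner parallel body of $W$ and to $x_i\le c_1$ small. Then the crossing region of positive-volume directions lies fully inside $W$, which yields $c_1'$ times the same square.

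For the asymptotic statement, split each $x_i$-integral at $R_t$. The part with both $x_i\le R_t$ gives the main term. Here the only error comes from $y_1$ within distance $\c O(R_t)$ of $\partial W$, contributing a relative error $\c O(R_t)$ because $W$ is convex with unit volume. The parts where some $x_i>R_t$ are bounded by the crude upper estimate and give the factor $1+\c O\big(\int_{R_t}^{c_2}/\int_0^{R_t}\big)$. I expect the main obstacle to be the exact evaluation of the crossing-volume kernel: getting the constant $c_dI_W(L)$ and controlling the boundary error uniformly in the long-edge regime. This is where I would lean on the computation in \cite{CDR18,CDRarxiv}.
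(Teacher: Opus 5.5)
Your skeleton matches the paper's: the Mecke formula with the factor $\frac18$, and factorisation into $g(\|y_1-y_2\|)g(\|y_3-y_4\|)$. Then a crossing volume equal to the projected parallelogram area times an $L^\perp$-fibre volume; the paper packages this as the density of the edge-length measure $\mu_L$ in \Cref{lemma:expectation}. Finally, inner parallel bodies for the lower bound and a split at $R_t$. Your crude two-sided bounds go through as described.

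The step that fails is the constant. You propose to calibrate it against the hard-threshold result of \cite{CDR18,CDRarxiv}. As \Cref{remark:RGG constant} records, the constant stated in \cite[Theorem~12]{CDRarxiv} is incorrect, because an exponent is lost in \cite[Lemma~8]{CDRarxiv}. Calibrating against it produces a constant different from $c_d=2\pi\kappa_{d-2}^2\mathbf B(\frac32,\frac d2)^2$. Calibrating instead against the corrected Gilbert statement in \Cref{sec:RGG} would be circular, since that correction rests on exactly this computation. So the constant must be computed.

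In your coordinates it is $I_W(L)\int_{S^{d-1}}\int_{S^{d-1}}|\det(\pi_Lu_1,\pi_Lu_2)|\,\dd u_1\,\dd u_2$. The angle of $\pi_Lu$ within $L$ is uniform and independent of $\|\pi_Lu\|$, and $\int_{(S^1)^2}|\sin|=8\pi$. Hence this equals $\frac2\pi I_W(L)\big(\int_{S^{d-1}}\|\pi_Lu\|\,\dd u\big)^2$. Since $\int_{S^{d-1}}\|\pi_Lu\|\,\dd u=\pi(d+1)\kappa_{d-2}\mathbf B(\frac32,\frac d2)$, you get exactly $c_d(d+1)^2I_W(L)$.

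The paper reaches the same numbers by integrating over thin annuli. It uses the cross-section $p_{a,b}(x)$ of an annulus over a point of $L$ and the Beta integral $\int_0^bx^2(b^2-x^2)^{\frac{d-2}{2}}\,\dd x=\frac12b^{d+1}\mathbf B(\frac32,\frac d2)$. The factor $d+1$ per edge comes from differentiating $b^{d+1}$ (equivalently, from the spherical integral of $\|\pi_Lu\|$). It does not come from normalising $\int_0^Rx^d\,\dd x$, as you suggest.

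A smaller inaccuracy concerns the error term: the $\c O(R_t)$ error does not come only from $y_1$ near $\partial W$. Even when $y_1$ is deep inside $W$, $y_3$ ranges over entire $L^\perp$-fibres, which reach $\partial W$, where $y_4\in W$ can fail. Also, the fibre volume $z\mapsto\lambda_{d-2}(W\cap(z+L^\perp))$ has to be replaced by its value at $y_1|_L$ across a parallelogram of diameter $\c O(R_t)$. Controlling this needs an $L^1$-continuity estimate for fibre volumes of a convex body. The paper sandwiches the kernel between $q_v^-$ and $q_v^+$ and invokes \cite[Lemma~4]{DdJ25} for this.
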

\addtocounter{theorem}{-1}
\endgroup

Note that the precise definition of $X_t(L)$ is given in \eqref{eq:kernel} and the subsequent paragraph on page~\pageref{eq:kernel}.

The paper is structured as follows. 
\Cref{sec:prelim} contains the geometric, probabilistic, and graph-theoretical ingredients necessary for the rest of the paper, including a detailed description of the models.
In the following sections, for each model we determine the asymptotic probability that the graph is planar, as well as study the number of crossings in the projected graph. Apart from standard tools such as the Mecke formula and the second-moment method, we use techniques from integral geometry, as well as an atypical application of induction to find certain subgraph counts. To be able to use induction i.e. build a graph vertex by vertex, the key idea is to consider subgraphs with restricted edge lengths or marks. This allows us to derive a recurrence at the level of functions, with the function parameter being the maximal edge length or mark.
\Cref{sec:RGG} gives an overview of results for the Gilbert graph.
\Cref{sec:SRGG} and \Cref{sec:RR} consider planarity and crossings in two specific heavy-tailed models: one defined purely by edge markings, and one purely by vertex markings.  In both cases, we let the mark distribution have a polynomial tail with an arbitrary exponent.
\Cref{sec:crossings} handles crossings in the most general setting: the proof of Theorem~\ref{thm:A} is given therein (see \Cref{thm:crossing expectation}), as well as certain variance bounds.
Lastly, \Cref{sec:discussion} provides notes and a discussion of the results across the board.

\section{Preliminaries}\label{sec:prelim}


\subsubsection*{Convex- and integral geometry}

Throughout the paper, we work in $d$-dimensional Euclidean space $\R^d$ with $d\geq 2$, equipped with the Euclidean norm $\|\cdot \|$. For a measurable set $A$, write $\lambda_d(A)$ for its Lebesgue measure.
Denote by $B^d(x,r)$ the closed $d$-dimensional ball of radius $r>0$ centred at $x \in \R^d$, and in particular write $B^d$ for the origin-centred unit ball $B^d(\mathbf 0,1)$, and $S^{d-1}$ for its boundary sphere.
We denote the volume of the $d$-dimensional unit ball by $\kappa_d\coloneq \lambda_d(B^d)$.
A convex body is a compact convex set with non-empty interior.

For two points $u,v\in \R^d$, denote the line segment connecting them by $[u,v]$.
For two Borel sets $A,B\subset \R^d$, write $A+B\coloneq \{a+b\colon a\in A, b\in B\}$ for their Minkowski sum. 
Note that $B^d(x,r) = x+rB^d$.
Given a convex body $K$ and positive value $\delta>0$, we write $K_{-\delta}$ for the inner parallel set of $K$, defined as $K_{-\delta}\coloneq \{x\in K\colon x+ \delta B^d \subset K\}$. 

For a plane (i.e. linear subspace of dimension two) $L\subset \R^d$, we write $L^\perp$ for its orthogonal complement; similarly, for a vector $u\in \R^d$, we write $u^\perp$ for the set of vectors orthogonal to $u$, which is the hyperplane (i.e. linear subspace of dimension $d-1$) with normal vector $u$ containing the origin.
Given a plane $L$, and a set $A\subset \R^d$, we write $A|_L$ for the orthogonal projection of $A$ onto $L$; similarly, the projection of a point $v\in \R^d$ onto $L$ is $v|_L$.

\subsubsection*{Poisson processes and markings}

Fix a unit volume convex body $W\subset \R^d$, and let $\eta_t$ be a homogeneous Poisson point process in $W$ with intensity $t>0$.
In addition, we consider an independent $\kappa_v$-marking of $\eta_t$, where $\kappa_v$ is a given probability measure on $[0,\infty)$. For $y\in \eta_t$, write $R_y$ for its corresponding mark, and think of it as a random radius attached to $y$. 
The resulting process $\hat \eta_t \coloneq \{(y,R_y)\colon y\in \eta_t\}$ is a Poisson point process on $\R^d \times [0,\infty)$ with intensity measure $t\lambda_d|_W \otimes \kappa_v$, where $\lambda_d|_W(\cdot)=\lambda_d(\cdot \cap W)$ is the restriction of the Lebesgue measure to $W$. 
Note that we generally write $\mu|_A$ for the restriction of a measure $\mu$ to the set $A$, including using this notation for point processes.

We also consider edge markings of $\eta_t$: attach an independent mark $W_{y,z}$ to every unordered pair of distinct points $y,z\in \eta_t$,  where $W_{y,z}$ is distributed according to the probability measure $\kappa_e$ on $[0,\infty)$.
This can indeed be precisely constructed; for a more rigorous definition, we refer the reader to \cite[Section~2]{LNS21}.

For a positive integer $k$, write $\eta_{t,\neq}^k$ for the set of distinct $k$-tuples in $\eta_t$.
By the Mecke formula (see e.g. \cite[Theorem~4.4]{bookLP18}), 
\[\EE \sum_{(y_1,\ldots, y_k) \in \eta_{t,\neq}^k} h(y_1,\ldots, y_k) = t^k \int\limits_{W^k} h(y_1,\ldots, y_k) \dd y_1 \ldots \dd y_k\]
for any measurable function $h$. 
As an extended version, if $\hat \eta_t$ is a marked process whose mark distribution $\kappa_v$ has Lebesgue density $f$, we have
\begin{equation*}
\EE \sum_{(\hat y_1,\ldots, \hat y_k) \in \hat \eta_{t,\neq}^k} 
h(\hat y_1,\ldots, \hat y_k, \hat \eta_t) = t^k \iint\limits_{(W\times[0,\infty))^k} 
\begin{multlined}[t]
\EE \bigg[ h(\hat y_1,\ldots, \hat y_k, \hat \eta_t \cup \{\hat y_1,\ldots, \hat y_k\})\bigg] \\
\times f(x_1)\cdot \ldots \cdot f(x_k)
\dd y_1 \dd x_1 \ldots \dd y_k \dd x_k.
\end{multlined}
\end{equation*}
where on the right hand side, $\hat y_i\coloneq (y_i,x_i)$.
This equality is also referred to as the Slivnyak--Mecke formula.


\subsubsection*{Random graph models}

As a general setting, we consider a random graph $\c G_t=(V,E)$ with vertex set $V = \eta_t$ and edge set $E$ given by
\[E\coloneq \big\{(y,z)\in \eta_{t,\neq}^2 \colon \|y-z\|\leq \varphi(R_y,R_z,W_{y,z})\big\}\]
for some measurable function $\varphi:[0,\infty)^3\to [0,\infty)$.
We write $y\con z$ for the event that $y,z\in \eta_t$ are connected by an edge.
By construction, the connection probability depends only on the distance of the points, which we denote by $g(\cdot)$: that is, for any two vertices $y\neq z$, 
$g(\|y-z\|) = \PP(y\con z) = \PP(\|y-z\|\leq \varphi(R_y,R_z,W_{y,z})),$
thus $g$ is exactly the tail of $\varphi(R_y,R_z,W_{y,z})$.
Further, the existence of two disjoint edges is independent; however, the existence of two edges that share a common endpoint is generally not.

We mention a few notable examples of specific models. Note that most of this terminology is highly non-standard.
\begin{enumerate}[label = \roman*)]
    \item The hard threshold random geometric graph (RGG or HRGG) or Gilbert graph, given by $\varphi \equiv r_t$.
    \item The soft random geometric graph (SRGG), given by $\varphi = W_{y,z}$. This is a special case of the random connection model (RCM), where each edge exists independently from each other with a given probability, depending on the location of the endpoints.
    \item The vertex marked or random radii RGG using the sum, min, or max kernel, given by $\varphi = R_y+R_z$, $\varphi = \min\{R_y,R_z\}$, or $\varphi = \max \{R_y,R_z\}$, respectively. The model arising from choosing the sum kernel is typically referred to as the (hard) Boolean model, and is understood to mean that two points are connected by an edge whenever the corresponding balls $B^d(y,R_y)$ and $B^d(z,R_z)$ have non-empty intersection. Along the same lines, the min and max kernel mean that either both, or at least one of the balls (respectively) contains the other point.
    \item The soft Boolean model, given by $\varphi = (R_y+R_z) W_{y,z}$, introduced first under this name and formalism in \cite{GGM22} in the context of scale-free percolation.
\end{enumerate}
In this paper, we consider the HRGG, the SRGG, and the random radii RGG with max-kernel, where the mark distribution (both on the vertices and edges) is given by the $\Pareto(r_t,\alpha)$ density $f_{\alpha}(x)\coloneq \alpha r_t^{\alpha} x^{-\alpha-1} \ind (x\geq r_t)$ for some parameter $\alpha>0$. 
Note that $\alpha$ is fixed, and $r_t\to 0$ as $t\to \infty$; for simplicity, we also assume that $r_t$ is monotone decreasing in $t$.
Observe that by construction, each edge in the HRGG is in both of the other models as well.
\Cref{fig:models} shows a realisation of these three models.
Observe the superhubs appearing in the random radii model (e.g. upper left corner): due to the max-kernel, a single large vertex mark produces many edges.

\begin{figure}[ht]
    \centering
    \hfill
    \begin{subfigure}{0.32\linewidth} 
    \includegraphics[width=\textwidth]{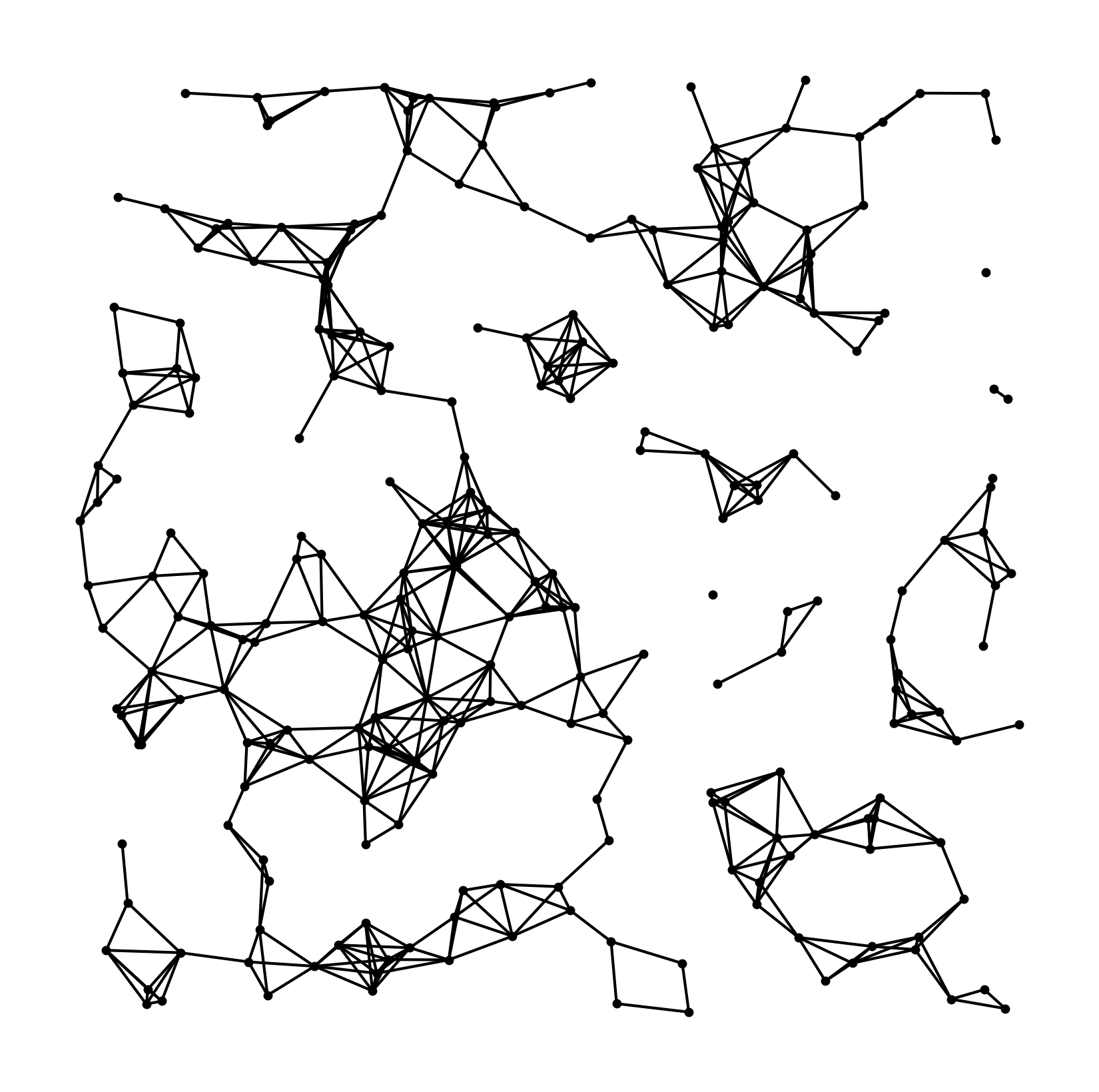}
    \end{subfigure}
    \hfill
    \begin{subfigure}{0.32\linewidth} 
    \includegraphics[width=\textwidth]{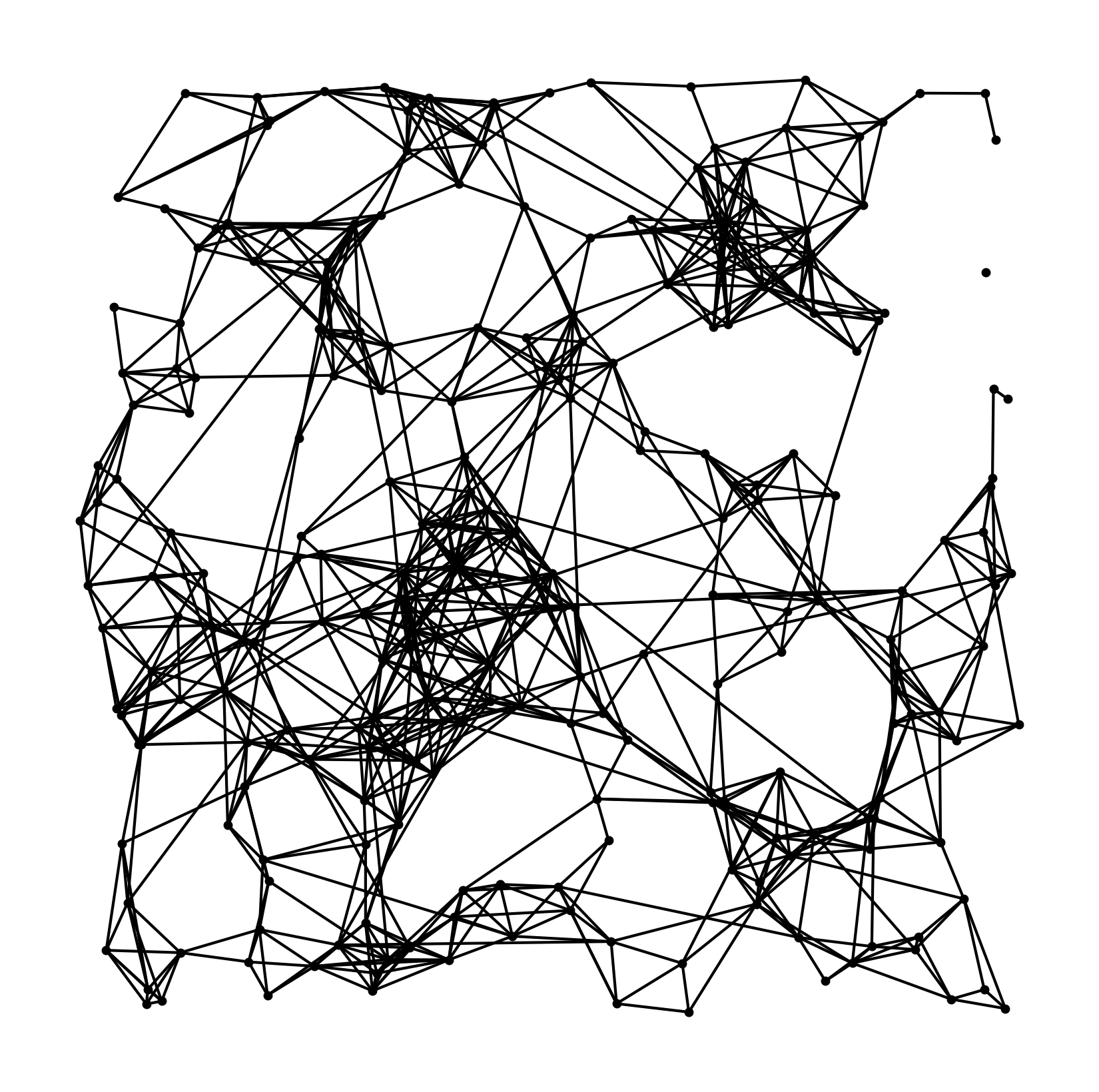}
    \end{subfigure}
    \hfill
    \begin{subfigure}{0.32\linewidth} 
    \includegraphics[width=\textwidth]{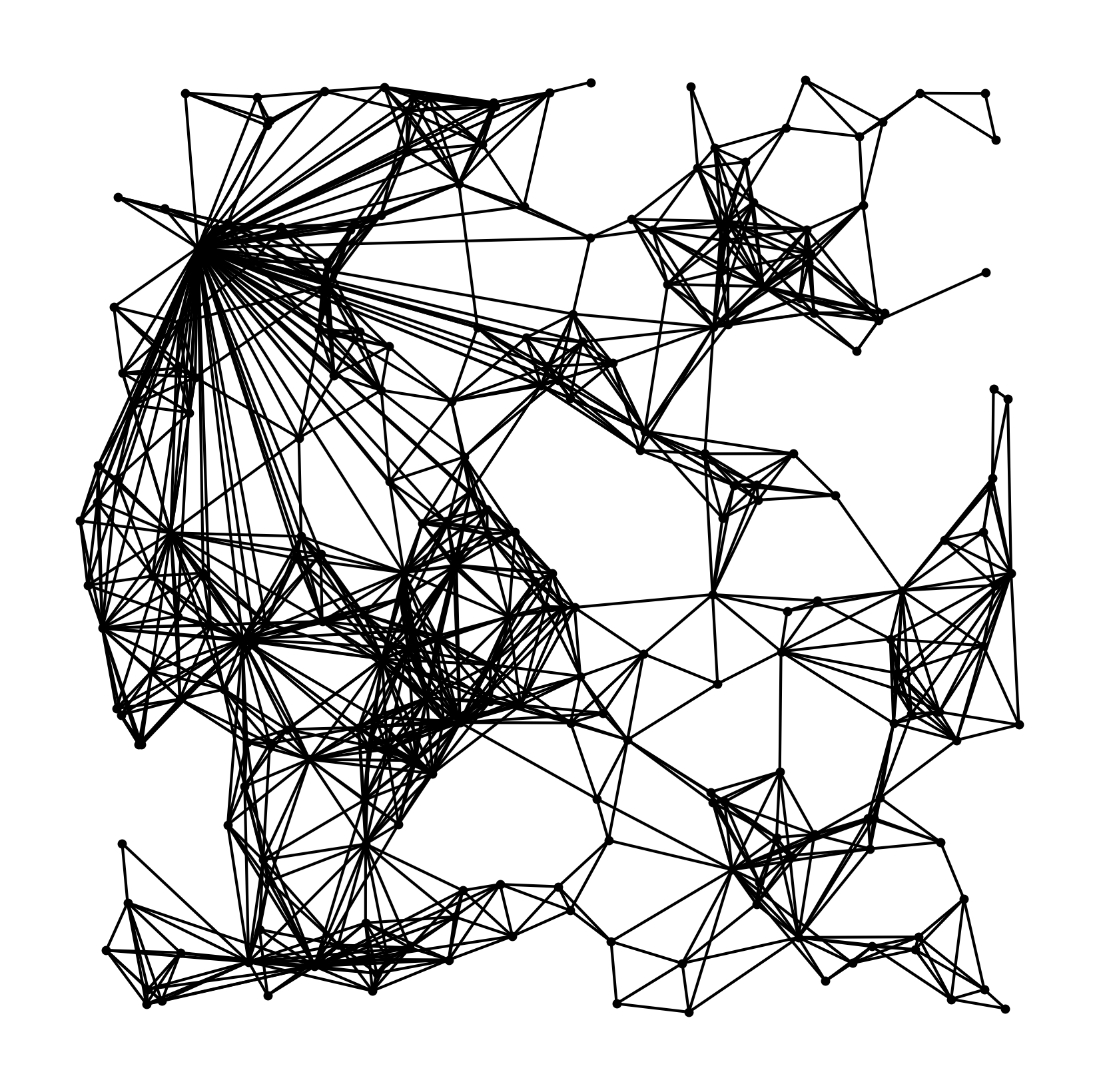}
    \end{subfigure}
    \hfill
    \caption{Left to right: the HRGG, SRGG, and max-kernel random radii RGG on the same point configuration and distance parameter.}
    \label{fig:models}
\end{figure}

We would also like to note that since $\max\{R_y,R_z\}\leq R_y+R_z\leq 2\max\{R_y,R_z\}$, we expect the sum- and max-kernel models to exhibit similar behaviour; we chose the latter purely for computational reasons. The min-kernel model is seemingly much different: due to the symmetry, a vertex with large radius will not necessarily have a high degree.




\subsubsection*{Graph theory}

Let $\Gamma=(V,E)$ be an (abstract) graph with vertex set $V$ and edge set $E$; we only consider simple graphs, meaning no loops or multiple edges allowed. We call the number of vertices $|V|$ the order of the graph.
If $\Gamma'=(V',E')$ with $V'\subset V$ and $E'\subset E$, then $\Gamma'$ is a subgraph of $\Gamma$, and if $E' = E \cap (V')^2$ exactly, then $\Gamma'$ is an induced subgraph. 
Notably, a subgraph does not need to include every edge dictated by $\Gamma$, while an induced subgraph does.
Given a vertex $v\in V$, we write $\Gamma-v$ for the graph without $v$, i.e. the subgraph induced by $V \setminus \{v\}$.
A path of length $n$ is a graph on $n+1$ vertices $x_0,\ldots, x_n$ with exactly the $n$ edges $(x_0,x_1),\ldots, (x_{n-1},x_n)$. 
The complete graph on $n$ vertices (i.e. every pair of distinct points is connected by an edge) is denoted by $K_n$.
Write $K_{n,m}$ for the complete bipartite graph with group sizes $n$ and $m$: that is, $K_{n,m}$ has $n+m$ vertices that can be divided into disjoint sets of $n$ and $m$ elements, where the edges are exactly given by pairs of points in different groups.


A subdivision of $\Gamma$ is a graph where the edges of $\Gamma$ have been replaced with independent paths between the endpoints: that is, none of the inner vertices of the paths intersect each other, or the original vertex set $V$. We refer to the elements of $V$ as the principal vertices, and to the rest as internal vertices.

We say that a graph $\Gamma$ is planar if it has a drawing in the plane without any crossings, that is, its vertices can be embedded into $\R^2$ such that drawing a line segment between vertices forming an edge does not create any edge crossings.
By a standard result in graph theory (see e.g. \cite[Theorem~4.4.6]{Diestel2017}), $\Gamma$ is planar if and only if it does not contain a subdivision of $K_5$ or $K_{3,3}$ as a subgraph. 

For a detailed introduction to graph theory, we refer the reader to \cite{Diestel2017}, and in particular \cite[Chapter~4]{Diestel2017} for a more rigorous introduction to drawings and planarity.

\subsubsection*{Asymptotic notation}

We use the following asymptotic notation: for non-negative functions $h_1$ and $h_2$, we write
\begin{center}
\renewcommand{\arraystretch}{2.1}
    \begin{tabular}{p{2.5cm}cp{4cm}}
    $h_1  = o(h_2)$  & for & $\displaystyle \lim_{x\to a} \frac{h_1(x)}{h_2(x)}  = 0$, \\
    $h_1  = \c O(h_2)$ & for &$\displaystyle\limsup_{x\to a} \frac{h_1(x)}{h_2(x)}<\infty$, \\
    $h_1 \sim h_2$ & for &$\displaystyle \lim_{x\to a} \frac{h_1(x)}{h_2(x)} = 1$,
    \end{tabular}
\end{center}
where we typically either have $a=0$ or $a=\infty$.
Lastly, we write $h_1 = \Theta(h_2)$ if $h_1= \c O(h_2)$ and $h_2 = \c O(h_1)$; this corresponds to the two functions being at most a multiplicative constant away from each other when close enough to $a$. 
We also write $o_x, \c O_x$ and $\Theta_x$ when $a=\infty$, and omit the $x\to \infty$ notation.
In addition, to keep computations more transparent, throughout the proof, we also write $h_1 \lesssim h_2$, $h_1\gtrsim h_2$, $h_1\approx h_2$ whenever $h_1 = \c O_t(h_2)$, $h_2 = \c O_t(h_1)$, and $h_1 = \Theta(h_2)$, respectively.



\section{The hard random geometric graph}\label{sec:RGG}

In this section, we let $\c G_t$ denote the hard threshold random geometric graph, that is, whose edge set is given by $\{(y,z)\in \eta_{t,\neq}^2\colon \|y-z\|\leq r_t\}$ for some $r_t>0$.

First, we state an existing result.

\begin{theorem}[{Number of crossings in the HRGG \cite[Theorem~12]{CDRarxiv}, \cite[Lemma~1]{DdJ25}}]
Let $X_t(L)$ denote the number of crossings of the hard random geometric graph $\c G_t$ when projected onto the plane $L$. Then
\[\EE X_t(L) = t^4 r_t^{2d+2} \frac 18 c_dI_W(L)  (1+ \c O_t(r_t))\]
with $c_d$ and $I_W(L)$ given in \eqref{eq:crossing constant} on p.~\pageref{eq:crossing constant}.
\end{theorem}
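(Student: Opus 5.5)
The count $X_t(L)$ is a sum over unordered pairs of vertex-disjoint edges whose projections onto $L$ cross. By the Mecke formula it is an integral over four points of $W$. I write it as
\[
X_t(L)=\frac18\sum_{(y_1,y_2,y_3,y_4)\in\eta_{t,\neq}^4}\ind\big(\|y_1-y_2\|\le r_t\big)\,\ind\big(\|y_3-y_4\|\le r_t\big)\,\ind\big([y_1,y_2]|_L\cap[y_3,y_4]|_L\neq\emptyset\big).
\]
The factor $\frac18$ accounts for the orderings within each edge and the order of the two edges. Applying the Mecke formula gives
\[
\EE X_t(L)=\frac{t^4}{8}\int_{W^4}\ind(\cdots)\,\dd y_1\cdots\dd y_4 .
\]

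I substitute $y_2=y_1+r_tu$ and $y_4=y_3+r_tv$ with $u,v\in B^d$, which produces a factor $r_t^{2d}$. The crossing condition says the projected segments $[y_1,y_1+r_tu]|_L$ and $[y_3,y_3+r_tv]|_L$ meet. For fixed $y_1,u,v$, this constrains $y_3|_L$ to a parallelogram in $L$ of area $r_t^2\,|\det(u|_L,v|_L)|$: it is the Minkowski difference of the two projected segments. The component of $y_3$ along $L^\perp$ remains free.

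Next I integrate over $y_3$, ignoring boundary effects. Write $\ell(y_1)$ for the $(d-2)$-volume of the fibre $W\cap(y_1+L^\perp)$, which is the chord through $y_1$ in direction $L^\perp$. The $y_3$-integral is then approximately
\[
r_t^2\,|\det(u|_L,v|_L)|\;\lambda_{d-2}\big(W\cap(y_1|_L+L^\perp)\big).
\]
Integrating over $y_1$ gives $\int_W \lambda_{d-2}(W\cap(y|_L+L^\perp))\,\dd y$, which I identify with $I_W(L)$. Integrating over $u$ and $v$ gives $\int_{B^d}\int_{B^d}|\det(u|_L,v|_L)|\,\dd u\,\dd v$, which I identify with $c_d$, matching \eqref{eq:crossing constant}. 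Together the main term is $\frac18 t^4 r_t^{2d+2}c_d I_W(L)$.

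The remaining work is controlling the error, which is where I expect the main difficulty. The approximation fails only when some point lies within distance $2r_t$ of $\partial W$. That region has volume $\c O(r_t)$ since $W$ is a convex body. On it, the integrand is still bounded by the same $r_t^{2d+2}$-scale bound with bounded fibres, so the error is $\c O(r_t)$ relative to the main term. Concretely, I would restrict $y_1$ to $W_{-2r_t}$, which forces $y_2$ into $W$. Then I would bound the mismatch between the exact fibre over the small parallelogram and $\ell(y_1)$ by the variation of the fibre length over a $2r_t$-neighbourhood. For convex $W$ this variation is $\c O(r_t)$ except near the boundary, by concavity of fibre lengths in the Brunn–Minkowski sense. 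This yields the $(1+\c O_t(r_t))$ factor.
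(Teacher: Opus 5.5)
Your main-term computation is correct and is the paper's argument specialised to $g=\ind_{[0,r_t]}$. The paper gets this statement as the case $R_t=r_t$ of its general expectation theorem, where a lemma computes the density of the pair of edge lengths on thin annuli; your rescaling $y_2=y_1+r_tu$, $y_4=y_3+r_tv$ does the same in one step for the hard kernel. The constants check out. Projecting Lebesgue measure on $B^d$ to $L$ gives density $\kappa_{d-2}(1-\|a\|^2)^{(d-2)/2}$ on $B^2$, so polar coordinates give $\int_{B^d}\int_{B^d}|\det(u|_L,v|_L)|\,\dd u\,\dd v=8\pi\kappa_{d-2}^2\big(\tfrac12\mathbf B(\tfrac32,\tfrac d2)\big)^2=c_d$. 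Fubini over $L^\perp$ turns your $y_1$-integral into $I_W(L)$.

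The one step that does not go through as written is the concrete error bound via concavity. For the exceptional layer to cost only $\mathcal O(r_t)$, ``near the boundary'' must mean an $\mathcal O(r_t)$-layer. But then the claim that the fibre length varies by $\mathcal O(r_t)$ over $2r_t$-discs outside it is false. For a ball in $\R^3$, the chord length at distance $\delta$ from $\partial(W|_L)$ is of order $\sqrt\delta$, so its oscillation there is of order $r_t/\sqrt\delta\gg r_t$. Brunn's concavity of $\ell^{1/(d-2)}$ ($d\ge 3$) alone only gives a Lipschitz constant $\lesssim 1/\delta$. Integrating $r_t/\delta$ over $W|_L$ yields $\mathcal O(r_t\ln(1/r_t))$, which misses the stated rate.

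What you need is an integrated bound. Your first observation, that all mismatch comes from points within $\mathcal O(r_t)$ of $\partial W$, supplies it once you apply it to translates of fibres. For $z,w\in L$ with $\|z-w\|\le 2r_t$, translating along $L$ gives $\lambda_{d-2}(W\cap(z+L^\perp))\le\lambda_{d-2}((W+2r_tB^d)\cap(w+L^\perp))$ and $\lambda_{d-2}(W_{-r_t}\cap(z+L^\perp))\ge\lambda_{d-2}(W_{-3r_t}\cap(w+L^\perp))$. Hence by Fubini
\[
\int_L\Big[\sup_{z\in L,\,\|z-w\|\le 2r_t}\lambda_{d-2}\big(W\cap(z+L^\perp)\big)-\inf_{z\in L,\,\|z-w\|\le 2r_t}\lambda_{d-2}\big(W_{-r_t}\cap(z+L^\perp)\big)\Big]\dd w\le\lambda_d(W+2r_tB^d)-\lambda_d(W_{-3r_t})=\mathcal O(r_t),
\]
using Steiner's formula and its inner analogue for convex bodies. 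Your $y_3$-integral lies between $\lambda_2(P)$ times this infimum and this supremum: $P\subset y_1|_L+2r_tB^2$ is your parallelogram, and restricting to $W_{-r_t}$ guarantees $y_4\in W$. The remaining weight $\lambda_{d-2}(W\cap(w+L^\perp))$ is bounded, so this gives the factor $1+\mathcal O_t(r_t)$. This is precisely the content of \cite[Lemma~4]{DdJ25}, which the paper invokes at this point.
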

\begin{remark}\label{remark:RGG constant}
    The correct constant $c_d$ differs from the one actually stated in \cite[Theorem~12]{CDRarxiv}; this is due to a missing exponent in the computation in \cite[Lemma~8]{CDRarxiv}. 
    Note also that \cite[Theorem~12]{CDRarxiv} states the first order asymptotic, while \cite[Lemma~1]{DdJ25} derives the order of the error term.
\end{remark}

The following result is a consequence of well-known properties of subgraph counts in the Gilbert graph.

\begin{theorem}[{Planarity of the HRGG}]\label{thm:RGGplanarity}
    Let $\c G_t$ denote the hard random geometric graph. 
    Then
    \[\lim_{t\to \infty} \PP(\c G_t \text{ planar}) = 
        \begin{cases}
            1, & \text{ if } \lim_{t\to \infty} r_t t^{\frac{5}{4d}} = 0 \\
            e^{-\lambda}, & \text{ if }   \lim_{t\to \infty} r_t t^{\frac{5}{4d}} \in (0,\infty) \\ 
            0, & \text{ if }   \lim_{t\to \infty} r_t t^{\frac{5}{4d}} = \infty  \\ 
        \end{cases}
        \]
        where $\lambda \coloneq \lim_{t\to \infty} \EE G_{K_5}\in(0,\infty)$ is the expected number of subgraphs of $\c G_t$ isomorphic to $K_5$.
\end{theorem}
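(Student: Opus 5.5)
We plan to reduce planarity to the appearance of small subgraphs, via Kuratowski's theorem, and then use standard subgraph-count results for the Gilbert graph (Penrose's small-subgraph theory). Write $\rho_t \coloneq t r_t^d$ for the expected degree up to constants. For a fixed connected graph $H$ on $k$ vertices, the Mecke formula gives $\EE G_H = \Theta(t^k r_t^{d(k-1)}) = \Theta(t\rho_t^{k-1})$ whenever $\rho_t\to 0$, since all $k$ points must lie within distance $\c O(r_t)$ of the first. In the critical window $r_t t^{5/(4d)}\to c$ we have $t\rho_t^4 = t^5 r_t^{4d}\to c^{4d}$. Hence subgraphs on $5$ vertices occur $\Theta(1)$ times, while any connected graph on $k\ge 6$ vertices has $\EE G_H=\Theta(t\rho_t^{k-1})\to 0$.

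\emph{Subcritical and critical upper bounds on non-planarity.} A non-planar graph contains a subdivision of $K_5$ or $K_{3,3}$. Every such subdivision is connected and has at least $5$ vertices, and the only one with exactly $5$ vertices is $K_5$ itself. The event of non-planarity is therefore contained in the union of two events: (a) $\c G_t$ contains a $K_5$; (b) $\c G_t$ contains a connected subgraph with at least $6$ vertices that is a subdivision of $K_5$ or $K_{3,3}$. Event (b) implies that $\c G_t$ has a connected component with at least $6$ vertices. Such a component contains a spanning tree and hence a subtree on exactly $6$ vertices. The expected number of $6$-vertex trees is $\c O(t\rho_t^5)$, since there are finitely many tree shapes and each contributes $\c O(t^6 r_t^{5d})$ by the Mecke formula. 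This tends to $0$ whenever $t^5r_t^{4d}=\c O(1)$. Thus in the first two regimes $\PP(\text{(b)})\to 0$. In the first regime $\PP(\text{(a)})\le \EE G_{K_5}=\Theta(t^5r_t^{4d})\to 0$, so $\PP(\c G_t \text{ planar})\to 1$.

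\emph{Critical regime.} Here $\PP(\c G_t\text{ planar}) = \PP(G_{K_5}=0) + o(1)$ by the above, and it remains to show $G_{K_5}\to \Po(\lambda)$ in distribution. We would invoke Penrose's Poisson convergence theorem for induced subgraph counts of the Gilbert graph (Random Geometric Graphs, Thm.~3.4/3.5), or alternatively prove it directly with the method of factorial moments. For the direct route, the $m$-th factorial moment of $G_{K_5}$ splits into a sum over $m$-tuples of copies. Disjoint, well-separated copies give $\lambda^m(1+o(1))$ by Mecke and the fact that $\lambda \coloneq \lim \EE G_{K_5}$ exists; this limit equals $\lim c_W t^5 r_t^{4d}$ with an explicit constant from the volume of the configuration space. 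Overlapping copies span a connected graph on at least $6$ vertices and so contribute $o(1)$ by the previous paragraph. One subtlety is that copies at distance less than $r_t$ may induce extra edges, but this is harmless since we count (not necessarily induced) $K_5$'s, which are exactly the $5$-vertex components here with high probability. Boundary effects of $W$ cost only a factor $1+\c O(r_t)$.

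\emph{Supercritical regime.} Here $t^5r_t^{4d}\to\infty$, and we show $\PP(G_{K_5}>0)\to 1$. If $\rho_t\to 0$, we use the second moment method, or monotonicity in $r_t$ (planarity is a decreasing property under coupling), reducing to a sequence $r'_t\le r_t$ with $t^5 r_t'^{4d}\to C$ for arbitrary $C$. Then $\liminf \PP(\text{non-planar})\ge 1-e^{-\lambda(C)}$ with $\lambda(C)\to\infty$. This monotone-coupling trick is cleanest and also covers $\rho_t\not\to 0$, since the Gilbert graphs on the same $\eta_t$ are nested in $r_t$. The main obstacle is the Poisson approximation in the critical window, specifically controlling overlapping pairs in the factorial moments. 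We would handle it via the reduction to connected subgraphs on $\ge 6$ vertices as above, or simply cite Penrose.
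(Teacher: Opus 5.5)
Your proposal is correct. In the subcritical and critical regimes it follows the paper's argument:
\begin{enumerate}[label=(\roman*)]
\item Kuratowski's theorem;
\item the order $t(tr_t^d)^{k-1}$ for counts of connected $k$-vertex subgraphs;
\item the observation that in the critical window no connected subgraph on $6$ vertices survives, so only $K_5$ itself can obstruct planarity;
\item Penrose's Poisson limit for $G_{K_5}$.
\end{enumerate}
You also make explicit a step the paper leaves implicit: any subdivision on at least $6$ vertices contains a $6$-vertex subtree, so bounding the expected number of such trees suffices.

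The genuine difference is the supercritical regime. The paper uses a variance bound for $G_{K_5}$ and Chebyshev to get $\PP(G_{K_5}\ge 1)\to 1$. You instead couple the Gilbert graphs on the same $\eta_t$ with radii $r'_t = C^{1/(4d)}t^{-5/(4d)}\le r_t$, and use that planarity is closed under taking subgraphs. Applying the critical case then gives $\limsup_t \PP(\c G_t \text{ planar})\le e^{-\lambda(C)}$, and you let $C\to\infty$.

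Your route needs no second-moment estimate. It is also indifferent to whether $tr_t^d\to 0$, whereas a variance bound must be valid across the whole regime $t^5r_t^{4d}\to\infty$. The price is that your supercritical case depends on the Poisson limit theorem. The paper's route does not use that input and additionally gives concentration of $G_{K_5}$ around its mean.

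Two small remarks on your factorial-moment sketch. First, for vertex-disjoint $5$-sets the Mecke formula factorises exactly, because the $K_5$-indicator depends only on the five points. So disjoint tuples contribute exactly $(\EE G_{K_5})^m$, whether or not the copies are close, and the worry about nearby copies inducing extra edges does not arise. Second, with high probability the $K_5$-copies are the components isomorphic to $K_5$, not all $5$-vertex components.
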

\begin{proof}
    For each case, we consider the characterisation of planarity through subgraphs.
    For an arbitrary connected graph $\Gamma$ on $n$ vertices, let $G_\Gamma$ denote the number of induced subgraphs of $\c G$ isomorphic to $\Gamma$. The behaviour of $G_\Gamma$ is well understood; see \cite[Section~6]{RGGbook}.
    In particular (see e.g. \cite[Proposition~3.1]{RGGbook}), the expectation $\EE G_\Gamma$ is of the order $t(tr_t^d)^{n-1}$.
    This implies that if $t(tr_t^d)^4 = t^5 r_t^{4d}\to 0$, no (connected) subgraph of order $5$ appears in $\c G_t$ with high probability, implying the planarity of the graph.
    If $t(tr_t^d)^4\to c\in(0,\infty)$, then on one hand, $t(tr_t^d)^5\to 0$, implying no subgraphs of order $6$ with high probability: thus no subdivisions of $K_{3,3}$, or proper subdivision of $K_5$ with high probability, and the only potential obstruction to planarity is a $K_5$ itself. On the other hand, we have (see \cite[Theorem~3.4]{RGGbook}) that $G_{K_5}$ is asymptotically Poisson distributed with parameter $\lim_{t \to \infty} \EE G_{K_5}=\lambda$. As a consequence, planarity is determined by the non-existence of a $K_5$-subgraph, whose probability converges to $\PP(\Po(\lambda)=0)=e^{-\lambda}$.
    Lastly, if $t(tr_t^d)^4\to \infty$, implying $\EE G_{K_5}\to \infty$, it can be shown using a variance argument (see \cite[Proposition~3.7]{RGGbook}) that $\PP(G_{K_5}\geq 1)\to 1$, implying non-planarity with high probability.
\end{proof}

\section{The soft random geometric graph}\label{sec:SRGG}
{In this section, we consider the soft random geometric graph with $\Pareto(r_t,\alpha)$ edge weights. More precisely, let $W_{y,z}$ be an i.i.d. edge marking of $\eta_t$ with tail distribution $\PP(W_{y,z}\geq s) = \min \{1,r_t^{\alpha} s^{-\alpha}\}$ for parameters $r_t,\alpha>0$. 
We write $\c G_t  = (\eta_t, \{(y,z)\in \eta_{t,\neq}^2\colon \|y-z\|\leq W_{y,z}\})$ for the resulting soft RGG.}

\subsection{Crossings in the SRGG}

\begin{theorem}[Crossings in the Pareto SRGG]\label{thm:SRGG crossings}
    Let $\c G_t$ denote the soft random geometric graph with $\Pareto(r_t,\alpha)$ edge weights, and $X_t(L)$ the number of crossings of $\c G_t$ when projected onto a plane $L$.
    Then, if $\alpha>d+1$,
    \begin{align*}
        \EE X_t(L)  & = t^4r_t^{2d+2} \frac{c_d}{8}\l( \frac{\alpha}{d+1-\alpha}\r)^2I_W(L)\l(1+ \c O_t\l(r_t^{\frac{\alpha -d-1}{\alpha - d}}\r)\r),\\
    \intertext{and if $\alpha=d+1$, }
        \EE X_t(L) & = t^4r_t^{2d+2} \l(\ln \l(\frac{1}{r_t}\r)\r)^2 \frac{c_d (d+1)^2}{8}I_W(L) \l(1+ \c O_t\l( \frac{\ln \l(\ln (1/r_t)\r)}{\ln ( 1/r_t)}\r)\r),
    \end{align*}
    with $c_d$ and $I_W(L)$ given in \eqref{eq:crossing constant}.
    Further, if $\alpha\geq d+1$ and $\lim_{t\to \infty} \EE X_t(L) = \lambda\in (0,\infty)$, then $\VV X_t(L) = \EE X_t(L) + o_t(1)$, and $X_t(L)$ tends in distribution to a Poisson distributed random variable with parameter $\lambda$. 
    
    If $\alpha < d+1$, then
    \[\EE X_t(L) = \Theta_t\l( t^4r_t^{2\alpha}\r),\]
    and if $tr_t^\alpha\to 0$ as $t\to \infty$, then $\VV X_t(L) = \EE X_t(L) + \Theta_t \l(t^6 r_t^{3\alpha} \r).$
\end{theorem}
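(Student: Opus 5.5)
We first compute the connection function: $g(x)=\PP(W_{y,z}\ge x)=\min\{1,r_t^\alpha x^{-\alpha}\}$. Then we reduce the expectation statements to Theorem~A (proved in \Cref{sec:crossings} as \Cref{thm:crossing expectation}). We evaluate
\[\int_0^{R} g(x)x^d\,\dd x = \frac{r_t^{d+1}}{d+1} + r_t^\alpha\int_{r_t}^{R} x^{d-\alpha}\,\dd x .\]
The argument then splits according to the value of $\alpha$.

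\emph{Case $\alpha>d+1$.} The integral converges at infinity and equals $r_t^{d+1}\big(\frac1{d+1}+\frac{1}{\alpha-d-1}\big)-\frac{r_t^\alpha R^{d+1-\alpha}}{\alpha-d-1}$. The leading term is $r_t^{d+1}\frac{\alpha}{(d+1)(\alpha-d-1)}$, which after squaring against the $(d+1)^2$ in Theorem~A gives the stated constant. The tail $\int_R^{c_2}$ is of order $r_t^\alpha R^{d+1-\alpha}$, so its relative size is $(r_t/R)^{\alpha-d-1}$. We balance this against the geometric error $\c O(R)$ by taking $R=R_t=r_t^{(\alpha-d-1)/(\alpha-d)}$, which makes both errors equal to $r_t^{(\alpha-d-1)/(\alpha-d)}$.

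\emph{Case $\alpha=d+1$.} The integral is $r_t^{d+1}\big(\frac1{d+1}+\ln(R/r_t)\big)$. We take $R_t=1/\ln(1/r_t)$. The main term is then $r_t^{d+1}\ln(1/r_t)\,(1+\c O(\ln\ln(1/r_t)/\ln(1/r_t)))$. The tail ratio is $\ln(c_2/R_t)/\ln(1/r_t)$, which is of the same order, and the geometric error $\c O(R_t)$ is smaller.

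\emph{Case $\alpha<d+1$.} The integral over $[0,c]$ is dominated by $r_t^\alpha c^{d+1-\alpha}$, and the two-sided bound of Theorem~A gives $\Theta(t^4r_t^{2\alpha})$ directly. No $R_t\to0$ exists here, which is why only the order is claimed.

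For the variance we write $X_t(L)$ as a $U$-statistic of order four over pairs of disjoint edges, $X_t=\frac18\sum h(y_1,y_2,y_3,y_4)$. Expanding $\EE X_t^2$ by the Mecke formula gives terms indexed by the number $k\in\{0,\dots,4\}$ of shared vertices between two crossing configurations. The $k=0$ term equals $(\EE X_t)^2$ exactly, since the Poisson process and the independent edge marks make disjoint configurations independent. The $k=4$ term with identical crossings gives $\EE X_t$. It remains to bound the terms $k=1,2,3$, and the $k=4$ terms with a different pairing.

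In the SRGG the edges are independent given the points. So each term is $t^{8-k}$ times an integral of a product of $g$'s over the union of the two 4-vertex configurations, constrained by the two crossing events. For $\alpha\ge d+1$ the edges have effective length $\lesssim r_t$ up to logarithms. Each shared-vertex configuration is then bounded by $t^{8-k}r_t^{(7-k)d+\dots}$ times the appropriate powers, which is $o(1)$ once $\EE X_t\to\lambda$, i.e.\ once $t^4r_t^{2d+2}$ (with the logarithm) is bounded. Poisson convergence then follows from the Poisson approximation for $U$-statistics of marked Poisson processes, as in \cite{DdJ25}: the mean and variance match asymptotically, and the mixed moment terms vanish by the same estimates.

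For $\alpha<d+1$ edges have macroscopic length with probability $\approx r_t^\alpha$. The dominant correction comes from two crossings sharing one edge, i.e.\ two shared vertices ($k=2$). This involves $6$ points and $3$ edges with no further geometric smallness, which gives $t^6r_t^{3\alpha}$. We show this term is both $\lesssim$ and $\gtrsim$ that order by restricting to configurations with edge lengths of order one. All other terms, including $k=1$ (order $t^7r_t^{4\alpha}$) and $k=3$ (order $t^5 r_t^{3\alpha}$), are of smaller order under $tr_t^\alpha\to0$, relative either to $t^6r_t^{3\alpha}$ or to $\EE X_t$.

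The main obstacle is this careful bookkeeping of the overlap integrals. In particular, we must check that the integrals over long edges, where $g$ has heavy tails, do not produce larger contributions than claimed in the $\alpha\ge d+1$ regime. I would handle this by splitting each edge length at $R_t$ and bounding each piece separately.
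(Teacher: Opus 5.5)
Your treatment of the expectation is correct and follows the paper's argument. You use the same split of $\int_0^{R}g(x)x^d\,\dd x$ at $r_t$ and the same balancing choice $R_t=r_t^{(\alpha-d-1)/(\alpha-d)}$. For $\alpha=d+1$, your $R_t=1/\ln(1/r_t)$ works as well as the paper's $\ln\ln(1/r_t)/\ln(1/r_t)$, since the truncation error $\ln\ln(1/r_t)/\ln(1/r_t)$ dominates either way.

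\textbf{The gap: the $k=3$ terms for $\alpha<d+1$.} You assert these are of order $t^5r_t^{3\alpha}$, but that only counts configurations in which all three edges are macroscopic. Edges of length at most $r_t$ are present with probability one and cost only geometric volume, and here they matter. Consider the $k=3$ configuration in which the shared edge $(v,v_2)$ is crossed by two edges $(v_3,v_4)$, $(v_3,w_4)$ with a common endpoint (Configuration 3.1). Take $\|v-v_2\|\asymp 1$ and the other two edges of length $\le r_t$. Then:
\begin{itemize}
\item the long edge costs $\asymp r_t^\alpha$;
\item $v_3|_L$ must lie in an $r_t$-strip around $[v,v_2]|_L$, which has volume $\asymp r_t$;
\item $v_4$ and $w_4$ each range over a fixed fraction of $B^d(v_3,r_t)$ on the far side of that line, of volume $\asymp r_t^d$ each.
\end{itemize}
So this term is $\gtrsim t^5r_t^{\alpha+2d+1}$. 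Its ratio to $t^6r_t^{3\alpha}$ is $t^{-1}r_t^{-(2\alpha-2d-1)}$, which tends to infinity whenever $\alpha>d+\frac12$ and $tr_t^{2\alpha-2d-1}\to0$. For example, take $d=2$, $\alpha=2.9$, $r_t=t^{-2}$; this regime satisfies $tr_t^\alpha\to0$.

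Every overlap term in $\VV X_t(L)-\EE X_t(L)$ is nonnegative. So in that regime no sharper estimate elsewhere can bring the total back to $\Theta_t(t^6r_t^{3\alpha})$. Your fallback that the remaining terms are ``negligible relative to $\EE X_t$'' does not help, because the claim is about $\VV X_t(L)-\EE X_t(L)$ itself.

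To close the argument you need two things:
\begin{itemize}
\item Integrate $G(\mathbf s)=\prod_i g(s_i)$ against the configuration densities of \Cref{sec:crossings variance} over mixed short/long edge lengths. This is also what gives the upper bound for the shared-edge term itself; restricting to order-one lengths only yields the lower bound.
\item Restrict the regime, e.g.\ to $\alpha\le d+\frac12$, or to $t^4r_t^{2\alpha}\gtrsim1$ as in the Remark. In the latter case $t^{-1}r_t^{2d+1-2\alpha}\lesssim t^{(3\alpha-4d-2)/\alpha}\to0$ because $d\ge2$.
\end{itemize}
The paper's own proof does not supply this comparison either. It only points to the configuration table and assumes $\lim_t\EE X_t(L)<\infty$ rather than $tr_t^\alpha\to0$. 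It also credits the $t^6r_t^{3\alpha}$ term to Configuration 3.1, which has only five points; the Remark, like you, correctly names Configuration 2.1.

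\textbf{A smaller point for $\alpha\ge d+1$.} It is not true that all edges are effectively of length $\lesssim r_t$. The doubly crossed edge in Configuration 2.1 carries weight $s_1^{d+1}$, so for $\alpha<d+2$ it is long, and that term is $\asymp t^6r_t^{\alpha+2d+2}$ (with logarithms at $\alpha=d+1$). This term is still $o_t(1)$ when $\EE X_t(L)\to\lambda$, so it does not obstruct the route via $\dTV(X_t(L),Z)\le16(\VV X_t(L)-\EE X_t(L))$ used in the paper. However, it is exactly the long-edge issue you flagged, and your stated heuristic does not handle it.
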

\begin{remark}
    The $\alpha$-threshold heuristically means that for $\alpha<d+1$, most edge crossings come from crossings between two short edges, and for $\alpha>d+1$, from ones between two long edges.
    In the next subsection (\Cref{thm:SRGGcliques}), we will see that a similar $\alpha$-transition happens for the edges, but at $\alpha = d$. This means that when $d < \alpha < d+1$, the graph itself is dominated by short edges, but the crossings by ones coming from long edges: that is, though there are significantly more short edges, they cannot overtake the long ones in regard to crossings.
\end{remark}
\begin{remark}
    If $\alpha<d+1$ and $\EE X_t(L)$ tends to a non-zero constant, then $\VV X_t(L)-\EE X_t(L)$ tends to a non-zero constant as well: this indicates that the number of crossings is not asymptotically Poisson distributed, unlike in the $\alpha\geq d+1$ case. Heuristically, the variance is larger because a single long edge can generate many crossings simultaneously; we will also be able to see this computationally, as the appearance of Configuration 2.1 of \Cref{fig:variance} yields the dominating expression (as long as $tr_t^\alpha \to 0$).    
    Note also that precise variance asymptotics can be derived following \Cref{sec:crossings variance}; we omit stating this explicitly, as there are many distinct cases (depending on both $\alpha$ and $r_t$). 
\end{remark}
\begin{proof}
    We find the expected number of crossings using Theorem~\ref{thm:A}. Since the probability that two points at distance $x>0$ are connected by an edge is $g(x) = \min\{1,r_t^\alpha x^{-\alpha}\}$, we must consider the integral
    \[\int_0^{R_t} g(x) x^{d} \dd x = \int_0^{r_t} x^{d} \dd x +  r_t^\alpha\int_{r_t}^{R_t} x^{d+1-\alpha-1} \dd x \]
    with some $c_2>R_t\geq r_t$.

    The first integral is $r_t^{d+1}/ (d+1)$ independently of the value of $\alpha$. The second integral depends on the sign of $d+1-\alpha$: in particular, for $\alpha > d+1$, the integral is dominated by the lower endpoint $r_t$ of the interval, and for $\alpha<d+1$, by the upper endpoint $R_t$.
    For $\alpha<d+1$, it follows that no $R_t\to 0$ exists such that the main contribution comes from the interval $[0,R_t]$. If $R_t=c$ for any constant $c>0$, the integral is $\Theta_t(r_t^{\alpha})$, and as a consequence, $\EE X_t(L) = \Theta_t (t^4 (r_t^\alpha)^2$) follows from Theorem~\ref{thm:A}.

    For $\alpha > d+1$, the integral over $[0,c_2]$ is dominated by the integral over $[0,R_t]$ 
    for any $R_t$ with $R_t/r_t\to \infty$.
    In particular, for any such $R_t$,
    \[r_t^{\alpha}\int_{r_t}^{R_t} x^{d+1-\alpha-1}\dd x = r_t^{\alpha} \frac{\l[R_t^{d+1-\alpha} - r_t^{d+1-\alpha} \r]}{d+1-\alpha} \sim \frac{r_t^{d+1}}{\alpha-d-1},\]
    since $d+1-\alpha<0$; recall that the $\sim$ symbol means that the ratio of the quantities converges to $1$ as $t\to \infty$.
    Together with the integral over $[0,r_t]$, this gives the first order term of the expectation. 
    Consider now the error term.
    We aim to optimize the two different types of errors. 
    Some error comes from the omitted part of the integral, corresponding to the negligibility of long edges: this is of the order $(R_t/r_t)^{d+1-\alpha}$.
    In addition, some error comes from approximations in the proof of Theorem~\ref{thm:A}, which has order $R_t$.
    The overall error term is optimal if these two expressions are equal, which happens exactly for $R_t= r_t^{\frac{\alpha-d-1}{\alpha-d}}$. This expression satisfies both $R_t\to 0$ and $R_t/r_t\to \infty$, and hence indeed gives the error term as stated in the theorem.

    For $\alpha=d+1$, 
    \[r_t^\alpha \int_{r_t}^{R_t} x^{-1}\dd x = r_t^\alpha \big (\ln (R_t) - \ln (r_t) \big)
    \sim r_t^\alpha \ln \l(\frac{1}{r_t}\r)\]
    whenever $\ln(R_t)/\ln(r_t)\to 0$. 
    Setting $R_t = \frac{\ln(\ln(1/r_t))}{\ln(1/r_t)}$, this holds, and the first order term of the expectation follows.
    The error term coming from the omitted integral is $\ln(R_t)/\ln(r_t)\sim \ln (\ln(1/r_t))/\ln(1/r_t) =R_t$, yielding the overall error term.

    For the variance, a complete description is given in \Cref{sec:crossings variance}. With the notation used therein, the independence of the edge markings implies that the connection probability is $G(\mathbf s)=\prod_{i=1}^m (r_t^\alpha s_i^{-\alpha})$ for all configurations arising from the application of the Mecke formula. The resulting integrals are of power functions, whose behaviour is determined by $\alpha$, and in particular changes in behaviour occur at $\alpha = d$, $\alpha = d+1$ and $\alpha = d+2$.
    If $\lim_{t\to \infty} \EE X_t(L)<\infty$, then $\VV X_t(L)-\EE X_t(L)$ is dominated by Configuration 3.1 if $\alpha<d+1$, and Configuration 2.1 or 3.1 otherwise. 
    In the former case, Configuration 3.1 yields the order $\Theta_t(t^6 r_t^{3\alpha}) = \Theta_t((t^4 r_t^{2\alpha})^{3/2})$ stated in the theorem.
    For the latter, different dominating expressions arise (depending on $\alpha$ and $r_t)$, but we always have $\VV X_t(L)-\EE X_t(L)= o_t(1)$.

    To prove the convergence to a Poisson distributed random variable in the case when $\alpha\geq d+1$ and $\EE X_t(L)\to \lambda \in (0,\infty)$, we use the Poisson approximation result \cite[Theorem~3.8]{Pianoforte} (see in particular (3.27)). 
    Though they only state the bounds for Poisson point processes (without edge markings), the proof is easily adapted to our setting due to the markings being i.i.d.
    By this result, if $Z$ is a Poisson distributed random variable with parameter $\EE X_t(L)$, then the total variation distance $\dTV$ is bounded by
        \begin{equation*}
            \dTV(X_t(L),Z)
            \leq 16 (\VV X_t(L)-\EE X_t(L)).
        \end{equation*}
    This gives the statement together with the variance bounds, and in particular a rate for the convergence can also be given.
\end{proof}

\subsection{Complete graphs in the SRGG}

For our second main question, planarity, we first aim to understand the number of $K_5$ subgraphs in $\c G_t$.
Therefore in this section, we consider complete graphs generally.

\begin{theorem}[Complete subgraphs in the Pareto SRGG]\label{thm:SRGGcliques}
Let $G_{K_n}$ denote the number of complete subgraphs on $n$ vertices in the soft random geometric graph with $\Pareto(r_t,\alpha)$ edge weights. 
Then
\begin{align*}
    \EE G_{K_n} & = 
    \begin{cases}
        \Theta_t \l(t^n r_t^{d(n-1)}\r), & \text{ if } \frac{\alpha}{d} > \frac 2n,\\
        \Theta_t  \l(t^n r_t^{d(n-1)} \ln \l(\frac{1}{r_t}\r)\r), & \text{ if } \frac{\alpha}{d} =  \frac 2n,\\
        \Theta_t \l( t^n r_t^{\alpha n(n-1)/2}\r),& \text{ if } \frac{\alpha}{d}< \frac 2n.
    \end{cases}
    \intertext{Further,}
    \VV G_{K_n} & =  \EE G_{K_n} + \sum_{m=1}^{n-1} \Theta_t  (V_{n,m}(t))
    \intertext{with}
    V_{n,m}(t) & =  
    \begin{cases}
        t^{2n-m} r_t^{da_{n,m}}, & \text{ if } \frac{\alpha}{d} > \frac{a_{n,m}}{b_{n,m}},\\
        t^{2n-m}  r_t^{da_{n,m}}\ln \l(\frac{1}{r_t}\r), & \text{ if } \frac{\alpha}{d} = \frac{a_{n,m}}{b_{n,m}},\\
        t^{2n-m} r_t^{\alpha b_{n,m}}, & \text{ if } \frac{\alpha}{d} < \frac{a_{n,m}}{b_{n,m}},
    \end{cases}
\end{align*}
    where $a_{n,m}=2n-m-1$ and $b_{n,m}= n(n-1)-\frac 12 m(m-1)$.
    
    If $\lim_{t\to \infty} \EE G_{K_n} =\lambda<\infty$, then $\VV G_{K_n} - \EE G_{K_n} = \Theta_t (V_{n,n-1}(t))$. 
    In addition, if $\lambda>0$, and $Z$ is a Poisson distributed random variable with parameter $\EE G_{K_n}$, then the total variation distance is bounded by
    \[\dTV (G_{K_n}, Z)  = \Theta_t (V_{n,n-1}(t)) = o_t(1),\]
    and in particular, $G_{K_n}$ converges in distribution to a $\Po(\lambda)$ random variable as $t\to \infty$.
\end{theorem}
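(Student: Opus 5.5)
We would start from the Mecke formula. Writing $G_{K_n}$ as a sum over $n$-subsets of $\eta_t$, and using that in the SRGG the edges exist independently given the positions, we get
\[\EE G_{K_n} = \frac{t^n}{n!}\int_{W^n} \prod_{1\le i<j\le n} g(\|y_i-y_j\|)\,\dd y_1\ldots \dd y_n, \qquad g(x)=\min\{1,r_t^\alpha x^{-\alpha}\}.\]
The task is then to find the order of this integral. Fix $y_1$. The diameter $D$ of the configuration satisfies $\max_{i<j}\|y_i-y_j\|\approx D$. The product of the $\binom n2$ connection probabilities lies between two natural bounds: it is at least $g(D)^{n(n-1)/2}$, and it is at most $g(D)$ times a product of further factors. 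The cleanest route is an induction over the scale of the configuration, following the idea advertised in the introduction: restrict all pairwise distances to lie in a dyadic shell $[2^{k}r_t,2^{k+1}r_t]$ and sum over $k$.

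Inside the shell with $D\approx s\ge r_t$, the remaining $n-1$ points range over a volume of order $s^{d(n-1)}$, and every pair has $g\approx (r_t/s)^\alpha$, provided all pairwise distances are comparable to $s$. That proviso is the crux, and I return to it below. Granting it, the contribution is $\approx t^n s^{d(n-1)}(r_t/s)^{\alpha n(n-1)/2}$, summed over dyadic $s\in[r_t,1]$. The exponent of $s$ is $d(n-1)-\alpha n(n-1)/2$, which changes sign exactly at $\alpha/d=2/n$. This gives three cases:
\begin{itemize}
\item for $\alpha/d>2/n$ the sum is dominated by $s\approx r_t$ and equals $t^nr_t^{d(n-1)}$;
\item at equality each of the $\approx\ln(1/r_t)$ scales contributes equally;
\item below $2/n$ the sum is dominated by $s\approx1$ and gives $t^nr_t^{\alpha n(n-1)/2}$.
\end{itemize}
The matching lower bound comes from explicitly restricting to configurations where all distances are in $[s,2s]$.

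The main obstacle is the upper bound for configurations with mixed scales, for instance a tight cluster together with a far point. For these I would prove the general bound
\[\int \prod_{i<j} g \lesssim \max_{\text{scales}}(\cdots)\]
by induction on $n$. Remove the vertex $y_n$ and integrate it out against $\prod_{i<n} g(\|y_n-y_i\|)$. Bounding this by $\min_i$ over the factors reduces the step to a one-point integral $\int g(\|y_n-y_1\|)^{n-1}\dd y_n$, or more carefully to a recursion in a scale parameter $u$ equal to the maximal edge length. Defining $F_n(u)$ as the integral restricted to edge lengths at most $u$, we obtain $F_n(u)\lesssim\int_0^u F_{n-1}(x)\,g(x)^{n-1}x^{d-1}\dd x$ up to constants. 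The exponents then have to be checked to be consistent with the three regimes. Mixed configurations turn out to be never worse than the extreme ones, because the exponent function is linear in the log-scales, so its maximum is attained at the vertices.

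For the variance, we would expand $G_{K_n}^2$ over pairs of $n$-sets sharing $m$ points, with $m=0,\ldots,n$:
\begin{itemize}
\item $m=n$ gives $\EE G_{K_n}$;
\item $m=0$ cancels with $(\EE G_{K_n})^2$ by the Mecke formula and independence of disjoint edges;
\item $1\le m\le n-1$ gives an integral over $2n-m$ points of a product over the union of two cliques.
\end{itemize}
The union of two cliques has $b_{n,m}=n(n-1)-m(m-1)/2$ edges, and the positions contribute $t^{2n-m}$ together with $a_{n,m}=2n-m-1$ free points. The same scale analysis as for the expectation gives $V_{n,m}$, with threshold $\alpha/d=a_{n,m}/b_{n,m}$, and is again sharp by restricting to equal-scale configurations.

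When $\EE G_{K_n}\to\lambda<\infty$, we would compare each $V_{n,m}$ with $V_{n,n-1}$ using the constraint on $t$ and $r_t$ imposed by $\EE G_{K_n}=\Theta(1)$, and show that $m=n-1$ dominates and is $o_t(1)$. For the total variation statement, the bound $\dTV\le 16(\VV G_{K_n}-\EE G_{K_n})$ from \cite[Theorem~3.8]{Pianoforte}, adapted to i.i.d. edge marks as in the proof of \Cref{thm:SRGG crossings}, gives the upper bound and hence Poisson convergence. The matching lower bound of order $V_{n,n-1}$ requires a separate argument, for example a second-factorial-moment comparison, and I expect it to be a secondary difficulty.
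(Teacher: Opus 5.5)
Your overall structure matches the paper's: the Mecke formula, a recursion in the maximal edge length, three regimes governed by the sign of $d(n-1)-\alpha n(n-1)/2$, and the total variation bound used in the proof of \Cref{thm:SRGG crossings}. The recursion you finally write down is essentially the paper's \eqref{eq:SRGG clique recurrence}. However, its upper half is only asserted, and the derivation you sketch does not give it. So the upper bound on $\EE G_{K_n}$ has a genuine gap, exactly at the crux you identify.

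Your $\min_i$ step bounds the $n-1$ new factors $g(\|y_n-y_i\|)$ through a single distance, leading to $\int g(\|y_n-y_1\|)^{n-1}\,\dd y_n$. This decouples $y_n$ from the scale of the other points and yields only the product bound $F_n(1)\lesssim F_{n-1}(1)\int_0^1 g(x)^{n-1}x^{d-1}\,\dd x$, which is not sharp. For $n=3$ and $\alpha=0.7d$ it gives $t^3r_t^{\alpha+d}=t^3r_t^{1.7d}$ instead of $t^3r_t^{2d}$. The reason is that it ignores that a $y_3$ sitting next to $y_1$ must then be far from a distant $y_2$.

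To get the convolution form, you must peel a vertex at scale $x$ such that (a) all its edges have length $\gtrsim x$ and (b) the remaining points have diameter $\lesssim x$. In mixed configurations these conflict, since the point farthest from $y_1$ may have a very close neighbour.

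The missing idea is the paper's reflection step. Set $y_1=\mathbf 0$, let $y_{n+1}$ be the point farthest from it, and put $x=\|y_{n+1}\|$. Reflect every $y_i$ that is closer to $y_{n+1}$ than to $y_1$ across the bisecting hyperplane $H$ of $y_1$ and $y_{n+1}$. This reflection:
\begin{itemize}
\item swaps $\|y_i\|$ and $\|y_i-y_{n+1}\|$, so it preserves $g(\|y_i\|)g(\|y_i-y_{n+1}\|)$;
\item never lengthens a distance among $y_2,\dots,y_n$, so it can only increase the remaining integrand, $g$ being decreasing;
\item is at most $2^{n-1}$-to-one.
\end{itemize}
Afterwards $\|y_i-y_{n+1}\|\geq x/2$ for every $i$. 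This gives (a) and (b) at once, and hence $I_{n+1,s}\lesssim\int_0^{s}g(x)^nI_{n,2x}x^{d-1}\,\dd x$.

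Your fallback, that the exponent is linear in the log-scales so its extremum sits at a vertex, does not close this gap by itself. You would first need a rigorous parametrisation of configurations by scales with the correct volume factors; single-linkage merge trees would do. Even then, the vertices are themselves mixed configurations: a cluster of $j+1$ points at scale $r_t$ plus $n-1-j$ points spread at scale $1$, of order $r_t^{dj+\alpha(\binom n2-\binom{j+1}2)}$. So you still need concavity in $j$ to reduce to $j\in\{0,n-1\}$.

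Also, the shells ``all pairwise distances in $[s,2s]$'' do not cover the configuration space, and they are even empty once $n$ exceeds a packing constant of $\R^d$. Decompose by diameter instead, e.g.\ $\|y_2-y_1\|\in[s,2s]$ with all points in $y_1+2sB^d$. This also repairs your lower bound in the logarithmic case.

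The variance inherits the same gap, and there the union of two cliques is not complete, so mixed configurations need their own control. The paper uses the analogous recursion from $(n,m)$ to $(n+1,m+1)$ on shared vertices, starting from $I_{n,1,s}=I_{n,s}^2$.

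Finally, the Poisson approximation only gives $\dTV\lesssim \VV G_{K_n}-\EE G_{K_n}$. The paper's proof likewise establishes only this direction, so your remark about the missing lower bound is not a deviation from it.
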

\begin{remark}
    Depending on $\alpha$, the correct case for $V_{n,m}(t)$ (determined by the sign of $\alpha/d- a_{n,m}/b_{n,m}$) might be different for different values $m$ (for $n$ fixed). 
    Further, which expression dominates across the values of $m$ heavily depends on the asymptotic behaviour of $r_t$ as well.
\end{remark}

\begin{proof}
Let $n\geq 1$ be arbitrary. 
Write $h(y_1,\ldots, y_n)\coloneq \prod_{1\leq i<j\leq n} \ind (y_i\con y_j)$ for the indicator of the event that the points $y_1,\ldots, y_n\in \R^d$ form a complete subgraph in the generated soft random geometric graph.
More precisely, assign an independent, $\textnormal{Pareto}(r_t,\alpha)$ distributed random variable $W_{y_i,y_j}$ to each pair $\{y_i,y_j\}$, and let
$h(y_1,\ldots, y_n) = \prod_{1\leq i < j \leq n} \ind (\|y_i-y_j\| \leq W_{y_i,y_j})$.
We can then write $G_{K_n}$ as a Poisson $U$-statistic, namely
\[G_{K_n} = \frac{1}{n!}\sum_{(y_1,\ldots, y_n)\in \eta_{t,\neq}^n} h(y_1,\ldots,y_n).\]

\paragraph{The expectation: recurrence.} 
By the Mecke formula, 
\begin{align*}
    \EE G_{K_n} 
    & = \frac{t^n}{n!} \int_{W^n} \PP(h(y_1,\ldots, y_n)=1)\dd y_1\ldots \dd y_n \\
    & = \frac{t^n}{n!} \int_W \l[\int_{(W-v)^{n-1}}\PP(h(\mathbf 0,y_2,\ldots, y_n)=1)\dd y_2\ldots \dd y_n\r] \dd v
\end{align*}
with $\mathbf 0 = (0,\ldots, 0)\in \R^d$, where we use that the connection probabilities are translation invariant. Observe also that since the edge weights are independent, the connection probability factors, and in particular $\PP(h(y_1,\ldots, y_n)=1)=\prod_{1\leq i < j\leq n} g(\|y_i-y_j\|)$, where $g(s)\coloneq \PP(W_{y,z}\geq s) = \min\{1, r_t^{\alpha}s^{-\alpha}\}$ for arbitrary $s>0$.

We evaluate the expectation using induction on $n$, by understanding the arising integral in a more granular way by restricting the maximal edge lengths allowed in the construction. In particular, set $y_1\coloneq \mathbf 0\in \R^d$, and for arbitrary $s>0$, write
\[I_{n,s}\coloneq \int_{(s B^d)^{n-1}} \PP(h(y_1,\ldots, y_n)=1) \ind (\|y_i-y_j\|\leq s  \ \forall i,j\in\{1,\ldots, n\})\dd y_2\cdots \dd y_n\]
for $n\geq 2$, and set $I_{1,s}=1$.
Note that though it is not indicated, the connection probability, and hence $I_{n,s}$, is a function of $t$.
This expression is then closely related to $\EE G_{K_n}$: there exist constants $0<\gamma_1 < \gamma_2 <\infty$ such that
\begin{equation}\label{eq:ball sandwich}
    \frac{\gamma_1^d\kappa_d}{n!} t^n I_{n,\gamma_1}\leq \EE G_{K_n} \leq  \frac{\gamma_2^d\kappa_d}{n!}t^n I_{n,\gamma_2}.
\end{equation}
The upper bound can be seen with $\gamma_2\coloneq \diam (W)$ by expanding the points of the inner integral to $\diam (W)B^d\supset W-v$, and using that $\Vol(W)\leq \Vol(\diam (W)B^d) = \gamma_2^d \kappa_d$. 
On the other hand, since $W$ has non-empty interior, there exists $\gamma_1$ such that $W$ contains a ball of radius $2\gamma_1$, i.e. there is $z\in W$ with $z+2\gamma_1B^d\subset W$. Restricting $v$ to $z+\gamma_1B^d$, and the points of the inner integral to $\gamma_1B^d\subset W-v$ yields the lower bound.

In what follows, we aim determine the asymptotic behaviour of $I_{n,s_t}$ as $t \to \infty$ for all $(s_t)_{t>0}$ with $0<s_t<\gamma_2$. For simplicity, we assume that $s_t$ is monotone decreasing; importantly, $s_t\to 0$ as $t\to \infty$ is allowed. 
We will in particular show that $I_{n,c_1} = \Theta(I_{n,c_2})$ as $t\to \infty$ for arbitrary constants $c_1,c_2>0$, yielding that $\EE G_{K_n} = \Theta_t  (t^n I_{n,1})$.

To find the correct asymptotic, we first show that the recurrence
\begin{equation}\label{eq:SRGG clique recurrence}
\int_0^{s_t} (g(x))^{n} I_{n,x}x^{d-1} \dd x \lesssim I_{n+1,s_t}  \lesssim \int_0^{s_t} (g(x))^{n} I_{n,2x}x^{d-1} \dd x
\end{equation}
holds for all $n\geq 1$.
The idea is the following.
In the definition of $I_{n+1,s_t}$, for every fixed $\{y_2,\ldots, y_{n+1}\}$, we can achieve through a permutation of the indices that every edge has length at most $2\|y_{n+1}\|$, that is, $\|y_i-y_j\|\leq 2\|y_{n+1}\|$ for all $i,j=1,\ldots, n+1$; in particular we have $\|y_i\|\leq 2 \|y_{n+1}\|$ for all $i=2,\ldots, n$. 
The existence of such a permutation is guaranteed by the triangle inequality (e.g. if $\|y_{n+1}\|$ is the longest edge), but it is generally not unique, and the number of such permutations may vary. However, this changes the integral by at most a constant factor, giving matching upper- and lower bounds. As a consequence,
\begin{multline*}
    I_{n+1,s_t} \approx \int_{s_t B^d}\bigg[\int_{(2\|y_{n+1}\| B^d)^{n-1}}  \PP(h(y_1,\ldots,y_n)=1) \prod_{i=1}^{n} g(\|y_i-y_{n+1}\|)\\ 
    \times \ind (\|y_i-y_j\|\leq 2\|y_{n+1}\|  \ \forall i,j\in\{1,\ldots, n+1\})\dd y_2\cdots \dd y_{n}\bigg] \dd y_{n+1}.
\end{multline*}
By symmetry, the inner $(n-1)$-fold integral depends only on $\|y_{n+1}\|$. By polar transformation on $y_{n+1}$, this gives
\begin{align}
\begin{split}\label{eq:clique induction}
    I_{n+1,s_t} \approx \int_0^{s_t} \bigg[\int_{(2x B^d)^{n-1}} & \PP(h(y_1,\ldots,y_n)=1)  \prod_{i=1}^{n} g(\|y_i-y_{n+1}(x)\|) \\ 
    &\times  \ind (\|y_i-y_j\|\leq 2x  \ \forall i,j\in\{1,\ldots, n+1\})\dd y_2\cdots \dd y_{n}\bigg]  x^{d-1} \dd x,
\end{split}
\end{align}
where we now set $y_{n+1} = y_{n+1}(x)= (x,0,\ldots, 0)\in \R^d$ in the inner integral.

First, consider a lower bound.
If $y_i \in xB^d$, then $\|y_i-y_{n+1}\|\leq 2x$ is automatically satisfied, hence restricting all points $y_2,\ldots, y_n$ to $xB^d$ allows us to drop the condition on those distances. Further, by the monotonicity of the connection function, $g(\|y_i-y_{n+1}\|) \geq g(2x) = \Theta_t (g(x))$ for all $i=1,\ldots, n$, giving the lower bound
\begin{align*}
    I_{n+1,s_t} & \gtrsim \int_0^{s_t} (g(x))^{n}
    \bigg[ \int_{(x B^d)^{n-1}} 
        \begin{multlined}[t]\
        \PP(h(y_1,\ldots,y_n)=1) \\ \times  \ind (\|y_i-y_j\|\leq x  \ \forall i,j\in\{1,\ldots, n\})\dd y_2\cdots \dd y_{n}\bigg]  x^{d-1} \dd x
        \end{multlined}\\ 
        & = \int_0^{s_t} (g(x))^{n} I_{n,x}x^{d-1} \dd x.
\end{align*}

Secondly, we obtain an essentially matching upper bound. 
By the distance assumptions, $y_2, \ldots, y_{n}\in (2x B^d)\cap (y_{n+1}+2xB^d)\eqcolon A_{x}$, which is a convex body symmetric about the bisector hyperplane $H$ of $y_1=\mathbf 0$ and $y_{n+1}$. Write $A_x^-$ for the points of $A_x$ that are closer to $y_1$ than to $y_{n+1}$.
For any fixed position of $y_2,\ldots, y_n$, reflecting an individual point $y_i$ about $H$ does not change the (joint) probability of being connected to $y_1$ and $y_{n+1}$. However, the probability that $\{y_2,\ldots, y_{n}\}$ forms a complete subgraph is maximal exactly when they are all on the same side of $H$; see \Cref{fig:SRGG clique}. 
In other words, for any configuration of points, replacing the ones closer to $y_{n+1}$ with their reflected pair increases the overall connection probability. 
\begin{figure}[ht]
    \centering
    \begin{tikzpicture}[scale=0.8]
        \coordinate (x) at (0,0);
        \draw [fill=black] (x) circle (2pt);
        \node at (x) [left = 2pt] {$y_1$};
        
        \coordinate (xnn) at (6,0);
        \draw [fill=black] (xnn) circle (2pt);
        \node at (xnn) [right =2pt] {$y_{n+1}$};

        \draw[line width = 1pt](x)--(xnn);
        \draw[line width = 1pt, dashed] (3,-1.5)--(3,3.5);
        \node at (3, -1) [right] {$H$}; 

        \coordinate (y2) at (5,1.5);
        \draw [fill=black] (y2) circle (2pt);
        \node at (y2) [above =2pt] {$y_{2}$};

        \coordinate (y2v) at (1,1.5);
        \draw [fill=black] (y2v) circle (2pt);
        \node at (y2v) [above =2pt] {$y_{2}'$};
        
        \coordinate (y3) at (2.5,2.5);
        \draw [fill=black] (y3) circle (2pt);
        \node at (y3) [above =2pt] {$y_{3}$};

        \draw[line width = 0.5pt, dotted] (x)--(y2)--(xnn);
        
        \draw[line width = 0.5pt, dotted] (x)--(y2v)--(xnn);
        
        \draw[line width = 0.5pt, dashed] (y2)--(y3)--(y2v);
    \end{tikzpicture}
    
    \caption{For fixed $y_1,y_3, y_{n+1}$, choosing $y_2'$ instead of $y_2$ increases the overall connection probability.}
    \label{fig:SRGG clique}
\end{figure}

The total inner integral in \eqref{eq:clique induction} can then be upper bounded by a constant times the integral over $A_x^-$. 
Importantly, for $y_i\in A_x^-$, we have $\|y_1-y_i\|\leq \|y_{n+1}-y_i\|$ by definition of the set, which implies by the triangle inequality that $\|y_{n+1}-y_i\|\geq x/2$, and consequently $g(\|y_{n+1}-y_i\|)\lesssim g(x)$.
This leads to the upper bound
\begin{align*}
    I_{n+1,s_t} 
    &\lesssim \int_0^{s_t} (g(x))^n \bigg[\int_{(A_{x}^-)^{n-1}} 
    \begin{multlined}[t]
    \PP(h(y_1,\ldots,y_n)=1) \\ \times  \ind (\|y_i-y_j\|\leq 2x  \ \forall i,j\in\{2,\ldots, n\})\dd y_2\cdots \dd y_{n}\bigg]  x^{d-1} \dd x.
    \end{multlined}
\end{align*}
The inner integral is upper bounded by $I_{n,2x}$ by extending the domain of integration to $(2xB^d)^{n-1}$.

\paragraph{The expectation: explicit computation.} 


We now give a short overview of how the recurrence above leads to the asymptotic behaviour of $I_{n,s_t}$: though the integrals are elementary (power functions and logarithms), the parameters and case distinctions require some attention.

Our claim is the following.
If $s_t\leq 2r_t$ for $t$ sufficiently large, then $I_{n,s_t} \approx s_t^{d(n-1)}$, while if $s_t>2 r_t$,
\begin{equation*}
I_{n,s_t} \approx \begin{cases}
            r_t^{d(n-1)}, & \text{ if } \frac{\alpha}{d} > \frac 2n,\\
            r_t^{d(n-1)} \ln \l(\frac{s_t}{r_t}\r), & \text{ if }  \frac{\alpha}{d} = \frac 2n,\\
            r_t^{\alpha n(n-1)/2} s_t^{d(n-1)-\alpha n(n-1)/2},& \text{ if }  \frac{\alpha}{d} < \frac 2n
        \end{cases}
\end{equation*}
holds for all for $n\geq 2$; recall that we have $I_{1,s_t}=1$ by definition.
Observe that changing the radius $s_t$ by a constant factor only changes the integral by a constant factor. This yields that the lower- and upper bounds in \eqref{eq:SRGG clique recurrence} match; additionally, setting $s_t=1$, the statement of the theorem follows with \eqref{eq:ball sandwich}.

We show the asymptotic behaviour using the recurrence \eqref{eq:SRGG clique recurrence} from $n$ to $n+1$ when $n\geq 2$. We omit the base case $I_{2,s_t}$, as it follows from $I_{1,s_t}=1$ using the same tools presented below for the $n\geq 2$ case.

Let us now begin the explicit computations.
For $s_t\leq 2r_t$, the statement easily follows from the recurrence: since $g(x)\approx 1$ for all $x\leq 2r_t$, we have $I_{n+1,s_t} \approx \int_0^{s_t} s_t^{d(n-1)} x^{d-1} \dd x \approx s_t^{dn}$.

For the rest of the computation, assume that $s_t > 2 r_t$ for large enough $t$. 
Split the integral in \eqref{eq:SRGG clique recurrence} into two parts at $x = r_t$, call them $I_{n+1,s_t}'$ and $I_{n+1,s_t}''$. 
The first part does not depend on $\alpha$: using the induction hypothesis,
\[I_{n+1,s_t}'\approx \int_{0}^{r_t} x^{d(n-1)}x^{d-1}\dd x = \int_0^{2r_t} x^{dn-1}\dd x \approx r_t^{dn}.\]
If $\alpha> \frac 2n d > \frac{2}{n+1} d$, then $d-n\alpha < -d < 0$, and the contribution coming from the lower endpoint of the interval dominates the expression:
\[I_{n+1,s_t}'' 
    \approx \int_{r_t}^{s_t}  r_t^{n\alpha} x^{-n\alpha} \cdot r_t^{d(n-1)}\cdot x^{d-1}\dd x
    \lesssim r_t^{n\alpha + d(n-1)} r_t^{d-n\alpha} = r_t^{dn}.\]

If $\alpha<\frac 2n d$, 
\begin{align*}
    I_{n+1,s_t}'' 
    & \approx \int_{r_t}^{s_t} r_t^{n\alpha} x^{-n\alpha} \cdot r_t^{\alpha n(n-1)/2} x^{d(n-1)-\alpha n(n-1)/2} \cdot x^{d-1} \dd x \\
    & = r_t^{\alpha n(n+1)/2} \int_{r_t}^{s_t} x^{dn-\alpha n(n+1)/2-1}\dd x.
\end{align*}
The behaviour of this expression is determined by the sign of $dn-\alpha n (n+1)/2$.
For negative sign i.e. $\alpha$ larger than the threshold, the main contribution of the integral comes from the lower endpoint $r_t$, the $r_t^\alpha$ terms cancel out, and only $r_t^{dn}$ remains.
For positive sign i.e. $\alpha$ smaller, the upper endpoint dominates, and we get the $s_t$-factor as the statement requires.
In the $\alpha=\frac{2}{n+1} d$ case, we integrate $x^{-1}$ and get an additional logarithmic factor $\ln(s_t/r_t)$. 
In total, we thus get the upper bound $r_t^{dn}(1+\ln(s_t/r_t))\approx r_t^{dn} \ln(s_t/r_t)$, since $\ln(s_t/r_t)\gtrsim 1$ due to the assumption $s_t> 2r_t$.

Lastly, let us consider the case when $\alpha = \frac 2n d$; split the recurrence integral at $x=2r_t$ for simplicity. 
We then have an additional $\ln(x/r_t)$ term compared to the $\alpha>\frac 2n d$ case, in particular
\[I_{n+1,s_t}'' \approx r_t^{n\alpha + d(n-1)}\int_{2r_t}^{s_t} x^{d-n\alpha-1} \ln \l(\frac{x}{r_t}\r)\dd x.\]
Since we have $d-n\alpha=-d<0$, evaluating this integral gives
\begin{align*}
    I_{n+1,s_t}''
    & \approx r_t^{n\alpha+d(n-1)} \l[ x^{d-n\alpha}\l(1- \ln \l(\frac{x}{r_t}\r)\r)\r]_{2r_t}^{s_t} \\
    & \approx r_t^{dn} \l[1- \l(\frac{s_t}{r_t}\r)^{d-n\alpha} + \l(\frac{s_t}{r_t}\r)^{d-n\alpha} \ln \l(\frac{s_t}{r_t}\r)\r].
\end{align*}
Though the logarithmic factor $\ln (s_t/r_t)$ can tend to infinity as $t\to \infty$, the entire term will converge to a constant due to the exponent in the accompanying power function being negative. Consequently, the entire expression in parentheses is simply $\Theta(1)$, and $I_{n+1,s_t}''\approx r_t^{dn}$. 
Simply put, the logarithmic term that appears for $I_{n,s_t}$ if $\alpha=\frac 2n d$, becomes negligible in the induction for $I_{n+1,s_t}$.

As a final remark regarding the computation, observe how the exponents change through the induction: in each step, the exponent of $r_t^d$ increases by one (this comes from the factor $x^{d}$ in the recurrence), and the exponent of $r_t^\alpha$ by $n$ (coming from the $(g(x))^{n}$ factor in the integral). This is what essentially gives the general exponent for $I_{n,s_t}$ for any $n\geq 2$: $\sum_{i=2}^n 1 = n-1$ for $r_t^d$, and $\sum_{i=2}^n (i-1) = \sum_{i=1}^{n-1} i = n(n-1)/2$ for $r_t^\alpha$. This observation helps with the variance computation that follows below.

\paragraph{The variance and Poisson limit theorem.}  
By definition,
\[\EE G_{K_n}^2 = \frac{1}{(n!)^2} \sum_{(z_1,\ldots, z_n)\in \eta_{t,\neq}^n} \sum_{(z_1',\ldots, z_n')\in \eta_{t,\neq}^n} h(z_1,\ldots, z_n) h(z_1',\ldots, z_n').\]
Now, some of the $z_i$ and $z_j'$ might coincide.
By symmetry, only the number $m$ of points in $\{z_1,\ldots, z_n\}\cap \{z_1',\ldots, z_n'\}$ matters; in particular, with $m\in \{0,\ldots, n\}$ given, we may assume $z_{n-m+i} = z_{n-m+i}' \eqcolon y_i$ for $i=1,\ldots, m$. 
For $m=n$ (the two tuples coincide), the formula corresponds to the expectation, and for $m=0$ (all points distinct), to the expectation squared.
Hence
\[\VV G_{K_n}  = \EE G_{K_n} + \sum_{m=1}^{n-1} c_{n,m} \EE \sum_{\substack{(y_1,\ldots, y_m,\\z_1,\ldots, z_{n-m},\\z_1',\ldots, z_{n-m}')\\\in \eta_{t,\neq}^{2n-m}}} h(y_1,\ldots, y_m, z_1,\ldots, z_{n-m}) h(y_1,\ldots, y_m, z_1',\ldots, z_{n-m}'),\]
where $c_{n,m}>0$ is a combinatorial constant. We can then apply the Mecke formula to each summand, and get
\[\VV G_{K_n} =\EE G_{K_n} + \sum_{m=1}^{n-1} c_{n,m} t^{2n-m} \int_{W^{2n-m}} p_{n,m} \dd y_1 \ldots \dd y_{m}\dd z_1\ldots \dd z_{n-m} \dd z_1' \ldots \dd z_{n-m}'\]
with
\[p_{n,m} \coloneq \PP(h(y_1,\ldots, y_m, z_1,\ldots, z_{n-m}) h(y_1,\ldots, y_m, z_1',\ldots, z_{n-m}')=1).\]
Similarly to the expectation, we evaluate the above integral inductively. Let again $y_1 \coloneq \mathbf 0$, and for arbitrary $s>0$ define
\[I_{n,m,s} \coloneq \int_{(sB^d)^{2n-m-1}} p_{n,m} \prod_{(u,v)\in J_{n,m}} \ind(\|u-v\|\leq s)\dd y_1 \ldots \dd y_{m}\dd z_1\ldots \dd z_{n-m} \dd z_1' \ldots \dd z_{n-m}',\]
where $J_{n,m} = \{y_1,\ldots, y_m, z_1,\ldots, z_{n-m}\}^2 \cup \{y_1,\ldots, y_m, z_1',\ldots, z_{n-m}'\}^2$ contains the edges (pairs of points) in the two desired complete graphs.

We again establish a recurrence.
For the base case of the induction, let $m=1$ and $n$ arbitrary: we then simply have $I_{n,1,s_t} = I_{n,s_t}^2$, as the two complete graphs intersect only in a point (the fixed $y_1=\mathbf 0$), but not a full edge. 
Then, our recurrence goes from $(n,m)$ to $(n+1,m+1)$. Observe that the set of points for the latter includes one additional point, $y_{m+1}$, and we have
\[J_{n+1,m+1} = J_{n,m}\cup (\{y_{m+1}\}\times \{y_1,\ldots, y_m, z_1,\ldots, z_{n-m},z_1',\ldots, z_{n-m}'\}).\]
The main idea is the same as for the expectation: we can assume all necessary edges have length at most $2\|y_{m+1}\|$, and deduce that the connection probability of the points to $y_{m+1}$ is $\Theta_t (g(\|y_{m+1}\|))$.
This exactly handles the existence of the $2n-m$ edges in $J_{n+1,m+1}\setminus J_{n,m}$, and only the ones required for $J_{n,m}$ remain in the integral. Altogether, this gives
\[I_{n+1,m+1,s} \approx \int_0^{s_t} (g(x))^{2n-m} I_{n,m,x} x^{d-1} \dd x,\]
where $(g(x))^{2n-m}$ corresponds to the existence probability of the edges with $y_{m+1}$, and $I_{n,m,x}$ to the rest.
Similarly to the expectation, this yields the asymptotic behaviour:
for $s_t<2r_t$, $I_{n,m,s_t}\approx s_t^{a_{n,m}d}$, and for $s_t>2r_t$, 
\[I_{n,m,s_t} \approx
\begin{cases}
    r_t^{a_{n,m}d}, & \text{ if } \frac{\alpha}{d} > \frac{a_{n,m}}{b_{n,m}},\\
    r_t^{a_{n,m}d}\ln \l(\frac{2s_t}{r_t}\r), & \text{ if } \frac{\alpha}{d} = \frac{a_{n,m}}{b_{n,m}},\\
    r_t^{b_{n,m}\alpha} s_t^{a_{n,m}d-b_{n,m}\alpha}, & \text{ if }  \frac{\alpha}{d} < \frac{a_{n,m}}{b_{n,m}},
\end{cases}\]
with $a_{n,m}= 2n-m-1$ and $b_{n,m}=n(n-1)-\frac 12 m(m-1)$. 
Comparing the explicit formulas, we see that $V_{n,m}(t) = \Theta_t(t^{2n-m} I_{n,m,1})$, implying the variance claim of the theorem. 

We omit the explicit computation of these asymptotics, as they are, in essence, the same as the one detailed for the expectation; importantly, only the exponents need to be tracked. To do so, note that the recurrence among the exponents is $a_{n+1,m+1} = a_{n,m}+1$ and $b_{n+1,m+1} = b_{n,m}+(2n-m)$, which essentially comes from the two factors in the integral ($x^d$ to the power $1$, and $g(x)$ to the power $2n-m$).



In full generality, it is difficult to express cleanly which summand in the variance dominates. 
If we assume $\lim_{t\to \infty} \EE G_{K_n} <\infty$, the resulting upper bound on $r_t$ makes it possible (though still not simple) to compare the terms. 
We can in particular derive that $m=n-1$ is one of the dominant terms, (i.e. $V_{n,m}(t)=\c O_t(V_{n,n-1}(t))$ for all $m=1,\ldots, n-1$), and $V_{n,n-1}(t)\to 0$ as $t\to \infty$.

To prove the Poisson limit theorem, we use \cite[Theorem~3.8]{Pianoforte} as before in the proof of \Cref{thm:SRGG crossings}, adapted to edge-marked Poisson point processes.
By this result, if $Z$ is a Poisson distributed random variable with parameter $\EE G_{K_n}$ as in the theorem, $\dTV(G_{K_n},Z)\lesssim \VV G_{K_n}-\EE G_{K_n}$.
This gives the statement together with the variance bounds.   
\end{proof}

\subsection{Planarity of the SRGG}

Finally, we consider the asymptotic probability that the Pareto soft random geometric graph is planar.

\begin{theorem}[Planarity of the Pareto SRGG]
\label{thm:SRGGplanarity}
Let $\c G_t$ denote the soft random geometric graph with $\Pareto(r_t,\alpha)$ edge weights.
Then there exist constants $0<c^*\leq c^{**}<\infty$ such that the following hold.\\
If $\alpha > \frac 45 d$,
\begin{align*}
    \lim_{t\to \infty} \PP(\c G_t \text{ planar}) & =
        \begin{cases}
            1, & \text{ if } t r_t^{\frac 45d} \to 0,\\
            0, & \text{ if } t r_t^{\frac 45d} \to \infty.
        \end{cases}
    \intertext{Additionally, if $\EE G_{K_5}\to \gamma \in (0,\infty)$, i.e. the expected number of subgraphs isomorphic to $K_5$ converges to a non-zero constant, then $\lim_{t\to \infty}\PP(\c G_t \text{ planar})= e^{-\gamma}$. 
    \newline 
    If $\alpha=\frac 45 d$,}
    \lim_{t\to \infty} \PP(\c G_t \text{ planar}) & = 
        \begin{cases}
            1, & \text{ if } t r_t^{\frac 45d} =t r_t^\alpha \to 0,\\
            0, & \text{ if } t r_t^{\frac 45d} = t r_t^\alpha > c^{**} \text{ for $t$ sufficiently large}.
        \end{cases}\\
    \intertext{If $\alpha<\frac 45 d$,}
    \lim_{t\to \infty} \PP(\c G_t \text{ planar}) &= 
        \begin{cases}
            1, & \text{ if } t r_t^{\alpha} < c^{*\phantom{*}}\text{ for $t$ sufficiently large},\\
            0, & \text{ if } t r_t^{\alpha} > c^{**} \text{ for $t$ sufficiently large}.
        \end{cases}
\end{align*}
\end{theorem}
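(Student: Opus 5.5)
We follow the strategy of \Cref{thm:RGGplanarity}: non-planarity is certified by a $K_5$ subgraph, and planarity by ruling out every subdivision of $K_5$ and $K_{3,3}$.

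\emph{Non-planarity.} From \Cref{thm:SRGGcliques} with $n=5$ we get $\EE G_{K_5}=\Theta_t(t^5r_t^{4d})=\Theta_t((tr_t^{4d/5})^5)$ if $\alpha>\frac45 d$, an extra factor $\ln(1/r_t)$ if $\alpha=\frac45 d$, and $\Theta_t((tr_t^{2\alpha})^5)$ if $\alpha<\frac45 d$. Whenever $\EE G_{K_5}\to\infty$ and $\VV G_{K_5}=o(\EE G_{K_5}^2)$, Chebyshev gives $G_{K_5}\geq1$ w.h.p., hence non-planarity. To check the variance condition we compare $V_{5,m}(t)=\Theta_t(t^{10-m}r_t^{d a_{5,m}})$ (or its other cases) with $\EE G_{K_5}^2$, for $m=1,\dots,4$; this is a finite exponent comparison. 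For $\alpha>\frac45 d$ and $tr_t^{4d/5}\to\infty$ the first case applies, and the ratio is $t^{-m}r_t^{-d(m-1)}$, which we show tends to $0$. If $\EE G_{K_5}\to\gamma$, the Poisson convergence in \Cref{thm:SRGGcliques} yields $\PP(G_{K_5}=0)\to e^{-\gamma}$.

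For $\alpha\leq\frac45 d$ and $tr_t^\alpha>c^{**}$, a $K_5$ alone is not enough, since $\EE G_{K_5}$ may stay bounded. Instead we look for a \emph{subdivided} $K_5$ or $K_{3,3}$ whose principal vertices lie at distance of order $1$ and whose connecting paths are short. When $tr_t^\alpha$ is a large constant, each point has a positive-probability number of edges of length of order $1$. Concretely, $\EE\deg=t\int g(\|x\|)\dd x\approx tr_t^\alpha$ when $\alpha<d$, which is the case here since $\alpha\leq\frac45 d<d$. The long-edge structure then resembles an Erd\H{o}s--R\'enyi graph on $\approx t$ vertices with edge probability $\approx r_t^\alpha$, i.e. mean degree $c=tr_t^\alpha$. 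By the Erd\H{o}s--R\'enyi planarity threshold at mean degree $1$ (a giant component contains $K_{3,3}$ subdivisions), non-planarity should follow once $c^{**}$ is large.

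To make this rigorous, take $W'\subset W$ and restrict to edges of length in $[\delta,\diam W]$. These exist independently with probabilities in $[r_t^\alpha\diam(W)^{-\alpha},r_t^\alpha\delta^{-\alpha}]$, so the restricted graph stochastically dominates an inhomogeneous random graph on $\Po(t)$ vertices with edge probability $c\,\delta'/t$. We would then invoke the classical result that $G(n,p)$ with $np\geq c'>1$ is non-planar w.h.p. (it contains a subdivided $K_{3,3}$ in the complex giant component), together with a monotone coupling. This coupling is the main obstacle, because edges below length $\delta$ must be discarded while keeping independence. Independence holds automatically in the SRGG, since edge marks are i.i.d.

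\emph{Planarity.} By Kuratowski we bound the expected number of subdivisions of $K_5$ and $K_{3,3}$ with a union bound over shapes. A subdivision with $k$ principal and $\ell$ internal vertices has $e=e_0+\ell$ edges, where $e_0\in\{9,10\}$. Its expected count is bounded via the Mecke formula by $t^{k+\ell}\int\prod g$. Integrating the internal vertices of each path (chained convolutions of $g$) bounds a path of length $j$ by $(C\,tr_t^{\alpha'})^{j-1}g^{*}$, where $r_t^{\alpha'}$ stands for $\int g\approx r_t^{\min(\alpha,d)}$ up to logs. When $tr_t^\alpha<c^*$ is small the geometric sum over $\ell$ converges, and the total is dominated by the $\ell=0$ terms, which form a cluster of $k\geq5$ vertices. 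For $K_5$ itself the count is $\Theta((tr_t^{2\alpha})^5)$ or $t^5r_t^{4d}$, which tends to $0$ under each hypothesis: for $\alpha<\frac45 d$ we have $tr_t^{2\alpha}\leq c^*r_t^\alpha\to0$, and for $\alpha\geq\frac45 d$ with $tr_t^{4d/5}\to0$ it vanishes directly, with the log in the boundary case absorbed because then $tr_t^\alpha\to0$ polynomially. The remaining work is to verify that subdivided $K_{3,3}$ and $K_5$ terms are dominated similarly, using the recursive edge-length-restricted induction of \Cref{thm:SRGGcliques} for general graphs. In the regime $\alpha>\frac45 d$ with $tr_t^{4d/5}\to0$ we get $tr_t^{\alpha}\to0$ as well, so the path sums are harmless there too.
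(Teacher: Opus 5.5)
Your non-planarity half is sound.

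\begin{itemize}
\item For $\alpha>\frac45 d$, Chebyshev on $G_{K_5}$ works, since $V_{5,m}(t)/(\EE G_{K_5})^2\asymp t^{-1}(tr_t^d)^{-(m-1)}\lesssim t^{-1/4}$ for $m\le4$. The paper instead uses that the Gilbert graph is a subgraph, and mentions your route as an alternative. The case thresholds of \Cref{thm:SRGGcliques} for $n=5$ sit at $\alpha=\frac25 d$, not $\frac45 d$; this is harmless here.
\item For $tr_t^\alpha>c^{**}$, your Erd\H{o}s--R\'enyi comparison is a genuinely different route from the paper's Euler bound $|E|\le3|V|$ plus Chebyshev. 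No restriction to long edges is needed: $g\ge r_t^\alpha(\diam W)^{-\alpha}$ on $[0,\diam W]$, so the pairs with $W_{y,z}\ge\diam W$ already form an exact $G(\Po(t),p)$ subgraph with $p=r_t^\alpha(\diam W)^{-\alpha}$. The {\L}uczak--Pittel--Wierman theorem then applies once $c^{**}>(\diam W)^\alpha$. This buys a better constant at the price of a deep external result, whereas the paper's argument is elementary.
\end{itemize}

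The genuine gap is in the planarity half for $\alpha<d$. Reading your $g^{*}$ as $g(\|y-z\|)$, your path bound is correct: splitting by the nearer endpoint gives $g*g\le 2^{\alpha+1}\|g\|_1\,g$, with $\|g\|_1=\int_{(\diam W)B^d}g(\|x\|)\dd x\asymp r_t^\alpha$. So a subdivision with $\ell$ internal vertices costs at most $(Ct\|g\|_1)^\ell$ times its base graph, and everything reduces to $\EE G_{K_5}$ and $\EE G_{K_{3,3}}$.

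However, $\EE G_{K_{3,3}}\to0$ is exactly the ``remaining work'' you defer, and it is the crux. \Cref{thm:SRGGcliques} only treats complete graphs; its recursion relies on that structure, for instance in the reflection step. Crude bounds also fail:
\begin{itemize}
\item A spanning tree gives $t^6\|g\|_1^5\asymp t\,(tr_t^\alpha)^5$. This diverges when $tr_t^\alpha\asymp1$, and also for $\frac45 d<\alpha<\frac{24}{25}d$ with $tr_t^{4d/5}\asymp1$.
\item Your own convolution bound applied to the theta graph with path lengths $1,3,3$ inside $K_{3,3}$ gives $(tr_t^\alpha)^4\,t^2\int_{W^2}g(\|y_1-y_2\|)^3\asymp r_t^{\min\{d,3\alpha\}-2\alpha}$ when $tr_t^\alpha\asymp1$. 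This diverges for $\alpha>d/2$.
\end{itemize}
The loss is structural. The bound $g*g\le C\|g\|_1 g$ discards a factor $s^{d-\alpha}$: for $s>2r_t$ and $\alpha>d/2$ one actually has $g*g(s)\approx r_t^{2\alpha}s^{d-2\alpha}$, not $r_t^{2\alpha}s^{-\alpha}$.

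The paper's key idea is to keep this localisation. It passes to theta subgraphs: two vertices joined by three internally disjoint paths, with at least $6$ vertices and $n_1+n_2\ge4$. It proves the $s$-dependent two-point bound of \Cref{claim:path}, $I_n(s)\lesssim r_t^{n\alpha}s^{(n-1)d-n\alpha}$ for $n<\frac{d}{d-\alpha}$ and $s>2r_t$. It then integrates the product of three such bounds against $s^{d-1}$. The exponent bookkeeping in $N(d-\alpha)-(k-1)d$ is where $tr_t^{4d/5}\lesssim1$ and $tr_t^\alpha<c^*$ enter.

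Your route can probably be completed by a genuine multi-scale computation of $\EE G_{K_{3,3}}$, but as written its central estimate is missing. For $\alpha\ge d$ the tree bound $t^6r_t^{5d}(\ln(1/r_t))^5\to0$ does close your argument; the paper uses the even simpler absence of paths of length $5$.
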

\begin{remark}
    Recall that $G_{K_5}$ denotes the number of subgraphs isomorphic to $K_5$ in $\c G_t$. If $\alpha>\frac 45d$, we will show $\PP(\c G_t \text{ planar}) \sim \PP(G_{K_5} =0)$, and in particular, planarity is purely decided by the non-existence of a $K_5$ in $\c G_t$, similarly to the hard random geometric graph (\Cref{thm:RGGplanarity}).
    For $\alpha> \frac 45d$, this is not the case any more. Rather, the smallest order non-planar subgraph in $\c G_t$ might actually consist of many vertices. 
    Formally, one can show that if $\lim_{t\to \infty} tr_t^\alpha<\infty$, the probability that a non-planar graph of order $n$ appears in $\c G_t$ converges to $0$ for any fixed $n$, and so the size of the smallest non-planar subgraph grows with $t$.
    This is also the reason why the thresholds stated for $r_t$ are not sharp: finding precise existence results for high-order subgraphs appears to be difficult. 
    Whether there exists a sharp phase transition is unclear.
\end{remark}

\begin{proof}
As there is some overlap in the statements of the different $\alpha$ cases, it is sufficient to show that there exist constants $0<c^*\leq c^{**} <\infty$ satisfying the following:
\begin{itemize}
    \item[i)] If $tr_t^{\frac 45d} \to \infty$ or $tr_t^\alpha>c^{**}$, then $\lim_{t\to \infty} \PP(\c G_t \text{ planar})=0$.
    \item[ii)] If $tr_t^{\frac 45d} \to \lambda \in [0,\infty)$ and $tr_t^\alpha<c^{*}$,
     \begin{align*}
       \text{ (a) } & \lim_{t\to \infty} \PP(G_{K_5} = 0) = 
        \begin{cases}
        e^{-\gamma},& \caseif \lambda > 0 \text{ and }\EE G_{K_5}\to \gamma, \\
        1,&  \caseif \lambda = 0;
        \end{cases} \\
        \text{ (b) }& \lim_{t\to \infty} \PP(\c G_t \text{ planar} \mid  G_{K_5}=0) = 1.
    \end{align*}
\end{itemize}
Indeed, these assertions together yield the theorem. 
Firstly, i) yields the last, non-planarity statement of the theorem for each $\alpha$.
Secondly, for $\alpha > \frac 45d$ and $\lim_{t\to \infty} tr_t^{\frac 45d}<\infty$, we have $tr_t^{\alpha}\to 0$, hence $\PP(\c G_t \text{ planar})\sim \PP(G_{K_5}=0)$ as $t\to \infty$ by ii)(b), which yields the first statement of the $\alpha >\frac 4d d$ case by ii)(a), as well as the claim for $\EE G_{K_5}\to \gamma\in (0,\infty)$. 
Lastly, if $\alpha < \frac 45 d$, then $tr_t^\alpha < c^{*}$ implies $tr_t^{\frac 45d}\to 0$, and planarity follows from ii) w.h.p. The threshold case $\alpha = \frac 45d$ can be obtained the same way. 


\subsubsection*{Proof of i) (non-planarity)}

If $tr_t^{\frac45 d}\to \infty$, the statement already follows from the fact that the hard random geometric graph (which is a subgraph of the soft model considered here) is non-planar with high probability (according to \Cref{thm:RGGplanarity}), in particular from $\PP(G_{K_5}\geq 1)\to 1$. Alternatively, one could also use the results for the expectation and variance given in \Cref{thm:SRGGcliques} to show the same statement directly.

Now, we show that there exists $c^{**}$ such that if $tr_t^\alpha > c^{**}$, then $\c G_t$ is non-planar with high probability; we may assume $\alpha < \frac 45d$, as the larger $\alpha$ cases are already solved by the previous argument. 
To prove this statement, we use a standard result from graph theory (see e.g. \cite[Corollary~4.2.8]{Diestel2017}), which states that if a graph with $n$ vertices and $m$ edges is planar, the inequality $m\leq 3n$ must hold.
In our case, writing $f_0=|\eta_t|$ for the number of vertices and $f_1 = G_{K_2}$ for the number of edges, we have that if $\c G_t$ is planar, then $N_t\coloneq 3f_0-f_1\geq 0$ must hold. 
Since the expected number of edges is $\EE f_1 = \Theta(t^2 r_t^\alpha)$ by \Cref{thm:SRGGcliques}, there exists a constant $c^{**}>0$ such that if $tr_t^\alpha\geq c^{**}$, then $\EE f_1>4t$ for large enough $t$, implying that $\EE N_t < -t<0$.
Thus, if $tr_t^\alpha \geq c^{**}$, we obtain the bound
\[\PP(\c G_t \text{ planar})\leq \PP(N_t\geq 0) = \PP(N_t-\EE N_t\geq -\EE N_t)  \leq \frac{\VV N_t}{(\EE N_t)^2}.\]
Clearly, $\VV f_0 = t$, and from \Cref{thm:SRGGcliques}, we also automatically have $(\EE N_t)^2  = \Theta_t( \max\{t^2,t^4 r_t^{2\alpha}\}) = \Theta_t(t^4 r_t^{2\alpha})$ and $\VV f_1 = \Theta_t(t^2 r_t^\alpha)+ \Theta_t(t^3r_t^{2\alpha}) = \Theta_t(t^3 r_t^{2\alpha})$. 
Consequently,
\[\PP(\c G_t \text{ planar})\leq\frac{9\VV f_0 + 6\sqrt{\VV f_0 \VV f_1} +\VV f_1}{(\EE N_t)^2}\lesssim \frac{t^3 r_t^{2\alpha}}{t^4 r_t^{2\alpha}} = \frac 1t \longrightarrow 0\]
as $t\to \infty$.

\subsubsection*{Proof of ii) (Poisson $K_5$-s and planarity)}

Let $tr_t^{\frac 45d} \to \lambda<\infty$ and $tr_t^\alpha<c^{*}$. 
By \Cref{thm:SRGGcliques}, this implies $\gamma\coloneq \limsup_{t\to \infty} \EE G_{K_5}<\infty$, and in particular $\gamma = 0$ exactly when $\lambda = 0$.
The latter automatically yields by Markov's inequality that if $\lambda =0$, then $\PP(G_{K_5}\geq 1)\leq \EE G_{K_5}\to \gamma=0$. 
If $\lambda>0$ and $\EE G_{K_5}$ is convergent, i.e. $\EE G_{K_5}\to \gamma\in (0,\infty)$, then we must have $\alpha \geq \frac 45 d$, and \Cref{thm:SRGGcliques} states that $G_{K_5}$ converges in distribution to a Poisson random variable with parameter $\gamma$, thus $\PP(G_{K_5}=0) \to e^{-\gamma}$ follows. 
Note that if  $t r_t^{\frac 45 d} \to \lambda\in (0,\infty)$, then by \Cref{thm:SRGGcliques}, the upper and lower bound for the expectation are of the same order and converge, but we a priori do not know whether the expectation itself converges, or fluctuates within this range. 

For the rest of the proof, we consider the statement ii)(b), which is equivalent to saying that under the given upper bounds for $r_t$, no subdivision of $K_{3,3}$, or proper subdivision of $K_5$ appears in the graph with high probability (where by proper subdivision we mean one with at least one additional vertex). 

Start with the case $\alpha\geq d$. We show that with high probability, no path of length $5$ appears in the graph (which implies no $K_{3,3}$, or proper subdivision of $K_5$). 
The expected number of paths of length $5$ is given, up to a constant, by
\[ I\coloneq t^6 \int_{W^6} \PP(y_1\con \ldots \con y_6) \dd y_1 \ldots \dd y_6 = t^6 \int_{W^6} g(\|y_1-y_2\|)\cdot \ldots \cdot g(\|y_5-y_6\|) \dd y_1\ldots \dd y_6,\]
where $g(x)$ denotes the connection probability of two points at distance $x$.
Setting $D\coloneq \diam(W)$ to be the diameter of $W$, we can obtain an upper bound by extending the domain of integration of $y_2$ from $W$ to $y_1+DB^d\supset W $, of $y_3$ from $W$ to $y_2+2DB^d\supset y_1+DB^d\supset W$, and generally $y_i$ from $W$ to $y_1+(i-1)D B^d$ for $i=2,\ldots, 6$. Applying a polar transform to the resulting integral, in particular writing $y_i = y_{i-1}+x_i u_i$ for $u_i\in S^{d-1}$ and $0< x_i \leq (i-1)D$, this yields
\[I \leq t^6 \int_W \prod_{i=1}^5 \l[\int_0^{(i-1)D} \int_{S^{d-1}} g(x_i) x_i^{d-1}\dd u_i \dd x_i\r]\dd y_1\lesssim t^6\l[\int_0^{5D} g(x) x^{d-1} \dd x\r]^5,\]
where $x_i^{d-1}$ is the Jacobian of the polar transformation. The last integral can be explicitly evaluated, and we have
\[I \lesssim 
\begin{cases}
    t^6 r_t^{5d},& \caseif \alpha>d \\
    t^6 r_t^{5d} \l(\ln\l(\frac{1}{r_t}\r)\r)^5, & \caseif \alpha = d.
\end{cases}\]
Under the given conditions on the regime, $t^6 r_t^{5d}\lesssim t^{-1/4}$ and $\ln (1/r_t)\lesssim \ln(t)$.
This yields that both above expressions tend to zero, thus $\c G_t$ contains no path of length $5$ with high probability, proving the statement of planarity.

For the rest of the proof, assume that $\alpha<d$. In this setting, paths or cycles with many edges may appear in the graph, and we need a stronger argument to handle planarity. However, general subgraph counts are difficult to determine. To avoid this, we use the following observation: any proper subdivision of $K_5$, or subdivision of $K_{3,3}$ (including $K_{3,3}$ itself), contains a subgraph of at least $6$ vertices where two of the points are connected via three independent paths (see \Cref{fig:subdivisions}). 

\begin{figure}[ht]
    \centering
    \hfill
    \begin{subfigure}{0.4\textwidth} 
    \centering
    \begin{tikzpicture}[scale=0.5]
        \coordinate (v1) at (0,0);
        \draw [fill=black] (v1) circle (2pt);
        \node at (v1) [below left = 1pt]{$v_1$};
        
        \coordinate (v2) at (4.5,-1);
        \draw [fill=black] (v2) circle (2pt);
        \node at (v2) [below = 2pt] {$v_2$};
        
        \coordinate (v3) at (6,2);
        \draw [fill=black] (v3) circle (2pt);
        \node at (v3) [right = 2pt] {$v_3$};
        
        \coordinate (v4) at (5,3.5);
        \draw [fill=black] (v4) circle (2pt);
        \node at (v4) [above right] {$v_4$};
        
        \coordinate (v5) at (1.5,4.5);
        \draw [fill=black] (v5) circle (2pt);
        \node at (v5) [above left = 1pt] {$v_5$};
        
        \coordinate (w) at (2.5,-2);
        \draw [fill=black] (w) circle (2pt);
        \node at (w) [below] {$w$};

        \draw [line width = 1pt] (v1)--(w)--(v2);
        \draw [line width = 1pt] (v1)--(v4)--(v3)--(v2);
        \draw [line width = 1pt] (v1)--(v5)--(v2);
        \draw [line width = 1pt, dashed] (v1)--(v3)--(v5)--(v4)--(v2);
    \end{tikzpicture}
    
    \caption{$v_1$ and $v_2$ are connected by the paths $(v_1-w-v_2)$, $(v_1-v_4-v_3-v_2)$ and $(v_1-v_5-v_2)$ in a proper subdivision of $K_5$}

    \end{subfigure}
    \hfill
    \begin{subfigure}{0.4\textwidth} 
    \centering
    \begin{tikzpicture}[scale=0.5]
    \coordinate (v1) at (0,0);
    \draw [fill=black] (v1) circle (2pt);
    \node at (v1) [below = 1pt]{$v_1$};
    
    \coordinate (v2) at (1.5,-0.5);
    \draw [fill=black] (v2) circle (2pt);
    \node at (v2) [below right] {$v_2$};
    
    \coordinate (v3) at (5,-1);
    \draw [fill=black] (v3) circle (2pt);
    \node at (v3) [below = 2pt] {$v_3$};        
    
    \coordinate (w1) at (-1,4.5);
    \draw [fill=black] (w1) circle (2pt);
    \node at (w1) [above] {$w_1$};

    \coordinate (w2) at (2,4);
    \draw [fill=black] (w2) circle (2pt);
    \node at (w2) [above] {$w_2$};
    
    \coordinate (w3) at (4.5,5.);
    \draw [fill=black] (w3) circle (2pt);
    \node at (w3) [above right] {$w_3$};
    
    \draw [line width = 1pt] (v2)--(w2);
    \draw [line width = 1pt] (v2)--(w1)--(v1)--(w2);
    \draw [line width = 1pt] (v2)--(w3)--(v3)--(w2);
    \draw [line width = 1pt, dashed] (v1)--(w3);
    \draw [line width = 1pt, dashed] (v3)--(w1);
    \end{tikzpicture}    
    \caption{$v_2$ and $w_2$ are connected by the paths $(v_2-w_2)$, $(v_2-w_1-v_1-w_2)$ and $(v_2-w_3-v_3-w_2)$ in a subdivision of $K_{3,3}$}
    \end{subfigure}
    \hfill
    \caption{Subgraphs induced by subdivisions of $K_5$ and $K_{3,3}$}
    \label{fig:subdivisions}
\end{figure}

As a consequence, it is sufficient for us to show that such subgraphs do not occur in $\c G_t$ with high probability; in some sense, these are the smallest tractable graphs in this model whose non-existence guarantees planarity.

For $1\leq n_1\leq n_2\leq n_3$, denote by $\c H_{n_1,n_2,n_3}$ the number of subgraphs of $\c G_t$ isomorphic to the graph on $n-1\coloneq n_1+n_2+n_3-1$ vertices, where two of the points are connected by three disjoint paths of length $n_1$, $n_2$ and $n_3$. 
By the previous argument, we will assume that $n-1\geq 6$, and also $n_1+n_2\geq 4$; the latter can be observed on \Cref{fig:subdivisions}. 
Formally, it is then sufficient to show that $\sum_{n_1,n_2,n_3} \c H_{n_1,n_2,n_3}$ is zero with high probability, which we show by proving that the expectation converges to zero.

By the Mecke formula,
\begin{align*}
    \EE \c H_{n_1,n_2,n_3} 
    &\leq 12 
    \begin{multlined}[t]
    t^{n-1} 
    \int_{W^{n-1}} 
    \PP(y_1\con u_2 \con \ldots \con u_{n_1} \con y_2) 
    \PP(y_1\con v_2 \con \ldots \con v_{n_2} \con y_2) \\
    \times \PP(y_1\con w_2 \con \ldots \con w_{n_3} \con y_2) 
    \dd u_2\ldots \dd u_{n_1} \dd v_2 \ldots \dd v_{n_2} \dd w_2 \ldots \dd w_{n_3} \dd y_1 \dd y_2,
    \end{multlined}
\end{align*}
where the factor $12$ comes from symmetry: counting the pair $(y_1,y_2)$ in both orders, and the paths in at most $6$ possible orders (if all path lengths are the same).
We evaluate this as follows.
For $0<s\leq D = \diam (W)$ and $n\in \N$, set $y_0\coloneq \mathbf 0 \coloneq (0,\ldots, 0)\in \R^d$, $y_n=y_n(s)\coloneq(s,0,\ldots,0)\in \R^d$, and write 
\[I_n(s) \coloneq  \int_{(\R^d)^{n-1}} \PP( y_0 \con y_1 \con\ldots \con y_n(s)) \ind (\|y_i-y_j \|\leq D \ \forall i,j=0,\ldots, n) \dd y_2\ldots \dd y_{n-1}\]
for $n\geq 2$, and $I_1(s)=\PP(y_0\con y_n(s))$ for $n=1$. 
Roughly, this corresponds to the probability that two points at distance $s$ are connected by a path of length $n$.
Note that while the two anchor points have distance $s$, the edge lengths within the path can be as long as containment in $W$ in the original integral would allow: this is represented by the maximal length condition of $D$.
Using this notation, we have
\begin{align}\label{eq:chord}
\begin{split}
    \EE \c H_{n_1,n_2,n_3} 
    & \leq 12 t^{n-1} \int_{W^2} I_{n_1}(\| y_1- y_2\|) I_{n_2}(\| y_1- y_2\|) I_{n_3}(\| y_1- y_2\|) \dd y_1 \dd y_2\\
    & \leq 12 \lambda_{d-1}(S^{d-1})t^{n-1} \int_0^{D} I_{n_1}(s) I_{n_2}(s) I_{n_3}(s) s^{d-1} \dd s,
\end{split}
\end{align}
where in the second step $x_2$ is transformed into polar coordinates with respect to $x_1$, and its directional component, as well as $x_1$, is integrated out.


\begin{claim}\label{claim:path}
For fixed $\alpha<d$, 
there exist $c', c''>0$ depending only on $\alpha$, $d$ and $W$ such that 
$I_n(s)$ is upper bounded by the following quantities:
    \begin{center}
    \begin{tabular}{c||C{4.5cm}|C{6cm}}
     & $s_t \leq 2r_t$ & $s_t > 2r_t$\\
    \hline
    \hline
    $n <\frac{d}{d-\alpha} $ 
    & $c'r_t^{(n-1)d}$ 
    & $c'r_t^{n\alpha} s^{(n-1)d-n\alpha}$ \\
    \hline
    $n =\frac{d}{d-\alpha} $ 
    & $c' (c'' r_t^{\alpha})^n \ln \l(\frac{1}{r_t}\r)$ 
    & $c' (c'' r_t^{\alpha})^n \ln \l(\frac{1}{s}\r)$ \\
    \hline
    $ n >\frac{d}{d-\alpha}$ & \multicolumn{2}{c}{$c'(c''r_t^\alpha)^n$} 
    \end{tabular}
    \end{center}
\end{claim}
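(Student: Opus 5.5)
We argue by induction on $n$, using a convolution recurrence in the same spirit as \eqref{eq:SRGG clique recurrence}. The base case $n=1$ is immediate: $I_1(s)=g(s)=\min\{1,r_t^\alpha s^{-\alpha}\}$. This equals $1\approx r_t^{0}$ for $s\le 2r_t$, up to constants, and $r_t^\alpha s^{-\alpha}$ for $s>2r_t$. This matches the first row, since $1<\frac{d}{d-\alpha}$.

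For the inductive step, fix $y_n(s)$ and integrate out the last internal vertex $y_{n-1}$. Write $y_{n-1}=y_n(s)+xu$ with $u\in S^{d-1}$ and $0<x\le D$, and condition on the distance $\sigma=\|y_{n-1}\|$. Rotational invariance of the remaining path integral then gives
\[I_n(s)\lesssim \int_0^D\int_{S^{d-1}} g(x)\, I_{n-1}\big(\|y_n(s)+xu\|\big)\,x^{d-1}\,\dd u\,\dd x .\]
We bound this by splitting into three regions.
\begin{itemize}
\item[(A)] $x\le s/2$. Here $\sigma\approx s$, so the contribution is about $I_{n-1}(s)\int_0^{s/2}g(x)x^{d-1}\dd x$.
\item[(B)] $x\ge 2s$. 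Here $\sigma\approx x$, so the contribution is about $\int_{2s}^D g(x)I_{n-1}(x)x^{d-1}\dd x$.
\item[(C)] $x\approx s$. Here $g(x)\approx g(s)$, and integrating $y_{n-1}$ over an annulus near $y_n$ gives about $g(s)\int_0^{3s}I_{n-1}(\sigma)\sigma^{d-1}\dd\sigma$.
\end{itemize}
This decomposition is the key step. Since $\alpha<d$, we have $\int_0^{a}g(x)x^{d-1}\dd x\approx r_t^\alpha a^{d-\alpha}$ for $a>2r_t$, and $\approx a^d$ for $a\le 2r_t$.

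Inserting the induction hypothesis, every term becomes a power integral. Suppose $I_{n-1}(\sigma)\lesssim r_t^{(n-1)\alpha}\sigma^{\beta_{n-1}}$ with $\beta_{n-1}=(n-2)d-(n-1)\alpha$. The key exponent is $\beta_{n-1}+d=(n-1)(d-\alpha)$.
\begin{itemize}
\item The term in (A) contributes $r_t^{n\alpha}s^{\beta_{n-1}+d-\alpha}=r_t^{n\alpha}s^{\beta_n}$, with $\beta_n=(n-1)d-n\alpha$.
\item The term in (B) integrates $x^{\beta_{n-1}+d-\alpha-1}=x^{\beta_n-1}$ over $[2s,D]$. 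This is dominated by the lower endpoint when $\beta_n<0$, giving $r_t^{n\alpha}s^{\beta_n}$. It gives a logarithm $\ln(D/s)$ when $\beta_n=0$, and a constant times $r_t^{n\alpha}$ when $\beta_n>0$.
\item The term in (C) again gives $r_t^{n\alpha}s^{\beta_n}$.
\end{itemize}
Note that $\beta_n<0$ exactly when $n<\frac{d}{d-\alpha}$, and $\beta_n=0$ exactly when $n=\frac{d}{d-\alpha}$. These are precisely the three rows of the table.

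For $s\le 2r_t$, the same computation with $g\approx1$ on $[0,2r_t]$ evaluates the formulas at $s\approx r_t$. In the first row this gives $r_t^{n\alpha}r_t^{\beta_n}=r_t^{(n-1)d}$, as claimed. In the logarithmic row it gives $\ln(1/r_t)$.

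Past the threshold, the induction must propagate a bound that is uniform in $s$ and carries the logarithm correctly. For $n-1>\frac{d}{d-\alpha}$, the hypothesis $I_{n-1}\lesssim (c''r_t^\alpha)^{n-1}$ makes all three terms at most
\[(c''r_t^\alpha)^{n-1}\int_0^{3D}g(x)x^{d-1}\dd x\lesssim (c'' r_t^\alpha)^{n-1}\, r_t^\alpha D^{d-\alpha}.\]
This is exactly why the constant appears as $(c'')^n$, growing geometrically: we take $c''$ to be a fixed multiple of $D^{d-\alpha}$ times the constants from the three regions. When $n-1=\frac{d}{d-\alpha}$, we must check that $\int_0^D g(x)\ln(1/x)x^{d-1}\dd x\lesssim r_t^\alpha$. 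This holds because $x^{d-\alpha-1}\ln(1/x)$ is integrable near $0$, so the logarithm is absorbed.

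We expect the main obstacle to be the careful bookkeeping of constants, so that $c'$ stays independent of $n$ while $c''$ absorbs the per-step factor. The geometric decomposition (A)–(C) is comparatively routine.
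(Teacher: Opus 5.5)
Your proposal is correct and is essentially the paper's argument: the paper peels off the vertex adjacent to $y_0$ rather than the one adjacent to $y_n(s)$, but its case split is exactly your (A)--(C). That split asks whether the smaller of that vertex's two distances to the anchors exceeds $s$, and if not, which anchor is nearer; the paper then uses the same power-integral bookkeeping below $\frac{d}{d-\alpha}$ and the same per-step factor $\int_0^D g(x)x^{d-1}\,\dd x\lesssim r_t^\alpha$ above it. Your explicit treatment of the logarithmic transition step fills in details the paper omits, and the bound $r_t^{n\alpha}(1+\ln(D/s))$ that your computation produces at $n=\frac{d}{d-\alpha}$ is the correct form, since the table's $\ln(1/s)$ is non-positive for $s\geq 1$ and so cannot bound $I_n(s)$ there.
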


\begin{proof}
Denote the expressions in the table by $I_n^+(s)$; observe that for any $n$, this is a monotone decreasing function in $s$.

We show the upper bound using induction on $n$. 
Since $I_1(s)=\PP(y_0\con y_1(s)) = g(s) = \min\l\{1, r_t^\alpha s^{-\alpha}\r\}$ by definition, equality trivially follows for $n = 1 < \frac{d}{d-\alpha}$. 
For the inductive step, we claim the upper bound
\begin{equation*}
    I_{n+1}(s) \leq \lambda_{d-1}(S^{d-1})
    \l[\int_{s}^{D} g(x) I_{n}^+\l(\frac x2\r) x^{d-1}\dd x +\int_0^{s} \l( g(x) I_{n}^+\l(\frac s2\r) + g\l(\frac s2\r) I_{n}^+(x)\r) x^{d-1}\dd x\r]
\end{equation*}
holds for all $n\geq 1$. 
We show this as follows.
By the independence of edges, we have
\begin{align*}
    I_{n+1}(s) & = \int_{(\R^d)^{n}} \PP(y_0\con y_1) 
 \begin{multlined}[t] \PP(y_1\con y_2\con \ldots \con y_{n+1}(s)) \\ 
    \times \ind (\|y_i-y_j\|\leq D \ \forall i,j=0,\ldots, n+1)  \dd y_2\ldots y_{n} \dd y_1 
    \end{multlined}\\
    & = \int_{\R^d} g(\|y_0-y_1\|)I_{n}(\|y_1- y_{n+1}(s)\|)\ind (\|y_0-y_1\|, \|y_1-y_{n+1}(s)\|\leq D )\dd y_1.
\end{align*}
We get an upper bound by writing $I_n^+$ instead of $I_n$ in the integral.
Consider now the possible distance configurations: recall that $\|y_0-y_{n+1}(s)\| = s$ by construction, and write $x_-$ and $x^+$ for the minimum and maximum of $\|y_0-y_1\|, \|y_1-y_{n+1}(s)\|$, respectively.

First, if $x_-\geq s$, then by the triangle inequality, $x_-\geq x^+/2$.
As a consequence, if we apply a polar transform to $y_1$ with respect to $y_0$, (i.e. write $y_1=y_0+x u$ with $u\in S^{d-1}$ and $x\in [s/2,D]$), 
then the integrand is at most $g(x)I_{n}^+(x/2)$ by the monotonicity of $I_{n}^+$.

On the other hand, if $x_- < s$, we use that $x^+\geq s/2$ by the triangle inequality.
If $x_- = \|y_0-y_1\|$, write $y_1=y_0+xu$ with $x\in [0,s]$ and obtain the upper bound $g(x)I_{n}^+(s/2)$; if $x_- = \|y_1-y_{n+1}(s)\|$, let $y_1 = y_{n+1}+xu$ to get $g(s/2)I_{n}^+(x)$. This gives the second two terms of the expression, concluding the proof of the recurrence.

This geometrically obtained expression can be somewhat simplified, by observing first that changing the argument of $I_n^+$ changes the expression only by a constant factor, and $g(x)I_n(s)\gtrsim g(s) I_n(x)$ for $x\leq s$ by explicitly comparing their definitions; hence it is essentially sufficient to consider the first term of the second integral. 
Secondly, note that for any fixed $\alpha<d$, there are only finitely many integers $n$ below $\frac{d}{d-\alpha}$.
Consequently, we can choose $c_0$ such that 
\[I_{n+1}(s)\leq c_0 \l[ \int_{s}^D g(x)I_n^+(x)x^{d-1} \dd x  + I_n^+(s) \int_0^{s} g(x)x^{d-1} \dd x \r]\]
holds for all $n\leq \frac{d}{d-\alpha}$. 
Using this recurrence, \Cref{claim:path} then follows (for such $n$) by a standard integral computation; we omit the details, and note that a similar calculation was carried out in the proof of \Cref{thm:SRGGcliques}. 
This is applicable up to $n_0$ with $n_0\leq \frac{d}{d-\alpha}<n_0+1$, in which case the integral is of the order $r_t^{\alpha (n_0+1)}$.

For $n>\frac{d}{d-\alpha}$, the upper bound $I_n^+(s)\equiv I_n^+$ does not depend on $s$, and so we have
\begin{align*}
    I_{n+1}(s)  
    & \leq \int_{2s}^D g(x)I_n^+ x^{d-1}\dd x  + \int_{0}^{2s} (g(x)I_n^+ + g(x)4^\alpha I_n^+)x^{d-1}\dd x \\
    & \leq I_n^+ (4^\alpha+1)\int_{0}^D g(x)x^{d-1}\dd x.
\end{align*}
Since the last integral expression does not depend on $n$, and is of the order $r_t^\alpha$ (using that $\alpha<d$), we may choose $c''$ such that $I_{n+1}(s)\leq I_n^+  c'' r_t^\alpha$ holds for all $n > \frac{d}{d-\alpha}$. This is sufficient to conclude the proof of the claim.
\end{proof}

We return to finding an upper bound for $\EE \c H_{n_1,n_2,n_3}$.
Choose $k,l\in\{0,1,2,3\}$ such that 
\[1\leq n_1\leq \ldots \leq n_k < \frac{d}{d-\alpha} = n_{k+1} = \ldots = n_l < n_{l+1} \leq \ldots \leq n_3.\]
In other words, let us have $k$ indices below $\frac{d}{d-\alpha}$, $l-k$ equal to it, and $3-l$ above. 
The integral in \eqref{eq:chord} is then upper bounded by 
\[\Bigg [
    \begin{multlined}[t]
    \int_0^{2r_t} 
    \prod_{i=1}^k \bigg(c' r_t^{(n_i-1)d}\bigg) \prod_{i=k+1}^l \bigg( c'( c''r_t^{\alpha})^{n_i}\ln(1/r_t)\bigg) s^{d-1} \dd s 
    \\ +
    \int_{2r_t}^{D} \prod_{i=1}^k \bigg(c' r_t^{n_i \alpha} s^{(n_i-1)d-n_i\alpha} \bigg) \prod_{i=k+1}^l \bigg( c'( c''r_t^{\alpha})^{n_i}\ln(1/s)\bigg) s^{d-1} \dd s 
    \Bigg]
    \prod_{i=l+1}^3 \bigg(c' (c''r_t^\alpha)^{n_i}\bigg)
    \end{multlined}\]
according to \Cref{claim:path}.
For simplicity, we can use the bound $\ln(1/s)\leq \ln(1/r_t)$ for $s> 2r_t$. Setting $N\coloneq n_1+ \ldots + n_k$ and simplifying, this yields
\begin{multline}\label{eq:chord bound}
    \EE \c H_{n_1,n_2,n_3}\leq 12 \lambda_{d-1}(S^{d-1}) t^{n-1} \\
    \times (c')^3 \l[\int_0^{2r_t}  r_t^{(N-k)d}  s^{d-1}\dd s +   \int_{2r_t}^{D} r_t^{N\alpha}  s^{(N-k+1)d - N\alpha-1} \dd s\r] 
    (c''r_t^\alpha)^{n-N} \l(\ln\l(1/r_t\r)\r)^{l-k}.
\end{multline}
The order of the second integral depends on the sign of $m\coloneq N(d-\alpha)-(k-1)d$. 
For non-positive values, the sum of integrals is at most of the order $r_t^{(N-k+1)d} \ln(1/r_t)$ (with the logarithmic term only appearing for $m=0$).
As a consequence, 
\[\EE \c H_{n_1,n_2,n_3}\lesssim t^{-1} (tr_t^\alpha)^{n} r_t^{N(d-\alpha) - d(k-1)} \l(\ln\l(1/r_t\r)\r)^4\] 
whenever $m\leq 0$. 
Observe that since $k\leq 2$ and $d-\alpha>0$ is fixed, there are only finitely many triples $(n_1,n_2,n_3)$ with $m\leq 0$, thus we can disregard the constants.

Recall that $n_1\geq 1$, $n_1+n_2\geq 4$ and $n_1+n_2+n_3=n\geq 7$ by construction, and that $d-\alpha>0$. 
This yields that 
\[m\geq \min\{d,d-\alpha,3d-4\alpha,5d-7\alpha\} = \min\{d-\alpha,5d-7\alpha\}\]
for any triple $(n_1,n_2,n_3)$, where the expressions come from each potential value of $k$. This expression can then only be non-positive if $\alpha\geq\frac 57d$, in which case the minimum is exactly $5d-7\alpha \leq 0$.
The assumption $tr_t^{\frac 45 d} \lesssim 1$ of the theorem (or, more precisely, that of ii) at the beginning of the proof) yields $r_t\lesssim t^{-\frac {5}{4d}}$ and $tr_t^\alpha \lesssim t^{1-\frac{5\alpha}{4d}}$, and so for tuples satisfying $m\leq 0$, we have
\begin{align*}
    \EE \c H_{n_1,n_2,n_3}
    & \lesssim t^{-1} (tr_t^\alpha)^7 r_t^{\min\{d-\alpha,5d-7\alpha\}}\l(\ln(1/r_t)\r)^4 \\
    & \lesssim t^{-1+7(1-\frac{5\alpha}{4d})-\frac{5}{4d}(5d-7\alpha)} (\ln(t))^4
    = t^{-\frac 14} (\ln(t))^4.
\end{align*}
This converges to zero as $t\to \infty$, hence such graphs do not appear in the graph with high probability.

Lastly, consider triples with $m>0$, where now we have infinitely many such tuples.
One can see that the sum of integrals in \eqref{eq:chord bound} is of the order $r_t^{N\alpha}$, with the implied constant being at most exponential in $n$. This implies that there exists $c_1,c_2>0$ such that
\[\EE \c H_{n_1,n_2,n_3}\leq c_1 t^{-1}  (\ln(1/r_t))^4 (c_2 tr_t^\alpha)^n\]
for every triple satisfying $N(d-\alpha)-(k-1)d>0$.
To account for every tuple, observe that for any $n$, there are at most $n^2$ positive triples that sum up to $n$, hence
\[\sum_{N(d-\alpha)-(k-1)d>0} \EE \c H_{n_1,n_2,n_3} \lesssim t^{-1}(\ln(t))^4\cdot  \sum_{n=7}^\infty n^2 (c_2 t r_t^\alpha)^n.\]
Choosing $r_t$ such that $\lim_{t\to \infty} c_2 tr_t^\alpha<1$, the series converges and can be bounded by a constant, and convergence to zero follows.

\end{proof}

\section{The max-kernel vertex marked random geometric graph}\label{sec:RR}
{In this section, let $\hat \eta_t$ denote a $\Pareto(r_t,\alpha)$ marked Poisson point process on the convex body $W$: that is, let $\hat \eta_t$ be a Poisson point process on $\R^d \times [0,\infty)$ with intensity measure $t\lambda_d|_W\otimes \mu_{\alpha}$, where $\mu_{\alpha}$ has Lebesgue density given by $f_{\alpha}(x)\coloneq \alpha r_t^\alpha x^{-\alpha-1} \ind(x\geq r_t)$.

We use the following notation for marked point configurations and point processes. For a marked configuration $\hat \zeta\subset \R^d \times [0,\infty)$, write $\zeta$ for the corresponding spatial point configuration $\zeta(\cdot)\coloneq \hat \zeta(\cdot \times [0,\infty))$. 
In addition, for $\hat y \in \hat \zeta$, let $\hat y = (y,R_y)$, that is, denote by $y\in \R^d$ its spatial component, and by $R_y\in [0,\infty)$ the corresponding mark. 
Lastly, throughout the section, we write $\c G_t(\hat \zeta)$ for the graph with vertex set $\zeta$ and edges given by pairs $(y, z)\in \zeta_{\neq}^2$ with $\|y-z\| \leq \max \{R_y,R_z\}$.}

\subsection{Planarity of the max-kernel RGG}

For a graph $\Gamma$ and marked point configuration $\hat \zeta$, write $G_{\Gamma}(\hat \zeta)$ for the number of subgraphs isomorphic to $\Gamma$ in $\c G_t(\hat \zeta)$.
\begin{theorem}[Planarity of the max-kernel marked RGG]\label{thm:RRplanarity}
Let $\hat \eta_t$ be a $\Pareto(r_t,\alpha)$ marked Poisson point process in the convex body $W$, and $\c G_t(\hat \eta_t)$ the random geometric graph as described above.
Then
    \[\lim_{t\to \infty} \PP(\c G_t(\hat \eta_t) \text{ planar}) = 
        \begin{cases}
            1, & \text{if } L_t\to 0,\\            
            0, & \text{if } L_t\to \infty,\\
        \end{cases}\]
    with
    \[L_t \coloneq 
        \begin{cases}
            t (tr_t^d)^{\frac{\alpha}{d} + \ldots + \frac{\alpha^{3}}{d^{3}}},& \text{ if } \alpha< d ,\\
            t (tr_t^d)^{3} \l(\ln \l(\frac{1}{tr_t^d}\r)\r)^2 & \text{ if } \alpha=d,\\
            t (tr_t^d)^{\frac{3\alpha}{d}}, & \text{ if }  d < \alpha \leq \frac 43 d,\\
            t (tr_t^d)^{4}, & \text{ if } \alpha \geq \frac 43 d.\\
        \end{cases}\]
    Further, if $L_t\to \infty$ as $t\to \infty$, then $\PP(G_{K_{3,3}}(\hat \eta_t)\geq 1) \to 1$ if $\alpha \leq \frac 43 d$, and $\PP(G_{K_5}(\hat \eta_t)\geq 1) \to 1$ if $\alpha\geq \frac 43 d$.
\end{theorem}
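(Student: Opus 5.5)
Throughout write $\varepsilon_t\coloneq tr_t^d$; we may assume $\varepsilon_t\to 0$, since otherwise the hard RGG (a subgraph) already contains $K_5$ w.h.p. by \Cref{thm:RGGplanarity}, and $L_t\to\infty$ in every case. The plan follows the proof of \Cref{thm:SRGGplanarity}. For the non-planarity direction we exhibit a specific obstruction, $K_{3,3}$ or $K_5$, and use the second moment method. For planarity we show that w.h.p. no subgraph of a suitable finite family appears, every non-planar graph containing one of them. The heuristic for the exponents is this: in the max-kernel model an edge $y\con z$ only needs \emph{one} endpoint with a large radius. The cheapest way to build $K_{3,3}$ is therefore to let the three vertices of one side be \emph{hubs} with radii $\rho_1\ge\rho_2\ge\rho_3$, and to place the three other vertices within distance $\rho_3$ of all hubs. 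No mark conditions are then needed on the non-hub side.

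\textbf{Expectation of $G_{K_{3,3}}$ and $G_{K_5}$.} By the marked Mecke formula, $\EE G_{K_{3,3}}\approx t^6\int \PP(\text{configuration})$. We order the points by mark and use the same ``restricted maximal length'' recursion as in \eqref{eq:SRGG clique recurrence}, now with the restriction on the \emph{mark}. The first hub has mark $\rho_1$ with density $\approx r_t^\alpha\rho_1^{-\alpha-1}$. The second hub must lie within $\rho_1$ of the first, contributing $t\rho_1^d\cdot \PP(R\ge\rho_2)$, and similarly for the third. The three leaves contribute $(t\rho_3^d)^3$. Integrating successively, $\rho_3$ from $r_t$ to $\rho_2$, then $\rho_2$, then $\rho_1$ up to a constant, produces integrals of power functions. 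In these the lower endpoint dominates exactly when the exponent is negative, which gives the thresholds $\alpha=d$ and $\alpha=\frac43 d$. For $\alpha<d$ the nested integrals produce the geometric exponent $\frac\alpha d+\frac{\alpha^2}{d^2}+\frac{\alpha^3}{d^3}$. The reason is that the optimal hub scale is set by $t\rho^{d}\cdot(r_t/\rho)^\alpha\approx 1$-type balances iterated three times, and each iteration multiplies the exponent by $\alpha/d$; the logarithmic factors appear at $\alpha=d$. For $d<\alpha\le\frac43 d$ one gets $t\varepsilon_t^{3\alpha/d}$, and for $\alpha\ge \frac43 d$ the $K_5$ count with all marks $\approx r_t$ dominates, giving $t\varepsilon_t^4$, as in the hard case. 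Matching lower bounds come from restricting to boxes: marks in $[\rho,2\rho]$, points in balls of radius $\rho$. The variance is handled as in \Cref{thm:SRGGcliques}: the overlap terms are controlled by the same recursion and are $o((\EE G)^2)$ when $L_t\to\infty$. This gives $\PP(G\ge1)\to1$ by Chebyshev and proves the second claim. Here one must take care that the variance is not dominated by a single superhub; if it is, I would count a thinned statistic instead, for instance requiring marks in a dyadic window around the optimal scale.

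\textbf{Planarity when $L_t\to0$; the main obstacle.} Every non-planar graph contains a subdivision of $K_5$ or $K_{3,3}$, but in this model long paths through hubs are cheap, so we cannot simply exclude paths of bounded length. Instead I would argue structurally. Call a vertex \emph{big} if its mark exceeds a threshold $\rho^*$ chosen between scales. Step one shows that w.h.p. the graph restricted to small-mark vertices has no connected component on $5$ vertices, by the HRGG-type count $t(t(\rho^*)^d)^4\to0$. Step two shows that the number of big vertices whose balls mutually interact is small: w.h.p. no three big vertices are pairwise within their radii together with further attachments. Then every $2$-connected block contains at most two hubs plus small pieces. Such a graph is planar, since two hubs with fans of bounded small components can be drawn explicitly. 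The main difficulty is choosing $\rho^*$ (possibly several levels of thresholds when $\alpha<d$, mirroring the three nested exponents) so that both steps hold exactly when $L_t\to0$. I would first try to prove it by bounding the expected number of ``three-hub'' configurations through the same recursion as above, and to show that it is of order $L_t$.
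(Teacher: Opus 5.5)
There is a genuine gap in both directions, and it comes from the same source: in this model, first and second moments of subgraph counts do not detect the threshold $L_t$.

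Vertices with mark of order one are rare, but each is adjacent to every point of $W$. So three vertices with mark at least $\diam(W)$, together with any three further points, already give $\EE G_{K_{3,3}}\gtrsim t^6r_t^{3\alpha}$. For every $\alpha\le\frac43 d$ this tends to infinity when $L_t$ is of constant order; for $d<\alpha\le\frac43 d$ it exceeds $L_t=t^{1+3\alpha/d}r_t^{3\alpha}$ by the factor $t^{5-3\alpha/d}\ge t$. Your own nested integral shows this. After inserting hub densities and ball volumes, the integrands are $\rho_3^{3d-\alpha-1}$, $\rho_2^{4d-2\alpha-1}$ and $\rho_1^{5d-3\alpha-1}$, all dominated by their upper endpoints. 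You therefore obtain $t^6r_t^{3\alpha}$ with transitions at $\frac53 d$, $2d$, $3d$, not $L_t$ with transitions at $d$ and $\frac43 d$. Likewise, pairs of copies sharing the three large-mark hubs give $\EE G_{K_{3,3}}^2\gtrsim t^9r_t^{3\alpha}$, while $\EE G_{K_{3,3}}\asymp t^6r_t^{3\alpha}$. Hence $\EE G_{K_{3,3}}^2/(\EE G_{K_{3,3}})^2\gtrsim (tr_t^\alpha)^{-3}$, which blows up near the threshold, and Chebyshev fails.

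The missing idea is the rigorous form of your ``iterated balance'' heuristic: count only vertices that are the \emph{maximal-mark} vertex of a copy of $\Gamma$ (the paper's $N_\Gamma$). Given such a vertex with mark $x$, the rest of the copy lies in a ball of radius $\lesssim x$, has marks at most $x$, and contains $\Gamma-v$. One then bounds $\PP(N_{\Gamma-v,x}\ge 1)\le\min\{1,\EE N_{\Gamma-v,x}\}$ at \emph{every} level. This truncation yields:
\begin{itemize}
\item the recurrence $b(n)=tr_t^\alpha\, b(n-1)^{\alpha/d}$ for $\alpha<d$;
\item the transition at $\alpha=d$, according to whether $x^{d-\alpha-1}$ concentrates at $r_t$ or at the scale where $\EE N\approx 1$;
\item $M_{t,\alpha}(K_{3,3})=t\varepsilon_t^{3\alpha/d}$ on $(d,\frac53 d)$, whose comparison with $M_{t,\alpha}(K_5)=t\varepsilon_t^4$ produces $\frac43 d$.
\end{itemize}
The paper then proves existence constructively for $\alpha<d$, and by a second moment for $N_\Gamma$ with restricted marks for $\alpha\ge d$. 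Your thinned-statistic fallback points in that direction. However, one dyadic window per hub would lose the factor $(\ln(1/\varepsilon_t))^2$ in $L_t$ at $\alpha=d$, where the contribution is spread over logarithmically many scales.

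For the planarity direction, your structural argument is not a proof: its main step is left open, and its conclusion is false as stated.
\begin{itemize}
\item A long cycle of hubs is $2$-connected and has no three pairwise interacting hubs, so Step two does not limit the number of hubs in a block.
\item Two hubs with bounded small pieces need not be planar. Two hubs adjacent to the three leaves of a small-mark star $K_{1,3}$ form $K_{3,3}$, and two adjacent hubs adjacent to a small-mark triangle form $K_5$.
\end{itemize}
Ruling out such configurations again requires sharp existence bounds. A first moment of ``three-hub configurations'' is distorted exactly as above unless it is truncated at every level.

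Moreover, the premise that bounded-length paths cannot be excluded is not right. In the maximal-mark recursion, deleting any vertex from a path of length $2^{n+1}-2$ leaves a path of length $2^n-2$. So a path of length $30$ exists with probability $\lesssim M_{t,\alpha}(K_5)\lesssim L_t\to 0$. This is the paper's route. With high probability there is no $K_5$, no $K_{3,3}$ and no path of length $30$, which leaves finitely many proper subdivisions to check. Deleting any vertex from one of them leaves a graph containing a subdivision of $K_4$, respectively $K_{2,3}$. The same recursion therefore bounds each existence probability by $\mathcal{O}(L_t)$, and no analysis of hub structures is needed.
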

\begin{remark}\label{remark:RR planarity}
    Additionally to the fact that a $K_{3,3}$ or a $K_5$ appears in the graph with high probability (depending on $\alpha$) whenever $L_t\to \infty$, we can actually show that if the convergence to infinity is slow, these are the only non-planar subgraphs. More precisely, there is a function $L_t^{(u)}$ such that if $L_t\to \infty$, but $L_t = O(L_t^{(u)})$, then no proper subdivision of $K_{3,3}$ or $K_5$ appears, nor a $K_5$ (if $\alpha<\frac 43d$) or $K_{3,3}$ (if $\alpha>\frac 43d$).
\end{remark}
The proof of the theorem is based on explicitly finding the existence probability of arbitrary subdivisions of $K_5$ and $K_{3,3}$. 

\begin{theorem}[Complete subgraphs in the max-kernel marked RGG]\label{thm:RR subgraph}
Let $\hat \eta_t$ be a $\Pareto(r_t,\alpha)$ marked Poisson point process in the convex body $W$. Then for $\Gamma=K_n$ or $\Gamma=K_{n,m}$ for any $1\leq n \leq m$,
    \[\lim_{t\to \infty} \PP(G_{\Gamma}(\hat \eta_t)\geq 1) =
        \begin{cases}
            0, & \caseif M_{t,\alpha}(\Gamma) \to 0,\\
            1, & \caseif M_{t,\alpha}(\Gamma) \to \infty
        \end{cases}
        \]
    holds, where    
    \begin{align*}
        M_{t,\alpha}(K_n) & = 
        \begin{cases}
            t (tr_t^d)^{\frac{\alpha}{d} + \ldots + \frac{\alpha^{n-1}}{d^{n-1}}}, & \caseif \alpha<d,\\
            t (tr_t^d)^{n-1} \l(\ln \l(\frac{1}{tr_t^d}\r)\r)^{n-1}, & \caseif \alpha = d,\\
            t (tr_t^d)^{n-1}, & \caseif \alpha>d,
        \end{cases}\\
    \intertext{and for $m\geq 2$,}
    M_{t,\alpha} (K_{n,m})&= 
        \begin{cases}
           t (tr_t^d)^{\frac{\alpha}{d} + \ldots + \frac{\alpha^{n}}{d^{n}}}, & \caseif \alpha<d,\\
           t (tr_t^d)^{n} \l(\ln \l(\frac{1}{tr_t^d}\r)\r)^{n-1}, & \caseif \alpha = d,\\
           t (tr_t^d)^{\frac{n\alpha}{d}}, & \caseif d < \alpha < d \cdot \frac{n+m-1}{n}, \\
           t (tr_t^d)^{n+m-1} \ln \l(\frac{1}{tr_t^d}\r), & \caseif \alpha = d\cdot \frac{n+m-1}{n},\\
           t (tr_t^d)^{n+m-1}, & \caseif \alpha>d\cdot \frac{n+m-1}{n}.
        \end{cases}
    \end{align*}
\end{theorem}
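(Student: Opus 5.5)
We use the first- and second-moment method on $G_\Gamma(\hat\eta_t)$. Both moments are computed with the marked Slivnyak--Mecke formula, and the dominant configurations are identified by an induction on vertices with restricted marks, in the spirit of the recurrence \eqref{eq:SRGG clique recurrence}.

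\textbf{Expectation.} By Mecke, $\EE G_\Gamma = c\, t^{k}\int \PP(\Gamma\subset \c G_t(\hat y_1,\dots,\hat y_k))$ over positions in $W^k$ and Pareto marks, with $k$ the number of vertices of $\Gamma$. Under the max-kernel, an edge $(y_i,y_j)$ is present iff $\|y_i-y_j\|\le\max\{R_i,R_j\}$. We first order the vertices by mark, $R_{(1)}\ge R_{(2)}\ge\cdots$. Integrating out the positions then gives a volume factor of roughly the following form: each vertex $y_{(j)}$ must lie within distance $R_{(i)}$ of every earlier neighbour $y_{(i)}$, $i<j$. It therefore lives in a ball of radius $\min\{c,\min_{i<j,\, i\sim j} R_{(i)}\}$, and since marks are decreasing this is essentially $R_{(j')}^d$, where $j'$ is the last earlier neighbour.

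For $K_n$ this yields the integral
\[t^n\int \prod_{j=2}^n \min\{1,R_{(j-1)}^d\}\prod_j f_\alpha(R_{(j)})\,\dd R.\]
For $K_{n,m}$ with $n\le m$, there are two competing ways of realising the graph. In the first, the $n$ small-side vertices carry the large marks and act as hubs, each covering all $m$ partners, and the hub marks are nested. In the second, all $n+m$ vertices are short-range, which is the Gilbert-type regime contributing $t(tr_t^d)^{n+m-1}$.

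The mark integrals are evaluated inductively: the largest mark is integrated last, and we apply the Pareto tail $\PP(R\ge s)=r_t^\alpha s^{-\alpha}$. One also has to cut off the mark at the scale where the expected number of points in $B(y,R)$ is of order $1$, since the other vertices must actually exist. This cut-off is the source of the $tr_t^d$ combinations and of the exponents $\alpha/d+\dots+\alpha^{n-1}/d^{n-1}$. Concretely, the $j$-th hub needs a mark of size about $R_{(j+1)}$ rescaled, and iterating $s\mapsto s^{\alpha/d}$ produces the geometric sum. For $\alpha>d$ the integrals are dominated by marks of order $r_t$, and for $\alpha=d$ we pick up the logarithmic factors. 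In each case we need to show $\EE G_\Gamma=\Theta_t(M_{t,\alpha}(\Gamma))$. Wherever $\EE G_\Gamma$ itself is not the right quantity, because rare huge hubs inflate the mean, we instead count a truncated version $\tilde G_\Gamma$ with marks restricted to the typical range, and we check that $\EE\tilde G_\Gamma\approx M_{t,\alpha}$.

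\textbf{Zero regime.} If $M_{t,\alpha}\to0$, we apply Markov's inequality to the truncated count. We then separately show that the probability of having any vertex with a mark above the cut-off scale, together with the required neighbours, also tends to $0$. This is again a first-moment bound, using the Pareto tail.

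\textbf{One regime.} If $M_{t,\alpha}\to\infty$, we use Chebyshev on $\tilde G_\Gamma$. The variance expands as in the proof of \Cref{thm:SRGGcliques} into a sum over overlap patterns: pairs of copies sharing a set $S$ of vertices. Each term is bounded by the same inductive mark computation applied to the union graph. The key point is that the union of two copies, with the shared vertices counted once, satisfies $\EE[\text{pairs sharing }S]\lesssim (\EE\tilde G)^2/\EE\tilde G_{\Gamma_S}$, where $\Gamma_S$ is the subgraph induced on $S$. We then need $\EE\tilde G_{\Gamma_S}\to\infty$ for every nonempty $S$, i.e.\ that $\Gamma$ is "balanced" in this weighted sense.

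\textbf{Main obstacle.} The balance property is what we expect to be hardest. Shared hub vertices are the main danger, since a single superhub is shared by many copies. We will handle this by the truncation: restricting marks to the scale that dominates the expectation makes hubs not too large, so each hub carries only a bounded number of copies on average. We then verify, pattern by pattern (finitely many for fixed $n,m$), that the exponent of $t$ in each overlap term is strictly less than $2\times$ that of $M_{t,\alpha}$ whenever $M_{t,\alpha}\to\infty$.
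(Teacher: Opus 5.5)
Your first- and second-moment strategy is reasonable in outline; the paper itself uses second moments for $\alpha\ge d$. But there are two genuine gaps, and they sit exactly where the content of the theorem lies.

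First, everything rests on a truncated count $\tilde G_\Gamma$ with $\EE\tilde G_\Gamma\asymp M_{t,\alpha}(\Gamma)$, but $\tilde G_\Gamma$ is never defined. The one concrete prescription you give, cutting marks where $B(y,R)$ contains order-one points (i.e.\ at $t^{-1/d}$), is wrong as soon as $n\ge3$. For $K_3$ with $\alpha<d$:
\begin{itemize}
\item cutting all marks at $t^{-1/d}$ gives mean $\asymp t(tr_t^d)^{2\alpha/d}=o(M_{t,\alpha}(K_3))$;
\item cutting all marks at the next scale $\rho_1=M_{t,\alpha}(K_2)^{-1/d}$ gives $\asymp t(tr_t^d)^{2\alpha^2/d^2}\gg M_{t,\alpha}(K_3)$, because the second-largest mark is then free to be as large as $\rho_1$ and inflates the count inside the hub's ball.
\end{itemize}
Only a rank-dependent truncation reproduces $M_{t,\alpha}(K_3)$: largest mark $\lesssim\rho_1$, second-largest $\lesssim t^{-1/d}$. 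For $K_n$ in general, the $j$-th largest mark must lie below $\rho_j$ with $\rho_j^{-d}\asymp M_{t,\alpha}(K_{n-j})$. Your remark about iterating $s\mapsto s^{\alpha/d}$ points this way but is never turned into a definition.

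The zero regime then needs a separate first-moment bound for each level at which a copy can escape the truncation. With a single cut at $t^{-1/d}$, the escape event is not controlled by the number of vertices of mark above $t^{-1/d}$. There are about $M_{t,\alpha}(K_2)$ of them, which may diverge while $M_{t,\alpha}(K_3)\to0$. Bounding the probability that such a vertex tops a triangle needs $\PP(\text{its ball contains an edge})\le\min\{1,\cdot\}$.

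That is precisely the device you are missing. The paper counts only the vertex carrying the largest mark of a copy, $N_{\Gamma,s}$. Then $\PP(G_\Gamma(\hat\xi_{t,s})\ge1)=\PP(N_{\Gamma,s}\ge1)$, and
\[\EE N_{\Gamma,s}\lesssim\sum_v ts^d\int_{r_t}^s f_\alpha(x)\min\{1,\EE N_{\Gamma-v,kx}\}\,\dd x.\]
The saturation at $1$ converts the naive order $t^nr_t^{\alpha(n-1)}$ of $\EE G_{K_n}$ into the recursion $b_{1,\alpha}(n)=tr_t^\alpha\, b_{1,\alpha}(n-1)^{\alpha/d}$, at all scales simultaneously.

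Second, your variance principle $\EE[\text{pairs sharing }S]\lesssim(\EE\tilde G_\Gamma)^2/\EE\tilde G_{\Gamma_S}$ is the Erd\H{o}s--R\'enyi factorisation. It fails here because sharing $S$ forces geometric proximity and atypical marks that the count of $\Gamma_S$ does not see.
\begin{itemize}
\item Take $K_{1,m}$ with $m\ge2$, $\alpha<d$, hub marks of order $t^{-1/d}$, $M=M_{t,\alpha}(K_{1,m})$, and $S$ the $m$ leaves.
\item Then $\Gamma_S$ has no edges and $\EE\tilde G_{\Gamma_S}\asymp t^m$, so your bound is $M^2/t^m$.
\item But a leaf set covered by one hub lies in about $r_t^\alpha t^{\alpha/d}=M/t$ further hub balls, so the true count is $\asymp M^2/t$, larger by a factor $t^{m-1}$.
\end{itemize}
Chebyshev survives there only because $M^2/t=o(M^2)$. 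So ``$\EE\tilde G_{\Gamma_S}\to\infty$ for all $S$'' is not the right criterion. You must bound extension counts conditionally on the positions and marks of the shared vertices. Also, comparing exponents of $t$ is meaningless for general $r_t$.

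The paper does no global overlap bookkeeping.
\begin{itemize}
\item For $\alpha<d$ it argues constructively. It picks $\rho_t$ with $ts_t^dr_t^\alpha\rho_t^{-\alpha}\to\infty$ and $\rho_t^d b_{1,\alpha}(n-1)\to\infty$. Then there are many hubs of mark at least $\rho_t$, and, independently of them, each hub's $\rho_t$-ball contains a copy of $K_n$ with marks at most $\rho_t$ with probability tending to one. This gives $\EE H^2\le\EE H+\EE H\,\EE H'$.
\item For $K_n$ with $\alpha\ge d$ it uses the second moment of the top-mark count with marks in $[2r_t,2t^{-1/d}]$. There the overlap term reduces to a first moment of a larger clique, because a vertex of larger mark cannot affect whether a smaller-mark vertex is the top of a copy.
\end{itemize}
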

Note that for $n=m=1$, we have $M_{t,\alpha}(K_{1,1})=M_{t,\alpha}(K_2)$; it does not exactly fit into the general definition for $K_{n,m}$, because the cases in $\alpha$ collapse.

 
In addition, we have to control the proper subdivisions of $K_5$ and $K_{3,3}$.

\begin{lemma}\label{lemma:RR subdivisions} 
Let $P_n$ denote the path of length $2^n-2$ for some $n\geq 1$, in particular $P_1$ the one-vertex graph. Then $\EE G_{P_n}(\hat \eta_t)\lesssim M_{t,\alpha}(K_n)$. 
Further, if $\Gamma'$ is a proper subdivision of $\Gamma = K_n$ or $\Gamma= K_{n,m}$, then $\EE G_{\Gamma'}(\hat \eta_t) \lesssim M_{t,\alpha}(\Gamma)$.
\end{lemma}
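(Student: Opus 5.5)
The plan is to bound the expected subgraph counts with the Mecke formula and to integrate over vertex marks and positions in a carefully chosen order. The structure will mirror the expectation computation behind \Cref{thm:RR subgraph}, whose form of $M_{t,\alpha}(K_n)$ we take as given.

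The key heuristic is the max-kernel. An edge $y\con z$ exists iff $\|y-z\|\le\max\{R_y,R_z\}$. Hence in any copy of a subgraph, every edge can be \emph{oriented} towards an endpoint whose radius covers it. Summing over all orientations (finitely many, so only constants are lost) we get
\[\EE G_{\Gamma'}\lesssim \sum_{\text{orientations}} t^{|V(\Gamma')|}\int \prod_{(u\to v)} \ind(\|u-v\|\le R_v)\,\prod_v f_\alpha(R_v)\,\dd y\,\dd R .\]
For a fixed orientation, each vertex $v$ with in-degree $k_v$ contributes the integral over the positions of the vertices pointing to it. We integrate the positions of the vertices in a suitable order (leaves and low-degree vertices first), with each $R_v$ truncated at $\diam W$.

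\textbf{Paths.} For $P_n$ (with $2^n-1$ vertices), the idea is a halving induction. Split the path at its middle vertex and at the two vertices bounding the halves. Each half is a copy of $P_{n-1}$ hung off a vertex. In the count for $K_n$ recalled from \Cref{thm:RR subgraph}, the exponents $\frac{\alpha}{d}+\dots+\frac{\alpha^{n-1}}{d^{n-1}}$ arise from a recurrence $M(K_{n})\approx \int t\,(t R^d)\ldots$ in which one new vertex must be covered by a large radius of order $(tr_t^d)^{-\cdots}$.

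I would prove the general statement $\EE G_{\Gamma'}\lesssim$ (the same integral recurrence as for $\Gamma$) by a contraction argument. Replace each subdivided edge $u$--$w_1$--$\dots$--$w_k$--$v$ by a single edge $u$--$v$. Then show that integrating out the internal vertices $w_i$ costs at most a factor $\lesssim 1$. Each internal vertex carries a factor $t$ from Mecke and a factor of at most $t\cdot\EE[\kappa_d\max\{R,R'\}^d\wedge 1]$ from its incident edge. In the regime where $M_{t,\alpha}(\Gamma)$ is not already huge we have $tr_t^d\to0$, and $t\,\EE[R^d\wedge \mathrm{const}]$ is either $O(tr_t^d)$ (for $\alpha>d$), $O(tr_t^d\ln)$ (for $\alpha=d$), or $O(tr_t^\alpha)$ (for $\alpha<d$). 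In each case this is small relative to what the contracted edge already costs.

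The delicate point is that a chain of internal vertices can realise a long connection $u$--$v$ more cheaply than a single edge, because many moderately large radii may beat one huge radius. We must therefore show that a path of length $k$ between points at distance $s$ has density at most a constant times the single-edge probability $\PP(s\le\max\{R_u,R_v\})$, up to factors that vanish. This is exactly why the statement uses paths of length $2^n-2$ compared to $K_n$: the dyadic splitting lets each level of the halving recursion be compared with one step of the clique recurrence.

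\textbf{Main obstacle.} The hard step is this path-versus-edge comparison in the heavy-tailed case $\alpha<d$. There the optimal way to connect distant points is through a hub with a large radius, and the hub can serve several subdivision paths at once. My first attempt would be to classify each vertex by its radius level. At the top level, a vertex whose radius covers the whole configuration plays the role of a principal vertex. Each remaining path is then estimated by the recursion $\int_0^{D}(\text{cover prob.})\,x^{d-1}\dd x$, in the same way as in the proof of \Cref{claim:path}. Finally the resulting exponents are compared with those of $M_{t,\alpha}(\Gamma)$, checking that every extra internal vertex multiplies the bound by $t\cdot(\text{volume reachable})\lesssim 1$.
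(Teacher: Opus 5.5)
\textbf{There is a genuine gap, and it lies in the quantity you expand.} You set out to bound $\EE G_{\Gamma'}$, the expected number of copies, by a full Mecke expansion over edge orientations, and then to contract the subdivided paths. In the max-kernel model this copy count is inflated by rare hubs, and the bound you are aiming at does not hold.

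A concrete failure: the expected number of vertices with mark of order one is of order $tr_t^\alpha$, and each such vertex is adjacent to order $t$ points. Hence $\EE G_{P_2}\gtrsim t^3r_t^\alpha$. For $\alpha<d$, on the other hand, $M_{t,\alpha}(K_2)=t^{1+\alpha/d}r_t^\alpha$, and the ratio $t^{2-\alpha/d}$ diverges for every choice of $r_t$. Similar failures occur for $\alpha=d$, and for $d<\alpha<2d$ in part of the regime $tr_t^d\to0$. This is exactly the distortion the paper warns about before introducing $N_{\Gamma,s}$.

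Your contraction step breaks for the same reason, as you note yourself. An internal vertex $w$ with $R_w\asymp s$ joins two points at distance $s$ at cost about $tr_t^\alpha s^{d-\alpha}$. This exceeds the single-edge probability $\asymp r_t^\alpha s^{-\alpha}$ by the factor $ts^d$. Your last paragraph only sketches a remedy (radius levels) and does not carry it out. So the proposal leaves its main step open, and no refinement of a first-moment bound on copy counts could close it.

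\textbf{What the paper's argument actually controls.} It bounds $N_{\Gamma',1}$, the number of vertices carrying the largest mark in some copy of $\Gamma'$, and this is all that \Cref{thm:RRplanarity} needs. The link is $\PP(G_{\Gamma'}\ge1)=\PP(N_{\Gamma',1}\ge1)\le\EE N_{\Gamma',1}$, after reducing from $\hat\eta_t$ to $\hat\xi_{t,1}$; the lemma should be read in this sense.

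For this quantity no path-versus-edge comparison is needed. In the recurrence \eqref{eq:RR subgraph induction}, the inner factor satisfies $\PP(N_{\Gamma'-v,kx}\ge1)\le\min\{1,\EE N_{\Gamma_0,kx}\}$ for any subgraph $\Gamma_0\subseteq\Gamma'-v$. The truncation at $1$ is precisely what removes the hub inflation.

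\textbf{The missing idea} is then a combinatorial deletion property, required for \emph{every} vertex $v$, since the maximal-mark vertex can sit anywhere:
\begin{itemize}
\item $P_{n+1}-v$ contains $P_n$, because a path on $2^{n+1}-1$ vertices with one vertex deleted has a component with at least $2^n-1$ vertices.
\item If $\Gamma'$ is a proper subdivision of $K_{n+1}$, then $\Gamma'-v$ contains a subdivision of $K_n$. To see this, discard $v$ if it is principal, and otherwise an endpoint of the branch path through $v$, together with its branch paths. The case of $K_{n,m}$ is analogous.
\end{itemize}
The recurrence for $\Gamma'$ is then dominated, up to constants depending only on $|V(\Gamma')|$, by the one taking $K_n$ to $K_{n+1}$. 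Induction from $P_1=K_1$ gives $\EE N_{\Gamma',s}\lesssim I_{\alpha,s}(K_{n+1})$.

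Your halving at the middle vertex is in the right spirit but is not the property used. What matters is that deleting an \emph{arbitrary} vertex leaves a $P_n$, and this is why the lengths $2^n-2$ appear.
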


\begin{proof}[Proof of \Cref{thm:RRplanarity}]
First, observe that 
\[L_t = 
    \begin{cases}
        M_{t,\alpha}(K_{3,3}),& \caseif \alpha\leq \frac 43d,\\
        M_{t,\alpha}(K_5),& \caseif \alpha \geq \frac 43d,
    \end{cases}\]
with $\alpha=\frac 43d$ indeed having $M_{t,\alpha}(K_5)=M_{t,\alpha}(K_{3,3})$.
For non-planarity, this immediately yields by \Cref{thm:RR subgraph} that if $L_t\to \infty$, the existence probability of a $K_{3,3}$, or $K_5$ subgraph (depending on $\alpha$) converges to one, and so the graph is non-planar with high probability.

Assume now that $L_t\to 0$. Then we must have $tr_t^d\to 0$, and so $M_{t,\alpha}(K_5)  = o_t(M_{t,\alpha}(K_{3,3}))$ if $\alpha < \frac 43d$, and similarly $M_{t,\alpha}(K_{3,3}) = o_t(M_{t,\alpha}(K_5))$ if $\alpha > \frac 43d$.
This yields that in the regime $L_t\to 0$, no $K_5$ or $K_{3,3}$ appears with high probability.
We additionally need to check all proper subdivisions of $K_5$ and $K_{3,3}$.
First, observe that if $L_t\to 0$, no path of length $2^5-2=30$ appears in the graph with high probability by \Cref{lemma:RR subdivisions}. Further, there are only finitely many proper subdivisions of $K_5$ and $K_{3,3}$ with no path of length $30$; for any such subdivision $\Gamma'$, we have $\EE G_{\Gamma'}(\hat \eta_t) = O(L_t)$ by \Cref{lemma:RR subdivisions}. This yields the statement of the theorem.

Note that \Cref{remark:RR planarity} can be made precise by showing a stronger statement than \Cref{lemma:RR subdivisions} for subdivisions, namely that for appropriate $\alpha$ and $r_t$, $\EE G_{\Gamma'}(\hat \eta_t)= o (M_{t,\alpha}(\Gamma))$ for any proper subdivision $\Gamma'$ of a complete (bipartite) graph $\Gamma$.
\end{proof}

\begin{proof}[Proof of \Cref{thm:RR subgraph} and \Cref{lemma:RR subdivisions}.]

Throughout the proof, we may assume that $tr_t^d, tr_t^\alpha\to 0$ as $t\to \infty$; if the threshold functions $M_{t,\alpha}(\cdot)= \Theta_t(1)$, this is satisfied, and the general statement (i.e. for larger $r_t$) follows from a monotonicity argument. 

First, let $\hat\xi_t$ be a $\Pareto(r_t,\alpha)$ marked homogeneous Poisson point process on $B^d$. This is comparable to $\hat \eta_t$, in a sense that the existence probability of a subgraph in $\c G_t(\hat \eta_t)$ and $\c G_t(\hat \xi_t)$ is determined by the same threshold function. This simply follows from the fact that $W$ is contained between two balls, and that the model is not sensitive to scaling by a constant; a short discussion was given in the proof of \Cref{thm:SRGGcliques} (see the argument following \eqref{eq:ball sandwich}).

Secondly, for $0<s\leq 1$, define the point process 
$\hat \xi_{t,s}\coloneq\hat \xi_t \big |_{sB^d\times [0,s]}$, that is, we restrict the location of the points, as well as the maximal value of the marks. We then aim to understand the probability $\PP(G_{\Gamma}(\xi_{t,s})\geq 1)$, and in particular, find threshold functions similarly to the overall results, now with a potential dependence on $s=s_t$.
This yields the original statement as follows.
On the one hand, $\PP(G_{\Gamma}(\hat \xi_t)\geq 1)\geq \PP(G_{\Gamma}(\hat \xi_{t,1})\geq 1)$. 
On the other hand, observe that $\hat \xi_t(B^d\times (1,\infty))$ is a Poisson distributed random variable with parameter $\kappa_d t\mu_\alpha((1,\infty)) = \kappa_dt r_t^\alpha$. As a consequence,
\begin{align}\label{eq:no large marks}
\begin{split}
\PP(G_{\Gamma}(\hat \xi_t)\geq 1) 
& \leq \PP(G_{\Gamma}(\hat \xi_{t,1})\geq 1) + \PP(\hat \xi_t \neq \hat \xi_{t,1}) 
= \PP(G_{\Gamma}(\hat \xi_{t,1})\geq 1)  +\PP(\exists \hat y\in \hat \xi_t\colon R_y>1) \\
&\leq \PP(G_{\Gamma}(\hat \xi_{t,1})\geq 1) + \kappa_d tr_t^\alpha =  \PP(G_{\Gamma}(\hat \xi_{t,1})\geq 1) +  o_t(1).
\end{split}
\end{align}
In other words, the threshold function of a subgraph existence for $\hat \xi_t$ and $\hat \xi_{t,1}$ is the same.

Now, one might consider simply counting subgraphs isomorphic to $\Gamma$, and finding the existence probability using expectation and variance results of $G_{\Gamma}(\hat \xi_{t,s})$. However, the expected number of subgraphs is highly distorted: this is because a few large marks, even if they appear with very low probability, can produce many subgraphs at once. We can eliminate this tendency by counting vertices with large marks contained in certain kind of subgraph, instead of counting the subgraphs themselves. 
Formally, consider the following.
For a graph $\Gamma$ of order $m$, real number $s > 0$, marked point configuration $\hat \zeta$, and $\hat y\in \hat \zeta$, let $X_{\Gamma}(\hat y,\hat \zeta)$ be the indicator of the event that there are points $\hat y_2,\ldots, \hat y_{m}\in \hat \zeta$ such that $\c G_t(\{\hat y, \hat y_2,\ldots, \hat y_{m}\})$ contains $\Gamma$ as a subgraph, and $R_{y_2},\ldots, R_{y_{m}}\leq R_{y}$. That is, $X_{\Gamma}(\hat y,\hat \zeta)=1$ whenever $\hat y$ is contained in a subgraph isomorphic to $\Gamma$ in $\c G_{t}(\hat \zeta)$, where the mark of $y$ is largest (among the vertices forming the $\Gamma$-subgraph). Note that we do not require that the points induce exactly $\Gamma$, i.e. additional edges are allowed.
Write 
\begin{equation}\label{eq:RR score}
F_\Gamma(\hat \zeta) \coloneq \sum_{\hat y\in \hat\zeta} X_{\Gamma}(\hat y,\hat \zeta)
\end{equation}
for the total number of such points $\hat y$, and in particular let $N_{\Gamma,s}\coloneq F_{\Gamma}(\hat \xi_{t,s})$.
By construction, $\PP(G_{\Gamma}(\hat \xi_{t,s})\geq 1 ) = \PP(N_{\Gamma,s}\geq 1)$. 
It turns out that unlike the subgraph count $G_{\Gamma}$, the vertex-type count $N_{\Gamma,1}$ is a good indicator for the existence of a subgraph.
Throughout the proof, we consider the probability $\PP(N_{\Gamma,s}\geq 1)$, and find the appropriate threshold function using first- and second moment bounds on $N_{\Gamma,s}$.

For the non-existence of a subgraph, it is sufficient to show, by Markov's inequality, 
that $\EE N_{\Gamma,1}\to 0$.
We begin by considering a general strategy to evaluate $\EE N_{\Gamma,s}$, using induction on the number of vertices. If $\Gamma$ is a single point, $N_{\Gamma,s}$ is simply the number of points in $\hat \xi_{t,s_t}$, which is a Poisson random variable by construction, with parameter $\kappa_d s_t^d \mu_{\alpha}([0,s_t])$.

Now, let $\Gamma$ be a connected graph on $k\geq 2$ vertices, and $0<s\leq 1$. By the Mecke formula, 
\[\EE N_{\Gamma, s}= t\int_{sB^d} \int_{r_t}^{s} f_\alpha(x) \PP\Big(X_{\Gamma} \big((y,x),\hat \xi_{t,s}\cup\{(y,x)\}\big)=1\Big) \dd x \dd y.\]

Set $\hat y \coloneq (y,x)$.
Now, if $\c G_t(\{\hat y, \hat y_2,\ldots, \hat y_k\})$ contains $\Gamma$ as a subgraph for some $\hat y_2,\ldots, \hat y_k\in \hat \xi_{t,s}$, then by the connectedness of $\Gamma$, the maximal distance between their spatial components is at most $k$ times the maximal mark among them. By definition of $X_{\Gamma}(\cdot)$, the maximal mark must be exactly $x$. As a consequence, if the indicator is one, then $\c G_t(\hat \xi_{t,s}\cap (y + kx B^d))$ contains a graph isomorphic to $\Gamma-v$ for some vertex $v\in V(\Gamma)$, where each of the marks is at most $x$. 
We obtain an upper bound by allowing all of the points to be in $y+kxB^d$, and have marks up to $kx$. This makes the existence probability independent of $y$ (but not the value of the mark $x$). 
For a lower bound, we can restrict $y$ to $\frac 12 s B^d$ and $x$ to $[2r_t, s/2]$, and only allow the rest of the points to be in $y+xB^d\subset s B^d$. This gives 
    \begin{equation}\label{eq:RR subgraph induction}
        \sum_{v\in V(\Gamma)} t s^d \int_{2r_t}^{s/2} f_\alpha(x) \PP(N_{\Gamma-v,x}\geq 1) \dd x 
        \lesssim \EE N_{\Gamma,s} 
        \lesssim \sum_{v\in V(\Gamma)} t s^d \int_{r_t}^{s} f_\alpha(x) \PP(N_{\Gamma-v,kx}\geq 1) \dd x.
    \end{equation}
    The reason for restricting $x\geq 2r_t$ in the lower bound will be explained later on.
    Throughout the proof, we will let $(s_t)_{t>0}$ be monotone decreasing with $s_t\in [r_t,1]$.
    
    This is the inductive relationship in its most general form. 
    For an upper bound, we can use Markov's inequality, or more precisely, $\PP(N_{\Gamma-v,kx}\geq 1)\leq \min \{1, \EE  N_{\Gamma-v,kx}\}$.
    To lower bound the existence probability, we additionally need to control the second moment of $N_{\Gamma-v,x}$: we use this approach for $\alpha\geq d$. 
    If $\alpha<d$, we give an alternative, more constructive proof for the lower bound.
    
    To make the computations more concise, introduce the following notation for the expressions appearing in $M_{t,\alpha}(\cdot)$:
    \begin{align}\label{eq:b}
    \begin{split}
        b_{1,\alpha}(n)& \coloneq t (tr_t^d)^{\frac{\alpha}{d} + \ldots + \frac{\alpha^n}{d^n}} = t^{\frac{\alpha^n}{d^n}} (tr_t^\alpha)^{1 + \frac{\alpha}{d} + \ldots + \frac{\alpha^{n-1}}{d^{n-1}}},\\
        b_{2,\alpha}(n) & \coloneq t (tr_t^d)^{\frac{n\alpha}{d}}, \\
        b_3(n) & \coloneq t(tr_t^d)^n
    \end{split}
    \end{align}
    for all $n\geq 0$, where $\frac{\alpha}{d}+\ldots \frac{\alpha^n}{d^n} \coloneq 0$ if $n=0$ by convention of the empty sum, and in particular $b_{1,\alpha}(0)=t$. Note that in order to not overwhelm the already busy notation, we omit the dependence on $t$ and $r_t$.

    \subsection*{The complete graph $K_n$}

    \paragraph{The upper bound.}
    
    We claim that $\EE N_{K_n,s_t}\lesssim I_{\alpha,s_t}(K_n)$ with $I_{\alpha,s_t}(K_n)$ given by the following table:
    \begin{equation}\label{eq:RR upper bound}
    \begin{tabular}{c||C{4.5cm}|C{4.5cm}}
     & $s_t \leq t^{-\frac 1d}$ & $s_t > t^{-\frac 1d}$\\
    \hline
    \hline
    $\alpha < d$ & \multicolumn{2}{c}{$s_t^d b_{1,\alpha}(n-1)$}   \\
    \hline
    $\alpha = d$ & $s_t^d \l(\ln \frac{s_t}{r_t}\r)^{n-1} b_3(n-1)$ & $s_t^d b_3(n-1) \l(\ln \frac{1}{tr_t^d}\r)^{n-1}$ \\
    \hline
    $\alpha > d $ & \multicolumn{2}{c}{$s_t^d b_3(n-1)$} 
    \end{tabular}
    \end{equation}
    For the base case, $N_{K_1,s_t}$ is Poisson with parameter $\kappa_d s_t^d \mu_{\alpha}([0,s_t])$. Thus we obtain $\EE N_{K_1,s_t}\lesssim ts_t^d$, where we upper bounded the term $\mu_{\alpha}([0,s_t])$ (coming from the restriction on the marks) trivially by $1$. 
    Now, fix $n\geq 1$, and assume the statement holds for $K_n$ and all $s_t$.
    Using \eqref{eq:RR subgraph induction} and the induction hypothesis, we have for $\alpha\neq d$ that
    \[\EE N_{K_{n+1},s_t}\lesssim t s_t^d \int_{r_t}^{s_t} f_\alpha(x) \min \{1, \EE N_{K_{n}, (n+1)x}\} \dd x \lesssim s_t^d t r_t^\alpha \int_{r_t}^{s_t} x^{-\alpha-1} \min \{1, x^d b(n-1)\} \dd x\]
    where $b=b_{1,\alpha}$ if $\alpha < d$ and $b=b_3$ if $\alpha>d$. 
    For $\alpha<d$, split the integral into two parts at $(b_{1,\alpha}(n-1))^{-\frac 1d}$, where the upper bound for the expectation surpasses one.
    As we a priori do not know where $r_t$ and $s_t$ fall compared to this value, we expand the integral to the entirety of the real line, and obtain
    \begin{align*}
        \EE N_{K_{n+1},s_t}
        & \lesssim s_t^d t r_t^\alpha \l[ \int_{0}^{(b_{1,\alpha}(n-1))^{-\frac 1d}} x^{d-\alpha-1} b_{1,\alpha}(n-1) \dd x + \int_{(b_{1,\alpha}(n-1))^{-\frac 1d}}^\infty x^{-\alpha-1} \dd x \r] \\
        & \lesssim s_t^d t r_t^\alpha \l[ b_{1,\alpha}(n-1) \big[ x^{d-\alpha} \big]_0^{(b_{1,\alpha}(n-1))^{-\frac 1d}}  - \big[ x^{-\alpha}\big]_{(b_{1,\alpha}(n-1))^{-\frac 1d}}^\infty \r] \\
        & \lesssim s_t^d t r_t^\alpha \, (b_{1,\alpha}(n-1))^{\frac{\alpha}{d}}.
    \end{align*}
    Note that since $d-\alpha>0$, the first integral is dominated by contributions around the upper limit, and the second integral around its lower limit. Hence the main contribution overall comes from marks in the neighbourhood of $(b_{1,\alpha}(n-1))^{-\frac 1d}$. 
    Now, observe that the sequence of functions $b_{1,\alpha}(\cdot)$ given in \eqref{eq:b} satisfies the recurrence relation $b_{1,\alpha}(n) = tr_t^\alpha(b_{1,\alpha}(n-1))^{\frac{\alpha}{d}} $, and is in fact defined by it together with $b_{1,\alpha}(0)=t$. As a consequence, $\EE N_{K_{n+1},s_t}\lesssim s_t^d b_{1,\alpha}(n)$.
    
    For $\alpha>d$, upper bound by $I_{\alpha,x}(K_n)$ for all values of $x$, and obtain
    \[ \EE N_{K_{n+1},s_t} (\hat \xi_{t,s_t}) \lesssim s_t^d t r_t^\alpha \int_{r_t}^{s_t} b_3(n-1) x^{d-\alpha-1} \dd x\lesssim s_t^d tr_t^\alpha b_3(n-1) r_t^{d-\alpha} = s_t^d tr_t^db_3(n-1),\]
    as leading contribution comes from small values of $x$ (namely, around $r_t$).
    The sequence of functions $b_3(\cdot)$ given in \eqref{eq:b} is determined by the recurrence relation $b_3(n) = t r_t^db_3(n-1) $, together with the initial case $b_3(0) = t$. The expectation is thus upper bounded by $s_t^d b_3(n)$.
    
    Due to the logarithmic factor, the case of $\alpha = d$ requires a little more work. For $r_t\leq s_t\leq t^{-\frac 1d}$, 
    \begin{align*}
        \EE N_{K_{n+1},s_t}
        & \lesssim s_t^d tr_t^\alpha \int_{r_t}^{s_t} \frac 1x \l( \ln \l(\frac{x}{r_t}\r)\r)^{n-1} b_3(n-1)\dd x \approx   s_t^d tr_t^d b_3(n-1) \l[\l( \ln \l(\frac{x}{r_t}\r)\r)^{n}\r]_{r_t}^{s_t} \\
        & = s_t^d tr_t^d b_3(n-1) \l( \ln \l(\frac{s_t}{r_t}\r)\r)^{n} =  s_t^d b_3(n) \l( \ln \l(\frac{s_t}{r_t}\r)\r)^{n}.
    \end{align*}
    For $s>t^{-\frac 1d}$, set $\beta=\beta(n-1)\coloneq b_3(n-1) \l(\ln \l(\frac{1}{tr_t^d}\r)\r)^{n-1}$ 
    to obtain
    \begin{align*}
        \EE &N_{K_{n+1},s_t}\\
        &\lesssim s_t^d tr_t^d \l[b_3(n-1) \int_{r_t}^{t^{-\frac 1d}} \frac 1x \l( \ln \l(\frac{x}{r_t}\r)\r)^{n-1} \dd x  + \int_{t^{-\frac 1d}}^{s_t} x^{-d-1} \min\l\{1, \beta(n-1) x^d\r\} \dd x\r]\\
        & \lesssim s_t^d tr_t^d \l[b_3(n-1) \l(\ln \l(\frac{1}{tr_t^d}\r)\r)^n  + \beta(n-1) \int_{t^{-\frac 1d}}^{\beta^{-\frac 1d}} x^{-1} \dd x  + \int_{\beta^{-\frac 1d}}^\infty x^{-d-1} \dd x \r]\\
        & \lesssim s_t^d tr_t^d \l[ \beta (n-1) \l(\ln \l(\frac{1}{tr_t^d}\r)\r) + \beta(n-1) \ln \l(\frac{t}{\beta(n-1)}\r) + \beta(n-1)\r].
    \end{align*}
    Note that since $tr_t^d\to 0$, we have $\beta(n-1)/t = (tr_t^d)^{n-1} (\ln(1/(tr_t^d))^{n-1}\to 0$, and hence the way the integral was split is sensible. Additionally, this yields that $\ln(t/\beta(n-1))\approx \ln(1/(tr_t^d))$. It follows that the second summand is of the same order as the first, and the third summand is smaller.
    We thus have
    \[\EE N_{K_{n+1},s_t} 
    \lesssim s_t^d tr_t^d \beta(n-1) \ln \l(\frac{1}{tr_t^d}\r) 
    = s_t^d b_3(n) \l(\ln \l(\frac{1}{tr_t^d}\r)\r)^n.\]
    This completes the proof of the upper bound for the complete graph.

    \paragraph{The lower bound.}

    To prove the existence of a given complete graph in $\c G_t$, we showcase two different approaches. One is a standard second moment method proof, the other, which can be used when the model is dominated by large marks, is more constructive. We start with the latter; for this approach to be applicable, let $\alpha<d$.
    The idea is the following: we work inductively, showing that there exists a vertex in $\hat \xi_{t,s_t}$ that has a sufficiently large mark, and that a vertex with such a large mark is contained in a complete $n$-subgraph with high probability.
    
    To be precise, we claim that if $s_t^d b_{1,\alpha} (n-1)\to \infty$, then $\PP(N_{K_n,s_t}\geq 1)\to 1$; since $b_{1,\alpha}(n-1)=M_{t,\alpha}(K_n)$, this implies the claim of the theorem with $s_t=1$. 
    For the base case $n=1$, we use that $N_{K_n,s_t}$ is $\Po(\kappa_d ts_t^d \mu_{\alpha}([r_t,s_t]))$. If $r_t=o_t(s_t)$, we have $\mu_{\alpha}([r_t,s_t]) = r_t^\alpha [s_t^{-\alpha}-r_t^{-\alpha}] \to 1$, hence the parameter is $\Theta_t(ts_t^d)=\Theta_t(s_t^d b_{1,\alpha}(0))$. Consequently, if $s_t^d b_{1,\alpha}(0)\to \infty$, then $N_{K_1,s_t}\geq 1$ with high probability. 
    Observe that since $tr_t^d = b_{1,\alpha}(0) r_t^d \to 0$, then $s_t^d b_{1,\alpha}(0)\to \infty$ indeed implies that $s_t/r_t\to \infty$, which is necessary for the stated order of the parameter.
    Note that if we allowed $s_t$ to be arbitrarily close to $r_t$, the existence claim would no longer hold: the condition on the mark would be too restrictive to guarantee existence.

    Now, let $n\geq 1$ be arbitrary, and assume the statement for existence holds for $K_n$.
    Consider the following lower bound: for any $\rho_t>0$,
    \begin{align*}
        \PP(N_{K_{n+1},s_t}\geq 1) 
        & \geq \PP(\exists \hat y\in \hat \xi_{t,s_t}\colon y\in (s_t/2)B^d,  R_y\geq \rho_t,\,  X_{K_{n+1}} (\hat y, \hat \xi_{t,s_t})=1)\\
        & \geq  \PP\big(\exists \hat y\in \hat \xi_{t,s_t}\colon y\in (s_t/2) B^d,  R_y\geq \rho_t,\, F_{K_n}\big(\hat \xi_{t,s_t}\big|_{(y+\rho_t B^d)\times [0,\rho_t]}\big) \geq 1\big),
    \end{align*}
    with $F_{K_n}$ given in \eqref{eq:RR score} and $X_{K_n}$ in the preceding paragraph.
    Write $H'$ for the number of points in $\hat \xi_{t,s_t}\cap A_{\rho_t}$ with $A_{\rho_t}\coloneq (s_t/2)B^d\times [\rho_t,\infty)$, and let
    \[ H \coloneq \sum_{\hat y \in \hat \xi_{t,s_t}} \ind \big(\hat y \in A_{\rho_t},\, F_{K_n}\big(\hat \xi_{t,s_t}\big|_{(y+\rho_t B^d)\times [0,\rho_t]}\big) \geq 1\big).\]
    With this notation, the previous bound is equivalent to $\PP(N_{K_{n+1,s_t}}\geq 1)\geq \PP(H\geq 1)$.
    
    Assume now that $\rho_t = o_t(s_t)$. 
    Then, for sufficiently large $t$, we have $y+\rho_t B^d \subset s_tB^d$, and so $\hat \xi_{t,s_t}\big|_{(y+\rho_t B^d)\times [0,\rho_t]}$ has the same law as $\hat \xi_{t,s_t}\big|_{\rho_t B^d\times [0,\rho_t]}=\hat \xi_{t,\rho_t}$. 
    Using this observation together with the Mecke formula,
    \begin{align*}
        \EE H 
        & = \sum_{\hat y \in \hat \xi_{t,s_t}} \ind(\hat y \in A_{\rho_t}) \ind (F_{K_n}(\hat \xi_{t,s_t}\big|_{(y+\rho_t B^d)\times [0,\rho_t]})\geq 1) \\
        & = \int_{B^d\times[0,\infty)} \ind ((y,x)\in A_{\rho_t})\PP(N_{K_n,\rho_t}(\hat \xi_{t,\rho_t})\geq 1)  f(x)\dd y \dd x \\
        & =  \PP(N_{K_n,\rho_t}(\hat \xi_{t,\rho_t})\geq 1) \EE H',
    \end{align*}
    and similarly, $\EE H^2 \leq \EE H + \EE H \EE H'$. Consequently, by the second moment method,
    \[\PP(N_{K_{n+1},s_t}\geq 1) 
    \geq  \PP(H\geq 1) \geq \frac{(\EE H)^2}{\EE H^2}
     \geq \frac{\EE H'}{1+ \EE H'}\cdot \PP(N_{K_{n},\rho_t}(\hat \xi_{t,\rho_t})\geq 1).\]
    By definition, $H'$ is a Poisson random variable with $\EE H' = \Theta(ts_t^d r_t^\alpha \rho_t^{-\alpha})$; if this expression tends to infinity, the first factor tends to one.     
    Additionally, by the induction hypothesis, the second factor tends to one for any $\rho_t$ with $\rho_t^d b_{1,\alpha}(n-1)\to \infty$.
    We show that $\rho_t = o(s_t)$ exists such that both of these conditions are fulfilled, which is sufficient to complete the inductive step; recall that the condition $\rho_t = o(s_t)$ is necessary to obtain the geometric containment necessary for the lower bound.
   First, notice that all solutions $\rho_t$ will indeed have $\rho_t = o_t(s_t)$: if conversely, $\rho_t\gtrsim s_t$ would hold, we would have $ts_t^d r_t^\alpha \rho_t^{-\alpha} \lesssim tr_t^\alpha s_t^{d-\alpha}\to 0$, since $tr_t^\alpha\to 0$, $s_t\leq 1$, and $d-\alpha>0$.
    Hence it is sufficient to find $\rho_t$ such that 
    \[
    \left\{ 
    \begin{aligned}
        \rho_t^\alpha (s_t^d t r_t^\alpha)^{-1} &\to 0\\
        \rho_t^\alpha b_{1,\alpha}^{\frac{\alpha}{d}}(n-1)&\to \infty.
    \end{aligned} 
    \right.
    \]
    This can be solved for $\rho_t$ if and only if $s_t^d (tr_t^\alpha) b_{1,\alpha}^{\frac{\alpha}{d}}(n-1)\to \infty$. By using the recurrence relation, this is equivalent to $s_t^d b_{1,\alpha}(n)\to \infty$, which was exactly our assumption for the existence of a $K_{n+1}$. This completes the proof of existence for $\alpha<d$.

    For $\alpha\geq d$, the graph is not dominated by vertices with large marks in the manner above, and we rely on a second moment method argument.
    Recall that our aim is to show that if $M_{t,\alpha}(K_n)\to \infty$ (as defined in the statement of the theorem), $K_n$ appears in the graph with high probability. 
    We show, using the second moment method, that this holds even if we only consider the vertices with mark at most $t^{-\frac 1d}$. From the integrals computed in the upper bound, one can see that for $\alpha\leq d$, the contribution coming from marks in $[r_t,t^{-\frac 1d}]$ indeed has the same order as the entire integral. The restriction is necessary here because large marks can distort the variance, even if they do not distort the expectation. 

    Formally, consider the marked process $\hat \theta_{t}\coloneq \hat \xi_t\big |_{B^d\times [2r_t,2t^{-1/d}]}$. 
    Note how compared to $\hat \xi_{t,s}$, the marks are restricted, but not the spatial component. 
    Write $N_{K_n}' \coloneq F_{K_n}(\hat \theta_t) = \sum_{\hat y \in \hat\theta_t} X_{K_n}(\hat y,\hat \theta_t)$ for the number of points in $\hat \theta_t$ that are in a $K_n$ subgraph where their mark is the largest.
    Note that this is analogous to $N_{K_n,s}$, only now using $\hat \theta_{t}$ instead of $\hat \xi_{t,s}$.
    By construction, $G_{K_n}(\hat \xi_t)\geq N_{K_n}'$, and as a consequence, it is sufficient to show $\PP(N_{K_n}'\geq 1)\to 1$ whenever $M_{t,\alpha}(K_n)\to \infty$. 
    We do this by showing that $\EE N_{K_n}'\gtrsim M_{t,\alpha}(K_n)$, and giving an upper bound on the second moment. 

    Applying the same strategy as when obtaining \eqref{eq:RR subgraph induction}, we have 
    \[\EE N_{K_n}' \gtrsim tr_t^\alpha \int_{2r_t}^{t^{-\frac 1d}} x^{-\alpha-1} \PP(N_{K_{n-1},x}\geq 1) \dd x.\]
    To obtain a lower bound on this expression, we use a reverse Markov-type inequality for the existence probability. 
    The following lemma states the necessary results.
    \begin{lemma}\label{lemma:RR complete lower}
    Let $s_t\in [2r_t,t^{-\frac 1d}]$. Then, with $I_{\alpha,s_t}(K_n)$ given by \eqref{eq:RR upper bound} on p.~\pageref{eq:RR upper bound},
    \begin{enumerate}[label=\roman*),ref=\roman*)]
        \item\label{item:one}  $\EE N_{K_{n}, s_t} \gtrsim I_{\alpha, s_t}(K_n)$
        \item\label{item:two} $\EE N_{K_{n}, s_t}^2\lesssim I_{\alpha, s_t}(K_n)$
        \item\label{item:three} $\PP(N_{K_n,s_t}\geq 1) \gtrsim I_{\alpha,s_t}(K_n)$
        \item\label{item:four} $\EE N_{K_n}'\gtrsim M_{t,\alpha}(K_n)$
        \item\label{item:five} $\EE (N_{K_n}')^2 = (\EE N_{K_n}')^2(1+o_t(1))$ whenever $\EE N_{K_n}'\to \infty$ as $t\to \infty$.
    \end{enumerate}
    \end{lemma}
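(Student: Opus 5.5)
We prove the five items together by induction on $n$, in the order \ref{item:one}, \ref{item:two}, \ref{item:three}, \ref{item:four}, \ref{item:five}. Item \ref{item:three} for $K_{n-1}$ is the input for items \ref{item:one} and \ref{item:four} for $K_n$. Throughout, $\alpha\geq d$, and only the $\alpha>d$ and $\alpha=d$ rows of \eqref{eq:RR upper bound} matter. Since $s_t\leq t^{-1/d}$, we are in the first column of that table. The key feature of this regime is $ts_t^d\leq 1$: the expected number of points of $\hat\xi_{t,s_t}$ is $O(1)$. Hence every additional point in a configuration costs at least a factor $ts_t^d\lesssim 1$.

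For \ref{item:one} we use the lower bound in \eqref{eq:RR subgraph induction}, keeping only the term for one vertex $v$:
\[\EE N_{K_n,s_t}\gtrsim ts_t^d\, r_t^\alpha\int_{2r_t}^{s_t/2} x^{-\alpha-1}\,\PP(N_{K_{n-1},x}\geq 1)\dd x .\]
For every $x$ in the range of integration we have $x\in[2r_t,t^{-1/d}]$. This is why the restriction $x\geq 2r_t$ was built into \eqref{eq:RR subgraph induction}: it lets us insert \ref{item:three} for $K_{n-1}$, which gives $\PP(N_{K_{n-1},x}\geq 1)\gtrsim I_{\alpha,x}(K_{n-1})$. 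The resulting integral is the same one evaluated in the upper bound. For $\alpha>d$ it is dominated by $x\approx r_t$ and gives $s_t^d\, tr_t^d\, b_3(n-2)=s_t^d b_3(n-1)$. For $\alpha=d$, integrating $x^{-1}(\ln(x/r_t))^{n-2}$ gives the factor $(\ln(s_t/r_t))^{n-1}$, up to a constant because the limits $2r_t$ and $s_t/2$ are shifted by constant factors. The base case $n=1$ holds because $\mu_\alpha([r_t,s_t])\geq \mu_\alpha([r_t,2r_t])\approx 1$.

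For \ref{item:two} we write $N^2=N+\sum_{\hat y\neq\hat z}X_{K_n}(\hat y)X_{K_n}(\hat z)$ and apply the Mecke formula to the pair. The extra point $\hat z$ contributes a factor of order $ts_t^d\lesssim 1$. What remains is the expectation of $X_{K_n}(\hat y,\cdot)$ evaluated on the process with one point added. By monotonicity, this is at most the analogous quantity in which the witnessing $K_n$ may use the added point. Rerunning the upper-bound recursion from the proof of \eqref{eq:RR upper bound} with one deterministic extra point only changes constants, so this term is $\lesssim I_{\alpha,s_t}(K_n)$. Item \ref{item:three} then follows from Paley--Zygmund: $\PP(N\geq1)\geq (\EE N)^2/\EE N^2\gtrsim I^2/I=I$.

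For \ref{item:four} we use the displayed lower bound for $\EE N_{K_n}'$ together with \ref{item:three} for $K_{n-1}$, which applies since $x\in[2r_t,t^{-1/d}]$. This reduces to the same integral with upper limit $t^{-1/d}$. For $\alpha>d$ it gives $t\cdot tr_t^d\, b_3(n-2)=b_3(n-1)$. For $\alpha=d$, since $\ln(t^{-1/d}/r_t)\approx\ln(1/(tr_t^d))$, it gives $b_3(n-1)(\ln(1/(tr_t^d)))^{n-1}$. In both cases this is $M_{t,\alpha}(K_n)$.

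The main obstacle is \ref{item:five}, where positive correlations have to be controlled. Write $\EE (N_{K_n}')^2=\EE N_{K_n}'+\EE\sum_{\hat y\neq\hat z}X(\hat y)X(\hat z)$ and use the Mecke formula on pairs $(\hat y,\hat z)$. All marks in $\hat\theta_t$ are at most $2t^{-1/d}$. So $X_{K_n}(\hat y,\hat\theta_t)$ depends only on the points of $\hat\theta_t$ within distance $2nt^{-1/d}$ of $y$. For pairs with $\|y-z\|>4nt^{-1/d}$, spatial independence of the Poisson process makes the indicators independent, and these pairs contribute at most $(\EE N_{K_n}')^2$. For pairs with $\|y-z\|\leq 4nt^{-1/d}$, integrating $z$ over a ball of volume $\approx t^{-1}$ produces a factor $t\cdot t^{-1}=O(1)$. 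The remaining expectation is again bounded as in \ref{item:two} with one added point, giving $\lesssim \EE N_{K_n}'$ after comparison with \ref{item:four}. Hence $\VV N_{K_n}'\lesssim \EE N_{K_n}'=o((\EE N_{K_n}')^2)$ whenever $\EE N_{K_n}'\to\infty$. The delicate point is checking that the bound for the expectation with an added point really matches $I$, respectively $M_{t,\alpha}(K_n)$, up to constants. The $\alpha=d$ logarithmic factors are the place where this needs care.
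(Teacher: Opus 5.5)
You have (i), (iii) and (iv) exactly as the paper does, but the second-moment step in (ii), reused in (v), has a genuine gap. It rests on the claim that adding one deterministic point ``only changes constants'' in the upper-bound recursion, and that claim is false. Moreover, the factor $ts_t^d$ you discard as $\lesssim 1$ is exactly what has to compensate.

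Take $n=2$, $\alpha>d$, and $s_t$ with $s_t/r_t\to\infty$ and $ts_t^d\to 0$. Fix $\hat z=(z,r_t)$ with $z\in (s_t/2)B^d$. Every $\hat y\in\hat\xi_{t,s_t}$ with $\|y-z\|\leq R_y$ has $X_{K_2}(\hat y,\hat\xi_{t,s_t}\cup\{\hat z\})=1$. The expected number of such $\hat y$ is $\approx t\int_{r_t}^{s_t/2}f_\alpha(x)x^d\,\dd x\approx tr_t^d$. This equals $I_{\alpha,s_t}(K_2)/(ts_t^d)\gg I_{\alpha,s_t}(K_2)$. If instead you fix $\hat y$ and take the worst $\hat z$, the probability is simply $1$.

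The same objection applies to (v). The $O(1)$ factor from a ball of volume $t^{-1}$ is only available when $\hat z$ is integrated for fixed $\hat y$, and then nothing uniform bounds what remains. With $\hat z$ fixed and $\hat y$ integrated, the $\hat z$-integral gives a factor $t$, not $O(1)$. So ``factor from $\hat z$'' times ``expectation with one added point'' is not a valid decomposition. You flag this as the delicate point, but it is where the argument currently breaks.

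The missing idea is to order the pair by marks. If $R_z\leq R_y$, then $X_{K_n}(\hat z,\hat\xi\cup\{\hat y,\hat z\})=X_{K_n}(\hat z,\hat\xi\cup\{\hat z\})$. This holds because a point with a larger mark can never lie in a $K_n$ in which $\hat z$ has the largest mark. So keep the indicator of the smaller-mark point, computed on the original process, and drop that of $\hat y$. Retain only $\|y-z\|\leq R_y+R_z\leq 2R_y$, which holds for every non-independent pair. Integrating $\hat y$ then acts as one more vertex with a larger mark in \eqref{eq:RR subgraph induction}. The cross term is therefore $\lesssim ts_t^d\int f_\alpha(x)\,\EE N_{K_n,2x}\,\dd x\lesssim I_{\alpha,s_t}(K_{n+1})=o(I_{\alpha,s_t}(K_n))$. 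In (v) it is $\lesssim M_{t,\alpha}(K_{n+1})=o(M_{t,\alpha}(K_n)^2)$. This is the paper's route.

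Your route can also be repaired, provided the $\hat z$-average is taken before integrating over $\hat y$. Let $\Pi$ be the process restricted to $B(y,R_y)\times[r_t,R_y]$, with mass $\Lambda\lesssim tR_y^d\lesssim 1$. Let $A$ be the event that $\hat y$ tops a $K_n$. Conditioning on $|\Pi|$ and taking a union bound over $(n-1)$-subsets gives $\EE[|\Pi|\ind_A]\leq(\Lambda+n-1)e^{\Lambda}\PP(A)$. The Mecke formula then yields $t\int\PP(X_{K_n}(\hat y,\hat\xi\cup\{\hat y,\hat z\})=1)f_\alpha(x_z)\,\dd z\,\dd x_z\leq\Lambda_{\mathrm{tot}}\PP(A)+\EE[|\Pi|\ind_A]\lesssim\PP(A)$, where $\hat z=(z,x_z)$. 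Hence $\EE N_{K_n,s_t}^2\lesssim\EE N_{K_n,s_t}$, and near pairs in (v) contribute $\lesssim\EE N'_{K_n}$. This is weaker than the paper's $o(\cdot)$ bound but sufficient.
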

    Observe that for $s_t\leq t^{-\frac 1d}$, we have $s_t^d b_3(n-1)\leq t^{-1} b_3(n-1)=(tr_t^d)^{n-1}\to 0$, hence $I_{\alpha,s_t}(K_n)\to 0$ for all $\alpha \geq d$, and so it is reasonable to lower bound a probability by it. 
    The lemma yields the existence of a $K_n$ as follows: from \ref{item:four} of the lemma, $\EE N_{K_n}'\to \infty$ whenever $M_{t,\alpha}(K_n)\to \infty$, and by the second moment method,
    \[\PP(N_{K_n}'\geq 1) = \PP(N_{K_n}'>0)\geq \frac{(\EE N_{K_n}')^2}{\EE (N_{K_n}')^2}.\]
    From \ref{item:five}, the last expression converges to one, which concludes the proof of the theorem for $K_n$.
    \begin{proof}[Proof of \Cref{lemma:RR complete lower}.]
    First, we show that \ref{item:three} follows from \ref{item:one} and \ref{item:two}: by the second moment method,
    \[\PP(N_{K_n,s_t}\geq 1) = \PP(N_{K_n,s_t} >  0) \geq \frac{(\EE N_{K_{n}, s_t})^2}{\EE N_{K_{n}, s_t}^2}.\]
    Bounding the expressions by the claim of \ref{item:one} and \ref{item:two}, the statement of \ref{item:three} follows.
    
    The rest of the proof is carried out via induction. 
    For $n=1$, we again use that $N_{K_1,s_t}$ is Poisson with parameter $\kappa_d ts_t^d \mu_{\alpha}([r_t,s_t])\gtrsim ts_t^d (2^\alpha-1)\gtrsim ts_t^d = I_{\alpha,s_t}(K_1)$, hence i) and ii) follow for $n=1$.
    
    Assume now that all statements hold for a fixed $n\geq 1$, and let us show them for $n+1$.
    Start with inequality \ref{item:one}. By \eqref{eq:RR subgraph induction} and the induction hypothesis for \ref{item:three}, 
    \[\EE N_{K_{n+1}, s_t}\gtrsim ts_t^d \int_{2r_t}^{s_t/2} f_\alpha(x) \PP(N_{K_{n}, x}\geq 1)\dd x\gtrsim ts_t^d \int_{2r_t}^{s_t/2} f_\alpha(x) I_{\alpha,x}(K_n)\dd x.\]
    This is, up to multiplicative constants in the interval endpoints, the same expression that was used in the upper bound. It is easy to see in the computations for the upper bound that the modified constants do not change the order of the integral; in particular, the resulting expression is $I_{\alpha,s_t}(K_{n+1})$, and the claim follows.

    For the second moment of $N_{K_{n+1,s_t}}$, we have by definition
    \[\EE N_{K_{n+1},s_t}^2  = \EE \sum_{\hat y_1 \in \hat \xi_{t,s_t}} \sum_{\hat y_2 \in \hat \xi_{t,s_t}} \ind (X_{K_{n+1}}(\hat y_1,\hat \xi_{t,s_t})=1) \cdot \ind (X_{K_{n+1}}(\hat y_2,\hat \xi_{t,s_t})=1). \]
    If $\hat y_1 = \hat y_2$, we obtain the expectation. For $\hat y_1\neq \hat y_2$, the sum is a Poisson $U$-statistic of order two. Applying the Mecke formula, we obtain
    \begin{align*}
        \EE N_{K_{n+1},s_t}^2   & =  \EE N_{K_{n+1},s_t} \\ & + t^2 \int_{(s_tB^d\times [r_t,s_t])^2}  
        \begin{multlined}[t]
        \PP\big(X_{K_{n+1}}(\hat y_1, \hat \xi_{t,s_t}\cup\{\hat y_1,\hat y_2\})=1, \\ X_{K_{n+1}}(\hat y_2, \hat \xi_{t,s_t}\cup\{\hat y_1,\hat y_2\})=1\big )
        f_\alpha(x_1) f_\alpha(x_2)
        \dd x_2 \dd y_2 \dd x_1 \dd y_1,
        \end{multlined}
    \end{align*}
    where now $\hat y_i = (y_i,x_i)$ for $i=1,2$.  
    Observe that if $\|y_1-y_2\|\geq x_1+x_2$, then the two indicators are independent (as the relevant points are contained in disjoint balls), and we can factor the probability. This yields
    \[\EE N_{K_{n+1},s_t}^2 \leq \EE N_{K_{n+1},s_t} + (\EE N_{K_{n+1},s_t})^2 + J_{s_t}\]
    with
    \begin{multline*}
        J_{s_t} \coloneq t^2 \int_{s_tB^d} \int_{r_t}^{s_t} \int_{y_1+(x_1+x_2)B^d} \int_{r_t}^{s_t} f_\alpha(x_1) f_\alpha(x_2) \\
        \times \PP(X_{K_{n+1}}(\hat y_1, \hat \xi_{t,s_t}\cup\{\hat y_1,\hat y_2\}) = X_{K_{n+1}}(\hat y_2, \hat \xi_{t,s_t}\cup\{\hat y_1,\hat y_2\})=1) \dd x_2 \dd y_2 \dd x_1 \dd y_1.
    \end{multline*}    
    Since $\EE N_{K_n,s_t}\lesssim I_{t,s_t}(K_n)\to 0$, we have $\EE N_{K_{n+1},s_t}^2 = o_t(\EE N_{K_{n+1},s_t})$, and it remains to be shown that $J_{s_t}\lesssim I_{\alpha,s_t}(K_{n+1})$. 
    The key observation is the following: assuming that $x_1\geq x_2$ in the integral (which we may do for the price of a constant factor of two), 
    \[X_{K_{n+1}}(\hat y_2, \hat \xi_{t,s_t}\cup\{\hat y_1,\hat y_2\})=1 \ \Leftrightarrow \ X_{K_{n+1}}(\hat y_2, \hat \xi_{t,s_t}\cup\{\hat y_2\})=1.\]
    This is because if the mark of $\hat y_2$ is smaller than that of $\hat y_1$, adding the latter to the process does not help in creating a $K_{n+1}$ where $\hat y_2$ has the largest mark.
    As a consequence,
    \begin{align*}
        J_{s_t}
        & \lesssim t \int_{s_tB^d} \int_{r_t}^{s_t} f_\alpha(x_1) \l[ t\int_{2x_1B^d} \int_{r_t}^{x_1} f_\alpha(x_2) \PP(X_{K_{n+1}}(\hat y_2, \hat \xi_{t,x_2}\cup\{\hat y_2\})=1)\dd x_2 \dd y_2\r] \dd x_1 \dd y_1 \\
        & \lesssim t s_t^d \int_{r_t}^{s_t} f_\alpha(x_1) \EE N_{K_{n+1},2x_1} \dd x_1\lesssim ts_t^d \int_{r_t}^{s_t} f_{\alpha}(x_1) I_{\alpha,x_1}(K_{n+1})\dd x_1.
    \end{align*}
    As before, this is essentially the same expression already encountered for the expectation upper bound, and we obtain $J_{s_t}\lesssim I_{\alpha,s_t}(K_{n+2}) = o_t(I_{\alpha,s_t}(K_{n+1}))$, where the second statement comes from direct comparison of the definitions of $I_{\alpha,s_t}$. This completes the proof of \ref{item:two}.

    The proof of \ref{item:four} and \ref{item:five} follow analogously. In particular, similarly to the induction for \ref{item:one}, 
    \[\EE N_{K_{n+1}}'\gtrsim  t\int_{2r_t}^{t^{-\frac 1d}} f_\alpha(x) I_{\alpha,x}(K_n)\dd x.\]
    Observe that the integral itself coincides with the integral in the lower bound for $\EE N_{K_{n+1},s_t}$ with $s_t=2t^{-\frac 1d}$, and the entire expression is only missing the factor $s_t^d = (2t^{-1/d})^d$. As a consequence, 
    \[\EE N_{K_{n+1}}'\gtrsim  \frac{I_{\alpha,t^{-1/d}}(K_{n+1})}{(t^{-1/d})^d} = tI_{\alpha,t^{-1/d}} (K_{n+1}).\]
    Comparing the definitions of $I_{\alpha,s_t}$ and $M_{t,\alpha}$, the last expression is exactly $M_{t,\alpha}(K_{n+1})$. This completes the proof of \ref{item:four}.
    For $K_{n+1}'$, we have $\EE (N_{K_{n+1}}')^2 \leq \EE N_{K_{n+1}}' + (\EE N_{K_{n+1}}')^2+ J_{s_t}'$, where $J_{s_t}'$ is defined similarly to $J_{s_t}$, with the first integral running over $B^d$ instead of $s_tB^d$. 
    If $\EE N_{K_{n+1}}'\to \infty$, the first summand (expectation) is negligible compared to its square; it remains to be shown that $J_{s_t}' = o_t((\EE N_{K_{n+1}}')^2)$.

    By the same argument as before,
    \[J_{s_t}'\lesssim t \int_{r_t}^{t^{-\frac 1d}} f_{\alpha}(x_1)  \EE N_{K_{n},2x_1} \dd x_1\lesssim \frac{I_{\alpha,t^{-1/d}}(K_{n+2})}{(t^{-1/d})^d} = M_{t,\alpha}(K_{n+2}) = o_t(M_{t,\alpha}^2(K_{n+1})).\]

    This completes the proof of \ref{item:five}, and thus the lemma.
    \end{proof}

    \subsection*{The complete bipartite graph $K_{n,m}$}

    The overall proof outline for $K_{n,m}$ is essentially the same as for $K_n$. However, the details of the integrals get more complex, and an additional change in behaviour occurs in terms of $\alpha$.
    Let $1\leq n \leq m$ with $m\geq 2$; since $K_{1,1} = K_2$, the $n=m=1$ case has already been handled.

    \paragraph{The upper bound.}
    We claim that $\EE N_{K_{n,m},s_t}\lesssim I_{\alpha,s_t}(K_{n,m})$ with $I_{\alpha,s_t}(K_{n,m})$ given by the following table: 
    \begin{center}
    \begin{tabular}{c||C{4.2cm}|C{4.8cm}}
     & $s_t \leq t^{-\frac 1d}$ & $s_t > t^{-\frac 1d}$\\
    \hline
    \hline
    $\alpha < d$ & \multicolumn{2}{c}{$s_t^d b_{1,\alpha}(n)$}   \\
    \hline
    $\alpha = d$ & 
    $s_t^d \l(\ln \l(\frac{s_t}{r_t}\r)\r)^{n-1} b_{1,\alpha}(n)$ 
    & $s_t^d  \l(\ln \l(\frac{1}{tr_t^d}\r)\r)^{n-1}b_{1,\alpha}(n)$ \\
    \hline
    $d < \alpha < d \cdot \frac{m+n-1}{n} $ & $s_t^{d(m+n)-n\alpha} a_{\alpha}(n,m)$ & $s_t^d b_{2,\alpha}(n)$ \\
    \hline
    $\alpha = d \cdot \frac{m+n-1}{n}$ & $s_t^d \ln \l(\frac{s_t}{r_t}\r) b_3(n+m-1)$ & $s_t^d  \ln \l(\frac{1}{tr_t^d}\r)b_3(n+m-1)$ \\
    \hline
    $\alpha > d \cdot \frac{m+n-1}{n}$ & \multicolumn{2}{c}{$s_t^d b_3(n+m-1)$} 
    \end{tabular}
    \end{center}
    with $a_{\alpha}(n,m)=t^{n+m} r_t^{n\alpha}$. 
    In addition, we claim that for $d < \alpha < d \cdot \frac{m+n-1}{n}$, both bounds in the table hold for all $s_t$; the columns indicate which is optimal, but for the induction, it is helpful to have both for all values of $s_t$.
    
    Observe that since $m\geq 2$ is assumed, there are two distinct changes in behaviour depending on $\alpha$; further, the larger transition point changes depending on the parameters.
    The contrast to the case of $K_n$ already shows with the initial case $K_{1,m}$. 
    Let $L_m$ be the graph with $m$ isolated points (no edges).
    For arbitrary vertex $v$ of $K_{1,m}$, the graph $K_{1,m}-v$ contains $L_m$ as a subgraph. Recalling the definition of $N_{L_m,s}$ on page~\pageref{eq:RR score}, observe that $N_{L_m,s}\geq 1$ is equivalent to $\hat \xi_{t,s}$ containing at least $m$ points. Consequently, using the induction \eqref{eq:RR subgraph induction}, we have
    \begin{align*}
        \EE N_{K_{1,m},s_t} 
        & \lesssim ts_t^d \int_{r_t}^{s_t} f_{\alpha}(x) \PP\big(\hat \xi_{t,s_t}((m+1)xB^d)\geq m \big)\dd x \\
        & \lesssim ts_t^d \int_{r_t}^{s_t} f_{\alpha}(x) \PP\big( \Po(t((m+1)x)^d)\geq m \big)\dd x\\
        &\lesssim s_t^d t r_t^\alpha \int_{r_t}^{s_t} x^{-\alpha-1} \min \{1, t^m x^{dm}\} \dd x,
    \end{align*}
    where in the last step we used the standard Chernoff-Hoeffding bound (see e.g. \cite[Lemma~1.2]{RGGbook}) for the tail of a Poisson random variable.
    For $\alpha>dm$, the leading term comes from the lower endpoint, and we obtain $s_t^d tr_t^\alpha t^ m r_t^{dm-\alpha} = s_t^d b_3(m)$.
    For $\alpha =dm$, we obtain a logarithmic integral, handled as in the case of $K_n$, depending on whether $s_t$ is above or below the threshold $t^{-\frac 1d}$.
    For $\alpha < dm$, we show that both bounds hold for all $s_t$. 
    First, 
    \[\EE N_{K_{1,m},s_t} \lesssim s_t^d tr_t^\alpha \int_{r_t}^{s_t} t^m x^{dm-\alpha-1} \dd x \lesssim s_t^d tr_t^\alpha t^m s_t^{dm-\alpha} =  s_t^{d(m+1)-\alpha} a_\alpha(1,m).\]
    On the other hand,
    \[\EE N_{K_{1,m},s_t} \lesssim s_t^d tr_t^\alpha \l[\int_{0}^{t^{-\frac 1d}} t^m x^{dm-\alpha-1} \dd x + \int_{t^{-\frac 1d}}^{\infty} x^{-\alpha-1} \dd x \r]\lesssim s_t^d tr_t^\alpha t^{\frac{\alpha}{d}} =  s_t^d b_{1,\alpha}(1).\]
    This completes the bound for the initial case $K_{1,m}$.

    Now, consider the expectation for $K_{n+1,m}$. One can see that $I_{\alpha,s_t}(K_{n+1,m-1})\lesssim I_{\alpha,s_t}(K_{n,m})$ by comparing the expressions explicitly;
    e.g. for $d < \alpha < d \cdot \frac{m+n-1}{n+1}$ and $r_t\leq s_t\leq t^{-\frac 1d}$,
    \[I_{\alpha,s_t}(K_{n+1,m-1}) = s_t^{d(m+n)-(n+1)\alpha} t^{n+m} r_t^{(n+1)\alpha} = I_{\alpha,s_t}(K_{n,m}) \l(\frac{r_t}{s_t}\r)^\alpha \leq I_{\alpha,s_t}(K_{n,m}).\]
    The rest of the cases can be verified similarly.
    As a consequence, when choosing the vertex $v$ in \eqref{eq:RR subgraph induction}, the summand with $\Gamma-v = K_{n,m}$ dominates the one with $\Gamma-v = K_{n+1,m-1}$  (more precisely, has at least the same order).
    
    Observe that for $\alpha<d$ and $\alpha > d \cdot \frac{m+n-1}{n}$, the bounds for $K_{n,m}$ correspond closely to the ones for the complete graph of a certain size. One can show that the same recurrence holds here as for the complete graph (i.e. the defining relation of $b_{1,\alpha}$ and $b_3$), as $n$ decreases and $m$ remains fixed, i.e. the size of the smaller group decreases. 
    
    This yields the bounds in the same way as before. 
    For $\alpha = d$, we use the same approach as well, noting that here, the initial case $K_{1,m}$ does not contain a logarithmic term, while the complete graph $K_2$ does: this explains why $I_{d,s_t}(K_{n,m})$ is missing a logarithmic factor compared to $I_{d,s_t}(K_{n+1})$.

    For $d < \alpha < d \cdot \frac{m+n-1}{n}$, we have
    \[\EE N_{K_{n+1,m},s_t}\lesssim s_t^d \l[ tr_t^\alpha \int_{r_t}^{t^{-\frac 1d}} a_\alpha(n,m) x^{d(m+n)-(n+1)\alpha-1}\dd x + tr_t^\alpha \int_{t^{-\frac 1d}}^\infty b_{2,\alpha}(n) x^{d-\alpha-1} \dd x \r].\]
    The first integral $I_1$ depends on $\alpha$:
    \[I_1\lesssim
        \begin{cases}
            tr_t^\alpha a_{\alpha}(n,m) t^{-\frac 1d(d(m+n)-(n+1)\alpha)} = b_{2,\alpha}(n+1),
            & \caseif d<\alpha< d\cdot \frac{n+m}{n+1},\\
            tr_t^\alpha a_{\alpha}(n,m) \ln \l(\frac{1}{tr_t^d}\r) = b_3(n+m) \ln \l(\frac{1}{tr_t^d}\r) ,
            & \caseif \alpha=d \cdot \frac{n+m}{n+1},\\
            tr_t^\alpha a_{\alpha}(n,m) r_t^{d(m+n)-(n+1)\alpha} = b_3(n+m),
            & \caseif d\cdot \frac{n+m}{n+1} < \alpha< d \cdot \frac{n+m-1}{n}.\\
        \end{cases}\]
    Observe that together with the $s_t^d$ prefactor, these are exactly the bounds stated for $K_{n+1,m}$ in the second column of the table above.
    
    Due to $d-\alpha<0$, the second integral has order $(tr_t^d)^{\frac{\alpha}{d}} b_{2,\alpha}(n)=b_{2,\alpha}(n+1)$. This recurrence easily follows from \eqref{eq:b}, and defines the sequence of functions $b_{2,\alpha}(\cdot)$ together with $b_{2,\alpha}(0) = t$. 
    We can see that $b_{2,\alpha}(n+1)$ is either the same order as the bound on $I_1$, or negligible compared to it; this completes the proof of the bound stated in the second column.
    
    For $ d< \alpha < d \cdot \frac{m+n}{n+1}$, we additionally have
    \[\EE N_{K_{n+1,m},s_t} \lesssim s_t^d tr_t^\alpha \int_{r_t}^{s_t} a_\alpha(n,m) x^{d(m+n)-(n+1)\alpha-1}\dd x \lesssim s_t^d tr_t^\alpha a_\alpha(n,m) s_t^{d(m+n)-(n+1)\alpha},\]
    which yields the bound in the first column, as $a_{\alpha}(n+1,m)=tr_t^\alpha a_{\alpha}(n,m)$ exactly.
    Lastly, for $\alpha = d \cdot \frac{m+n}{n+1}$ and $s_t\leq t^{-\frac 1d}$, we have
    \[\EE N_{K_{n+1,m},s_t} \lesssim s_t^d tr_t^\alpha \int_{r_t}^{s_t} a_\alpha(n,m) x^{-1}\dd x \lesssim s_t^d tr_t^\alpha a_\alpha(n,m) \ln \l(\frac{s_t}{r_t}\r) = s_t^d b_3(n+m) \ln \l(\frac{s_t}{r_t}\r),\]
    completing the proof of the upper bound for $K_{n,m}$.

    \paragraph{The lower bound.}
    To show the existence of a $K_{n,m}$ subgraph, we use similar tools as for the complete graph.
    In particular, for $\alpha<d$, we use the same constructive approach as for $K_n$, taking into account that $t\rho_t^d\to \infty$ implies the existence of $m$ points in a $\rho_t$-ball w.h.p. 
    For $\alpha>d$, we use the second moment method, and the analogue of \Cref{lemma:RR complete lower}. 
    For the initial case here, observe that removing the vertex in the 1-group of $K_{1,m}$ yields the graph $L_m$ with $m$ isolated vertices. As noted before when finding the upper bound, $N_{L_m,x}\geq 1$ is equivalent to $\hat \xi_{t,x}$ containing at least $m$ points, and so the lower bound in \eqref{eq:RR subgraph induction} yields
    \[\EE N_{K_{1,m},s_t} \gtrsim ts_t^d \int_{2r_t}^{s_t/2} f_\alpha(x) \PP(\Po(tx^d)\geq m)\dd x.\]
    By the definition of a Poisson random variable, the probability in the integral can be lower bounded by $e^{-tx^d}(tx^d)^m/m!\gtrsim (tx^d)^m$ whenever $tx^d\lesssim 1$. This yields the analogue statements to \Cref{lemma:RR complete lower} for $K_{1,m}$. For general $K_{n,m}$, the same inductive approach can be used as before. 

    For $\alpha=d$, we have to modify the second moment approach, as the main contribution to the integral does not come from marks in $[r_t,t^{-\frac 1d}]$. 
    Let $\beta(0)\coloneq t$ and $\beta(n) \coloneq b_{1,\alpha}(n) (\ln( 1/(tr_t^d)))^{n-1}$ for $n\geq 1$. Define the interval $A_n = [(\beta(n-1))^{-\frac 1d}, (\beta(n))^{-\frac 1d}]$.
    The main contribution to the number of $K_{n,m}$-s comes from marks in $A_{n-1}$. 
    We can thus show that for $s_t\in A_{n-1}$, $\EE N_{K_{n,m},s_t}\gtrsim I_{\alpha,s_t}(K_{n,m}) = s_t^d \beta(n)$, and the same lower bound for $\PP(N_{K_{n,m},s_t}\geq 1)$ with the second moment method, as in \Cref{lemma:RR complete lower}.
    
    This completes the proof of \Cref{thm:RR subgraph}.
    
\subsection*{Paths and proper subdivisions}
    
    Consider the paths and proper subdivisions of \Cref{lemma:RR subdivisions}.
    For paths, note that $P_1=K_1$, and for any vertex $v\in P_{n+1}$, $P_{n+1}-v$ contains a $P_n$: removing a vertex from a path of length $2^{n+1}-2$, the larger remaining connected path has at least $(2^{n+1}-2-2)/2=2^n-2$ edges. This argument gives $\EE G_{P_n}(\hat \xi_t) \lesssim M_{t,\alpha}(K_n)$ the same way as before, and the first statement in \Cref{lemma:RR subdivisions} follows.
    Similarly, if $\Gamma$ is a proper subdivision of some $K_{n+1}$, it is easy to see that $\EE G_{\Gamma}(\hat \xi_t)\lesssim M_{t,\alpha}(K_{n+1})$. In particular, for any vertex $v\in V(\Gamma)$,  $\Gamma-v$ contains a subdivision (not necessarily proper) of $K_{n}$, and subdivisions of $K_{n,m}$ are handled analogously.

    Finally, we give a short sketch of how to approach showing a stronger statement for subdivisions, namely that in the correct regime, they are strictly less likely to occur than the complete (bipartite) graphs themselves.
    Here, the structure of the induction is more intricate, and various cases have to be considered. For an example, see \Cref{fig:subdivision removal}; the details of potential progressions are given below.
    \begin{figure}[ht]
    \centering
\begin{tikzpicture}[scale = 0.9, line width =0.8,
    node/.style={circle, fill=black, minimum size=1.1pt, inner sep=1pt},
    label style/.style={font=\tiny}
]

    \node[node, label=below left:$v_2$] (n2) at (0, 0) {};
    \node[node, label=above:$v_1$] (n1) at (0.5, 1.6) {};
    \node[node, label=below right: $v_3$] (n3) at (1, 0) {};
    \node[node, label={[inner sep = 8pt]right:$v_4$}] (n4) at (0.5, 0.5) {};
    \node[node, label={[inner sep = 7pt]right:$v_5$}] (n5) at (0.5, 1) {};
    \node at (0.5,-1) {$\Gamma$};
    
    \draw (n1) -- (n2) -- (n3) -- (n1);
    \draw (n4) -- (n2);
    \draw (n4) -- (n3);
    \draw (n4) -- (n5);
    \draw (n5) -- (n1);

    \draw[-{Stealth}, thick] (3, 1) -- (4, 1) node[midway, above] {\small $-v_4$};
    \draw[-{Stealth}, thick] (8.5, 1) -- (9.5, 1)  node[midway, above] {\small $-v_1$};

    
    \begin{scope}[shift={(6, 0)}]
    \node[node, label=below left:$v_2$] (r2) at (0, 0) {};
    \node[node, label=above:$v_1$] (r1) at (0.5, 1.6) {};
    \node[node, label=below right:$v_3$] (r3) at (1, 0) {};
    \node[node, label=below:$v_5$] (r5) at (0.5, 0.7) {};
        
        \draw (r1) -- (r2) -- (r3) -- (r1);
        \draw (r1) -- (r5);
        \node at (0.5, -1) {$K_3'$};
    \end{scope}

    \begin{scope}[shift={(11, 0)}]
        \node[node, label=below left:$v_2$] (br2) at (0, 0) {};
        \node[node, label=below right:$v_3$] (br3) at (1, 0) {};
        \node[node, label=above:$v_5$] (br5) at (0.5, 0.7) {};
        
        \draw (br2) -- (br3);
        \node at (0.5,-1) {$K_2^+$};
    \end{scope}
\end{tikzpicture}
    \caption{$\Gamma$ is a proper subdivision of $K_4$. Removing the vertex $v_4$ gives a $K_3$ with an additional leaf. Subsequently removing the vertex $v_1$ yields a $K_2$ with an additional isolated vertex. If we instead remove $v_3$, then $v_1$, we obtain proper subdivisions of $K_3$, then $K_2$.}
    \label{fig:subdivision removal}
\end{figure}
    Let $K_n'$ denote the complete $n$-graph with one additional leaf (a vertex with degree one, joined to an arbitrary point of the $K_n$), and $K_n^+$ the complete $n$-graph with an added isolated point. 
    With this notation, the following statements hold:
    \begin{itemize}
        \item[-] If $\Gamma$ is a proper subdivision of $K_{n+1}$, then for any vertex $v\in V(\Gamma)$, $\Gamma-v$ contains either a proper subdivision of $K_n$, or a $K_n'$.
        \item[-] If $\Gamma = K_n'$, then $K_n'-v$ contains a $K_{n-1}^+$.
    \end{itemize}
    Note that the removal of a vertex can result in a graph with more edges (and vertices) than what is stated above: our claim is the minimal subgraph sufficient for the target statement.
    
    Lastly, if $\Gamma = K_n^+$, then \[\EE N_{\Gamma,s_t}\lesssim \min\{1,ts_t^d\} \cdot I_{\alpha,s_t}(K_n).\]
    To prove this statement, we consider induction as before, but take into account the one additional point of $K_n^+$: such a point exists with probability $\Theta_t(\min\{1, t s_t^d\})$. Though this is not independent from the existence of the $K_n$ itself, it is sufficient (and can be made precise) for an upper bound. 
    Using this specific bound, as well as the first two observations together with the induction argument, we can derive the stronger (quantitative) upper bound for proper subdivisions. 
\end{proof}

\subsection{Crossings in the max-kernel RGG}

In this final section about the max-kernel vertex marked random geometric graph, we consider the number of edge crossings in a projection.
\begin{theorem}[Crossing in the max-kernel marked RGG]\label{thm:RR crossings}
    Let $\hat \eta_t$ be a $\Pareto(r_t,\alpha)$ marked Poisson point process in the convex body $W$, and denote by $X_t(L)$ the number of crossings of $\c G_t(\hat \eta_t)$ when projected onto a plane $L$.
    Then
    \[
    \EE X_t(L) = 
    \begin{cases}
        \Theta_t\l(t^4 r_t^{2d+2}\r), 
        &\caseif \alpha>d+1,\\
        \Theta_t\l(t^4 r_t^{2d+2} \ln \l(\frac{1}{r_t}\r) \r), 
        & \caseif \alpha=d+1,\\
        \Theta_t \l(t^4 r_t^{2\alpha}\r), 
        & \caseif \alpha<d+1.
    \end{cases}
    \]
    Further, if $\alpha<d$, then $\VV X_t(L) = \Theta_t(t^6 r_t^{2\alpha})$.
\end{theorem}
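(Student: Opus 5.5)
The expectation will come directly from Theorem~\ref{thm:A}, so the only input needed is the connection function $g(x)=\PP(\max\{R_y,R_z\}\geq x)$. For $x\leq r_t$ this is $1$. For $x>r_t$,
\[
g(x)=1-\bigl(1-r_t^\alpha x^{-\alpha}\bigr)^2 ,
\]
and since $0\leq r_t^\alpha x^{-\alpha}\leq 1$ this lies between $r_t^\alpha x^{-\alpha}$ and $2r_t^\alpha x^{-\alpha}$. So up to a factor of $2$, $g$ coincides with the connection function of the Pareto SRGG.

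With this, the integral $\int_0^{c}g(x)x^d\,\dd x$ can be evaluated exactly as in the proof of Theorem~\ref{thm:SRGG crossings}:
\begin{itemize}
\item for $\alpha>d+1$ it is $\Theta_t(r_t^{d+1})$;
\item for $\alpha=d+1$ it is $\Theta_t\bigl(r_t^{d+1}\ln(1/r_t)\bigr)$;
\item for $\alpha<d+1$ it is $\Theta_t(r_t^\alpha)$, whatever constant $c$ is used.
\end{itemize}
Plugging these into the two-sided bound of Theorem~\ref{thm:A} gives the three stated orders. One point needs care: Theorem~\ref{thm:A} has different constants $c_1,c_2$ in the lower and upper bounds, and a constant change of the upper limit must not change the order. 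This holds because every regime is at most logarithmic in the upper limit, and $\ln(c/r_t)\approx\ln(1/r_t)$. Note also that the $\alpha=d+1$ expectation statement has only one logarithm, whereas squaring the integral gives two. I would double-check this against Theorem~\ref{thm:A}, and state the result with the exponent that the computation actually yields.

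For the variance when $\alpha<d$, I would write $X_t(L)$ as a $U$-statistic over quadruples of points whose edges $[y_1,y_2]$ and $[y_3,y_4]$ cross after projection. The second moment then splits by the overlap of the two quadruples, following the configurations of Section~\ref{sec:crossings variance}. The claimed order $t^6r_t^{2\alpha}$ should come from the configuration in which two crossings share a common point $y$ carrying a large mark $R$. With the max-kernel, $y$ is then joined to every point within distance $R$, so it acts as a superhub.

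The heuristic count for that configuration is as follows. There are $t$ choices for $y$, and the mark density is $r_t^\alpha R^{-\alpha-1}$. The two other endpoints of the hub's edges (one per crossing) lie in the $R$-ball and cost no connection probability, giving $(tR^d)^2$. The two crossing edges must be hit by the projected hub edges of length $\lesssim R$; each is present with probability at least $r_t^\alpha$, giving $t^2$ for the placement of each pair and $(t^2\cdot R^{2}\cdot r_t^\alpha)^2$ for the two of them. Integrating over $R$, the exponent of $R$ is positive when $\alpha<d$, so the integral is dominated by $R$ of constant order. This yields $t\cdot r_t^\alpha\cdot t^2\cdot(\ldots)$, and I must check that it reduces to $t^6r_t^{2\alpha}$.

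The more natural route I would actually pursue is this. A single hub with $R=\Theta(1)$ exists with probability $\Theta(tr_t^\alpha)$. Given the hub, it has $\Theta(t)$ edges of constant length. Each short or long edge elsewhere crosses $\Theta(t)$ of these, and the number of edges crossing is itself Poisson-like with mean $\Theta(t^2r_t^\alpha)$ at constant lengths. The hub's crossings are then $\approx t\cdot t^2 r_t^\alpha$, so their second moment is $\approx tr_t^\alpha\cdot(t^3r_t^\alpha)^2/(\ldots)$. I would pin down the exponents by carrying out this three-level integral (hub mark, hub neighbours, crossing edges) using the Mecke formula.

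For the variance bounds, the lower bound restricts to this configuration with $R\in[1/2,1]$ and all points in a fixed ball. The upper bound must show that every other overlap configuration is $\lesssim t^6r_t^{2\alpha}$. For this I would bound each configuration's connection probability by a product of $\min\{1,r_t^\alpha x^{-\alpha}\}$ factors over edges not sharing a vertex. For edges sharing a vertex, I would use the inequality
\[
\PP(y\con z_1,\dots,y\con z_k)\lesssim \PP(R_y\geq \max_i\|y-z_i\|)+\prod_i r_t^\alpha\|y-z_i\|^{-\alpha}.
\]
The main obstacle is this step: correctly accounting for the dependence among edges at a common vertex, and verifying, configuration by configuration, that the hub term dominates when $\alpha<d$.
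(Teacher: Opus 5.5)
\textbf{Expectation.} Your argument is the paper's: apply Theorem~\ref{thm:A} with $g(x)=\Theta(\min\{1,r_t^\alpha x^{-\alpha}\})$ and reuse the integrals from Theorem~\ref{thm:SRGG crossings}. Your concern about $\alpha=d+1$ is justified. Squaring $\Theta_t(r_t^{d+1}\ln(1/r_t))$ gives $t^4r_t^{2d+2}(\ln(1/r_t))^2$, as in Theorem~\ref{thm:SRGG crossings}, so the single logarithm in the statement is a slip.

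\textbf{Variance: the dominant configuration is misidentified.} A single shared hub $y$ whose two edges are each crossed by another constant-length edge is Configuration~1 of \Cref{sec:crossings variance}. It has seven points and connection probability $\approx r_t^{\alpha}\cdot r_t^\alpha\cdot r_t^\alpha$ at constant lengths, so it contributes $t^7r_t^{3\alpha}$. Both of your heuristics produce exactly this, and it does not reduce to $t^6r_t^{2\alpha}$.

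Your second heuristic goes wrong by treating the $\Theta_t(t^2r_t^\alpha)$ long edges as Poisson-like. In the max-kernel model every constant-length edge comes from a vertex with constant-order mark, so long edges arrive in stars of $\Theta_t(t)$ edges around hubs. The term you miss is two hubs whose stars cross each other. Such a pair exists with probability $\approx(tr_t^\alpha)^2$ and then produces $\Theta_t(t^2)$ crossings at once, giving $(tr_t^\alpha)^2(t^2)^2=t^6r_t^{2\alpha}$.

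In the Mecke expansion this is Configuration~2.3, which the paper uses:
\begin{itemize}
\item the two quadruples share two points $y_1,y_2$;
\item the crossings are $[y_1,a]|_L$ with $[y_2,b]|_L$, and $[y_1,a']|_L$ with $[y_2,b']|_L$;
\item when $R_{y_1},R_{y_2}$ are of constant order all four edges exist, so $G\approx r_t^{2\alpha}$ with $t^6$ points;
\item for $\alpha<d$ the integral against that configuration's density is dominated by constant edge lengths.
\end{itemize}

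\textbf{Consequences for your bounds.} Your lower bound, restricting to the single-hub configuration with $R\in[1/2,1]$, only gives $t^7r_t^{3\alpha}$. This is $o_t(t^6r_t^{2\alpha})$ whenever $tr_t^\alpha\to0$, so it does not establish the claimed order. Your upper bound needs Configuration~1 to be $\lesssim t^6r_t^{2\alpha}$. The two mechanisms differ by exactly the factor $tr_t^\alpha$, so this holds only if $tr_t^\alpha=\c O_t(1)$. The stated variance order therefore implicitly requires that regime, the one in the remark following the theorem. If $tr_t^\alpha\to\infty$, your single-hub term actually dominates.

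Under $tr_t^\alpha=\c O_t(1)$ the remaining configurations are all smaller than $t^6r_t^{2\alpha}$:
\begin{itemize}
\item Configurations~2.1 and~2.2 give $t^6r_t^{3\alpha}$;
\item Configurations~3.1 and~3.2 give $t^5r_t^{2\alpha}$;
\item the diagonal term $\EE X_t(L)$ gives $t^4r_t^{2\alpha}$.
\end{itemize}
Your vertex-dependence inequality is fine once $\min\{1,\cdot\}$ is inserted; the problem is only which configuration you carry out the computation for.
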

\begin{remark}
    Observe that the expected number of crossings shows the same asymptotic behaviour as the Pareto soft random geometric graph (\Cref{thm:SRGG crossings}). This is due to the fact (by \Cref{thm:A}) that for the expectation, only the connection probability $g(x)$ is significant, and the two models, up to a constant, have the same $g(x)$. 
    However, due to the dependence between the edges, the variance here is of a different order, and the overall behaviour is fundamentally different. 
\end{remark}
\begin{proof}
    For the expectation, we apply Theorem~\ref{thm:A} with $g(x) = \Theta (\min\{1, r_t^\alpha x^{-\alpha}\})$. 
    This yields the same computation and same orders as \Cref{thm:SRGG crossings}.
    For the variance, we can use the upcoming \Cref{sec:crossings variance}. In particular, for $\alpha<d$, Configuration 2.3 therein dominates the model, giving the order stated. 
\end{proof}
Observe that if $\alpha<d$ and $tr_t^\alpha\to 0$, we have $\EE X_t(L)= o_t(\sqrt{\VV X_t(L)})$, that is, the standard deviation has much larger order than the expectation, and so the moments are not indicative of $X_t(L)$. This is simply because two large marks will create many crossings simultaneously (which we see computationally in Configuration 2.3).
If we would like to understand the existence probability of a crossing (or more precisely, obtain the threshold function for the existence), one can correct this bias with the same strategy as in \Cref{thm:RR subgraph}: count large-mark vertices involved in creating a crossing instead of the crossings themselves.


While we believe this approach is viable, the additional geometry, and the fact that each step of an induction-like approach would be fundamentally different, makes the cases difficult to navigate. We do not attempt a meaningful contribution to this here.


\section{The number of crossings in general marked models}\label{sec:crossings}
{In this section, we consider general models of marked random geometric graphs, as discussed in the Preliminaries; recall that each point $y\in \eta_t$ possesses an i.i.d. vertex mark $R_y$, and each distinct, unordered pair $y,z\in \eta_t$ an i.i.d. edge mark $W_{y,z}$. 
Throughout the section, $\c G_t$ denotes the random graph with edges given by pairs $(y,z)$ with $\|y-z\| \leq \varphi(R_y,R_z,W_{y,z})$. 
We also write $y\con z$ for the event that the two points are connected by an edge, and $g(x)$ for the connection probability of two points at distance $x>0$.

Let $L$ be an arbitrary plane in $\R^d$. 
Denote by $X_t(L)$ the number of crossings of $\c G_t$ when projecting onto the plane $L$.
To be precise, defining
\begin{align}
\begin{split}\label{eq:kernel}
    h_L(v_1,v_2,v_3,v_4) 
    \coloneq & \ind \l(v_1 \con v_2,\, v_3\con v_4,\, [v_1,v_2]|_{L}\cap [v_3,v_4]|_{L} \neq \emptyset\r)\\
    + & \ind \l( v_1 \con v_3,\,  v_2 \con v_4,\, [v_1,v_3]|_{L}\cap [v_2,v_4]|_{L} \neq \emptyset\r)\\
    + & \ind \l( v_1\con v_4,\, v_2\con v_3,\, [v_1,v_4]|_{L}\cap [v_2,v_3]|_{L}\neq \emptyset\r),
\end{split}
\end{align}
let
\[X_t(L)\coloneq \frac{1}{24}\sum_{(v_1,v_2,v_3,v_4)\in \eta_{t,\neq}^4} h_L(v_1,v_2,v_3,v_4).\]
For points in general position, we have $h_L\in \{0,1\}$: geometrically, only one of the three possible pairs of edges can form a crossing. In our case, this implies that in the definition of $X_t(L)$, $h_L\in \{0,1\}$ almost surely.
Note also that $h_L$ implicitly depends on the vertex- and edge markings.
}

\subsection{The expected number of crossings}

First, we restate \Cref{thm:A} of the Introduction.
\begin{theorem}[Expected number of crossings]\label{thm:crossing expectation}
    Let $\c G_t$ denote any model of random geometric graph with vertex- and edge markings. Write $g(x)$ for the probability that two points at distance $x$ are connected by an edge.
    Then there exist constants $c_1,c_1'$ and $c_2,c_2'$ depending only on $W$ such that \[c_1' t^4 \l[ \int_0^{c_1} g(x) x^{d} \dd x \r]^2 \leq \EE X_t\leq c_2' t^4 \l[ \int_0^{c_2} g(x) x^{d} \dd x\r]^2.\]
    Further, if there exists $(R_t)_{t>0}$ with $R_t\to 0$ as $t\to \infty$ such that 
    \[\int_{R_t}^{c_2} g(x) x^{d} \dd x = o_t\l(\int_0^{R_t} g(x) x^{d} \dd x\r),\]
    then 
    \[
    \EE X_t(L) = t^4 \l[\int_0^{R_t} g(x) x^{d}\dd x\r]^2 \frac 18 
    \begin{multlined}[t]
    c_d (d+1)^2 I_W(L)\\ 
        \times \l(1 +\c O_t(R_t)\r) \l(1+\c O_t\l(\frac{\int_{R_t}^{c_2}g(x) x^{d}\dd x}{\int_0^{R_t}g(x) x^{d}\dd x}\r)\r)
    \end{multlined}
    \]
    with
    \begin{equation}\label{eq:crossing constant}
        c_d\coloneq 2\pi\kappa_{d-2}^2 \mathbf B\l(\frac 32, \frac d2\r)^2 
        \ \text{ and }\  
        I_W(L)\coloneq \int_{L}\lambda_{d-2}(W\cap (v+L^\perp))^2\dd v,
    \end{equation}
    where $\kappa_d$ is the volume of the $d$-dimensional unit ball, and $\mathbf B(\cdot, \cdot)$ stands for the Beta function $\mathbf B(a,b)=\int_0^1 t^{a-1} (1-t)^{b-1} \dd t$.
\end{theorem}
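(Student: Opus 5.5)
We begin with the Mecke formula. The three summands of $h_L$ contribute equally by symmetry, so
\[\EE X_t(L)=\frac{t^4}{8}\int_{W^4}\PP(v_1\con v_2)\,\PP(v_3\con v_4)\,\ind\big([v_1,v_2]|_L\cap[v_3,v_4]|_L\neq\emptyset\big)\,\dd v_1\cdots\dd v_4 .\]
The two edges are vertex-disjoint, so their existence events are independent and the connection probability factors as $g(\|v_1-v_2\|)g(\|v_3-v_4\|)$. Next we change variables, writing $v_2=v_1+x u$ and $v_4=v_3+y w$ with $x,y>0$ and $u,w\in S^{d-1}$. This produces the Jacobians $x^{d-1}y^{d-1}$ and leaves the geometric quantity
\[\Phi(x,u,y,w)\coloneq\int_{W^2}\ind\big(v_1+xu,\ v_3+yw\in W,\ [v_1,v_1+xu]|_L\cap[v_3,v_3+yw]|_L\neq\emptyset\big)\,\dd v_1\dd v_3 .\]

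For the two-sided bounds, we estimate $\Phi$ by $\Theta(xy\,|\sin\angle(u|_L,w|_L)|\,\|u|_L\|\,\|w|_L\|)$. The upper bound holds for all $x,y\le\diam W$. The lower bound holds when $x,y\le c_1$ with $c_1$ small, by fixing $v_1$ in an inner parallel set $W_{-2c_1}$.

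The estimate works as follows. For fixed $v_1$, we decompose $v_3$ into its $L$-component and its $L^\perp$-component. The set of $L$-components for which the projected segments cross is a parallelogram of area $xy\,|\det(u|_L,w|_L)|$. The $L^\perp$-component ranges over a $(d-2)$-dimensional slice of $W$ of bounded volume, and for $v_1\in W_{-2c_1}$ and small $x,y$ this slice has volume bounded below. Integrating over the directions gives a positive finite constant, so
\[\EE X_t(L)\approx t^4\Big[\int_0^{c} g(x)x^d\,\dd x\Big]^2,\]
with $c=c_1$ for the lower bound and $c=c_2=\diam W$ for the upper bound.

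For the precise asymptotics, we split each of $x,y$ at $R_t$. The contributions where $x>R_t$ or $y>R_t$ are bounded via the upper estimate by
\[t^4\int_0^{c_2}g\,x^d\,\dd x\cdot\int_{R_t}^{c_2}g\,x^d\,\dd x ,\]
which gives the second relative error factor. On $x,y\le R_t$, we compute $\Phi$ up to a factor $1+\c O(R_t)$. We restrict $v_1$ and $v_3$ to $W_{-R_t}$, and the boundary layer costs a relative error $\c O(R_t)$ since $W$ is convex. On this set the containment conditions are automatic, and $\Phi$ equals
\[xy\,|\det(u|_L,w|_L)|\int_L\lambda_{d-2}\big(W\cap(p+L^\perp)\big)^2\,\dd p\]
up to the same error, which yields $I_W(L)$. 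There is a small additional error because the $L^\perp$-slices seen at $v_1$ and $v_3$ differ by a displacement of order $R_t$; the slice volumes are Lipschitz in the base point away from the boundary, so this is again $\c O(R_t)$ in relative terms.

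What remains is the direction constant $\int_{S^{d-1}}\int_{S^{d-1}}|\det(u|_L,w|_L)|\,\dd u\,\dd w$ together with the radial integrals
\[\int_0^{R_t}g(x)x^{d-1}\,x\,\dd x=\int_0^{R_t}g(x)x^d\,\dd x .\]
We take the direction constant from the HRGG computation: specialising to $g=\ind_{[0,r_t]}$, the radial integral equals $r_t^{d+1}/(d+1)$. Matching with the known HRGG result $\EE X_t(L)\sim t^4 r_t^{2d+2}c_dI_W(L)/8$ forces the prefactor $c_d(d+1)^2/8$. This is legitimate because the direction factor does not depend on $g$, and the corrected constant of Remark~\ref{remark:RGG constant} supplies $c_d$. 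It could also be computed directly: writing $u$ in coordinates $(u|_L,u|_{L^\perp})$ gives $\kappa_{d-2}$-type factors and Beta integrals $\mathbf B(3/2,d/2)$.

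The main obstacle is controlling the geometric error uniformly. We need $\Phi$ exactly up to relative $\c O(R_t)$ uniformly in $x,y\le R_t$, and we need the crude bound for long edges to hold with constants depending only on $W$. Both reduce to the convexity-based boundary estimate, so that is where the care goes.
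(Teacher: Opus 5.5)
Your reduction is the same as the paper's. You use the Mecke formula, factor the two disjoint edges, and get a density for the pair of edge lengths of order $s_1^ds_2^d$ from the parallelogram $v_3|_L\in v_1|_L+[0,xu|_L]+[0,-yw|_L]$ and integration over $L^\perp$-slices. The paper's $\mu_L$ has density exactly your $x^{d-1}y^{d-1}\iint\Phi\,\dd u\,\dd w$. The inner parallel body for the lower bound and the split at $R_t$ giving the two error factors also match.

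The genuine gap is the value of the constant. You calibrate the direction factor against the HRGG asymptotics with the corrected $c_d$. But Remark~\ref{remark:RGG constant} says the constant in the literature is wrong, and the corrected $c_d$ of \eqref{eq:crossing constant} is justified only by the computation in this very proof. The HRGG statement is simply the special case $g=\ind_{[0,r_t]}$ of the theorem. So the matching is circular, and the explicit factor $\frac18 c_d(d+1)^2 I_W(L)$, which is the whole content of the second part beyond orders of magnitude, is not proved.

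You need to evaluate $\int_{S^{d-1}}\int_{S^{d-1}}|\det(u|_L,w|_L)|\,\dd u\,\dd w$ directly, and in your parametrisation this is short. For $d\ge 3$, the image of surface measure under $u\mapsto u|_L$ has density $(d-2)\kappa_{d-2}(1-\|z\|^2)^{(d-4)/2}$ on $L\cap B^d$. Polar coordinates in $L$ then give
\[
\int_{S^{d-1}}\int_{S^{d-1}}|\det(u|_L,w|_L)|\,\dd u\,\dd w
=8\pi\Big[(d-2)\kappa_{d-2}\int_0^1\rho^2(1-\rho^2)^{\frac{d-4}{2}}\,\dd\rho\Big]^2
=2\pi(d-2)^2\kappa_{d-2}^2\,\mathbf B\Big(\frac32,\frac{d-2}{2}\Big)^2 .
\]
The identity $\mathbf B(\frac32,\frac d2)=\frac{d-2}{d+1}\mathbf B(\frac32,\frac{d-2}{2})$ turns this into $c_d(d+1)^2$. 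For $d=2$ the integral is $\int_{S^1}\int_{S^1}|\det(u,w)|\,\dd u\,\dd w=8\pi=9c_2$. The paper reaches the same number by slicing the thin annulus $(s+h)B^d\setminus sB^d$ along $L^\perp$. That route produces $\mathbf B(\frac32,\frac d2)$ and the factor $d+1$ (from differentiating $s^{d+1}$) directly.

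A smaller point concerns the $\mathcal O(R_t)$ cost of replacing the slice at $v_3|_L$ by the slice at $v_1|_L$. This does not follow from Lipschitz continuity. The function $f(p)\coloneq\lambda_{d-2}(W\cap(p+L^\perp))$ need not be uniformly Lipschitz on $W|_L$: for a ball in $d=3$ it behaves like the square root of the distance to $\partial(W|_L)$. The pointwise gradient bound that concavity gives, $|\nabla f(p)|\lesssim 1/\mathrm{dist}(p,\partial(W|_L))$, loses a logarithm when integrated. What you need instead is that $f$, extended by $0$, has bounded variation on $L$. By Brunn's theorem its superlevel sets are convex subsets of $W|_L$, so the coarea formula bounds its total variation by $\sup f$ times the perimeter of $W|_L$. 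This gives $\int_L f(p)\,|f(p+q)-f(p)|\,\dd p=\mathcal O(\|q\|)$, which is the estimate the paper imports as \cite[Lemma~4]{DdJ25}.
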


\begin{remark}$\ $
\begin{enumerate}[label = \roman*)]
\item Our constant $c_d$ differs from the one stated in the corresponding result for the Gilbert graph; see \Cref{remark:RGG constant}.
\item If the tail of $g$ is very heavy, it is possible that the gap between the two sides of the first inequality is very large;  however, in many interesting cases, we obtain matching bounds (that is, they are of the same order of magnitude in $t$), as seen e.g. in \Cref{thm:SRGG crossings}.
\item The condition for the second part of the theorem corresponds to the expected number of crossings being dominated by ones coming from ‘very short' edges i.e. with length at most $R_t$. In this case, the approximation can be made sufficiently precise, with the error term depending on the magnitude $R_t$. 
If the condition does not hold, ‘long edges' carry significant weight in the integral, and finding exact asymptotics would require significant geometric tools. We have not attempted to resolve this here.
\item Similarly to \cite[Lemma~1]{DdJ25}, it is possible to find the expected number of crossings in any Borel set $A\subset L$, with only the constant $I_W(L)$ changing in the exact asymptotic formula.
\end{enumerate}
\end{remark}

\begin{proof}
Write
\[\c C_L\coloneq \{(v_1,v_2,v_3,v_4)\in W^4:[v_1,v_2]\vert_L\cap[v_3,v_4]\vert_L \neq \emptyset\}\]
for tuples that geometrically form a crossing on $L$, in the stated order.
By the Mecke formula, and using the symmetry of $h_L$,
\begin{align*}
    \EE  X_t(L)
    & = \frac{1}{24} t^4 \int_{W^4} \PP(h_L(v_1,v_2,v_3,v_4)=1) \dd v_1 \dd v_2 \dd v_3 \dd v_3 \\
    & = \frac 18 t^4 \int_{W^4} \PP(v_1\con v_2, v_3\con v_4)\ind ((v_1,v_2,v_3,v_4)\in \c C_L) \dd v_1 \dd v_2 \dd v_3 \dd v_4.
\end{align*}
By construction, the existence of the two edges is independent, hence
\[ \EE X_t(L) = \frac 18 t^4 \int_{W^4} g(\Vert v_1-v_2\Vert)g(\Vert v_3-v_4\Vert)\ind ((v_1,v_2,v_3,v_4)\in \c C_L) \dd v_1 \dd v_2 \dd v_3 \dd v_4.\]
Defining the measure $\mu_L$ on $([0,\infty)^2,\c B([0,\infty)^2))$ as
\begin{equation*}
\mu_L(B)\coloneq \int_{W^4}\ind ((\|v_1-v_2\|,\|v_3-v_4\|)\in B) 
\ind((v_1,v_2,v_3,v_4)\in \c C_L) \dd v_1 \dd v_2 \dd v_3 \dd v_4,
\end{equation*}
the expected number of crossings is 
\begin{equation}\label{eq:Etransform}
    \EE X_t (L)= \frac 18 t^4 \int_0^\infty \int_0^\infty g(s_1)g(s_2) \dd \mu_L(s_1, s_2).
\end{equation}
To evaluate this integral, we show the following lemma.
\begin{lemma}\label{lemma:expectation}
    The measure $\mu_L$ is absolutely continuous with respect to the Lebesgue measure, and there exist constants $c_1'',c_2'' > 0$ and $0<c_1<c_2$ depending only on $W$ such that 
    \begin{align*}
      \frac{\dd \mu_L}{\dd s_1 \dd s_2} (s_1, s_2) &\leq c_2'' \cdot  s_1^d s_2^d \text{ for all } s_1,s_2\leq c_2(W), \\
     \frac{\dd \mu_L}{\dd s_1 \dd s_2} (s_1, s_2) & \geq c_1''\cdot s_1^d s_2^d \text{ for all } s_1,s_2\leq c_1(W),
    \end{align*}
    and the density vanishes whenever $s_1>c_2(W)$ or $s_2>c_2(W)$.
    Further, given $(R_t)_{t>0}$ with $R_t\to 0$ as $t\to \infty$, 
    \[\frac{\dd \mu_L}{\dd s_1 \dd s_2} (s_1, s_2) = s_1^d s_2^d \cdot c_d(d+1)^2 I_W(L) \big(1+ \c O_t(R_t)\big)\]
    for all $s_1,s_2\leq R_t$, where the implied constant in $\c O_t(R_t)$ does not depend on $s_1$ and $s_2$.
\end{lemma}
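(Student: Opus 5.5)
We need to compute the density of the measure $\mu_L$ with respect to $(s_1,s_2)$. The natural approach is to change variables, writing
\[v_2 = v_1 + s_1 u_1, \qquad v_4 = v_3 + s_2 u_2, \qquad u_i\in S^{d-1},\]
with Jacobian $s_1^{d-1}s_2^{d-1}$. This gives
\[\frac{\dd\mu_L}{\dd s_1\dd s_2}(s_1,s_2)=s_1^{d-1}s_2^{d-1}\int_{(S^{d-1})^2}\int_{W^2}\ind(v_1+s_1u_1\in W,\ v_3+s_2u_2\in W)\,\ind(\cdot\in\c C_L)\,\dd v_1\dd v_3\,\dd u_1\dd u_2 .\]
Absolute continuity is immediate from this representation. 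The density vanishes once $s_1$ or $s_2$ exceeds $c_2:=\diam(W)$.

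For the inner integral over $(v_1,v_3)$, fix $u_1,u_2$ and project onto $L$. The projected segments $[v_1,v_1+s_1u_1]|_L$ and $[v_3,v_3+s_2u_2]|_L$ have lengths $s_1\|u_1|_L\|$ and $s_2\|u_2|_L\|$. Two segments in the plane with fixed directions cross exactly when the difference $v_3|_L-v_1|_L$ lies in a parallelogram of area
\[s_1s_2\,\|u_1|_L\|\,\|u_2|_L\|\,|\sin\theta|,\]
where $\theta$ is the angle between the projected directions. Hence the $(v_1,v_3)$-integral is at most $s_1s_2$ times a bounded geometric factor times the volume of the relevant fibres. Writing $v_i=(w_i,z_i)$ with $w_i\in L$ and $z_i\in L^\perp$, I integrate the $L^\perp$-components over $\lambda_{d-2}(W\cap(w+L^\perp))$. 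This gives the upper bound $c_2''s_1^ds_2^d$.

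For the lower bound, I restrict to $s_1,s_2\le c_1$, where $c_1$ is small relative to an inner ball $z+2\gamma_1B^d\subset W$. I also restrict $v_1,v_3$ to the inner parallel set $W_{-c_1}$, on which the containment indicators equal $1$ automatically. The remaining integral is then bounded below by a positive constant times $s_1s_2$, by the parallelogram computation applied to a set of directions of positive measure.

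For the exact asymptotics with $s_1,s_2\le R_t$, I again restrict $v_1$ to $W_{-R_t}$; the boundary layer has volume $\c O(R_t)$. Within the interior, the parallelogram has diameter $\c O(R_t)$. So, up to a relative error of $\c O(R_t)$, the fibre $W\cap(v_3|_L+L^\perp)$ can be replaced by $W\cap(v_1|_L+L^\perp)$. Justifying this replacement is the main obstacle. It requires that the map $w\mapsto\lambda_{d-2}(W\cap(w+L^\perp))$ be Lipschitz on the interior of $W|_L$ and that its boundary behaviour be controlled. Both follow from convexity via the Brunn–Minkowski concavity of fibre volumes, with the boundary strip of $W|_L$ contributing only $\c O(R_t)$.

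This yields
\[\frac{\dd\mu_L}{\dd s_1\dd s_2}=s_1^ds_2^d\,I_W(L)\int_{(S^{d-1})^2}\|u_1|_L\|\,\|u_2|_L\|\,|\sin\theta(u_1,u_2)|\,\dd u_1\dd u_2\,\bigl(1+\c O_t(R_t)\bigr).\]
It remains to evaluate the spherical integral. I parametrize each $u_i$ by its projection onto $L$, in polar coordinates $(\rho_i,\phi_i)$, together with its $L^\perp$ component. The density of $\rho=\|u|_L\|$ on $S^{d-1}$ is proportional to $\rho(1-\rho^2)^{(d-4)/2}$, carrying the factor $(d-2)\kappa_{d-2}$. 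Then $\int\rho\cdot\rho(1-\rho^2)^{(d-4)/2}\dd\rho$ produces a Beta function. After normalisation this integral equals $\kappa_{d-2}(d+1)\mathbf B(\tfrac32,\tfrac d2)/2$-type factors. The angular integral is $\int_0^{2\pi}\!\int_0^{2\pi}|\sin(\phi_1-\phi_2)|=8\pi$. Collecting these constants should match $c_d(d+1)^2=2\pi\kappa_{d-2}^2\mathbf B(\tfrac32,\tfrac d2)^2(d+1)^2$; I will track this carefully since the earlier constant in the literature was off.
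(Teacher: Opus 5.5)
Your proposal is correct, and in substance it is the paper's argument. Both proofs confine $(v_3-v_1)|_L$ to a parallelogram of area $s_1s_2\|u_1|_L\|\,\|u_2|_L\|\,|\sin\theta|$, integrate the $L^\perp$-parts into fibre volumes, use inner parallel bodies at the boundary, and obtain the constant from the same Beta integral. Your spherical computation does give $c_d(d+1)^2$, because $(d-2)\mathbf B(\frac32,\frac{d-2}{2})=(d+1)\mathbf B(\frac32,\frac d2)$; for $d=2$ your projected density degenerates and the value has to be read off directly from $\|u|_L\|\equiv 1$.

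The bookkeeping differs in two places:
\begin{itemize}
\item The paper bounds $\mu_L$ of shrinking rectangles $[s_1,s_1+h_1]\times[s_2,s_2+h_2]$, after integrating the $L^\perp$-parts of the edge vectors into the annulus cross-section $p_{a,b}$ and passing to planar polar coordinates. Your $d$-dimensional polar coordinates write the density down explicitly, so absolute continuity is immediate rather than inferred.
\item For the $\mathcal{O}_t(R_t)$ fibre-replacement step the paper simply cites \cite[Lemma~4]{DdJ25}, whereas you prove it yourself.
\end{itemize}

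In that step one claim needs correcting. The function $f(w)=\lambda_{d-2}(W\cap(w+L^\perp))$ is not uniformly Lipschitz on the interior of $W|_L$; for a ball in $\R^3$ its gradient blows up at the rim. If you use the pointwise concavity bound $|\nabla f|\lesssim 1/\mathrm{dist}(w,\partial(W|_L))$ together with a boundary strip of width $R_t$, you only get $\mathcal{O}(R_t\ln(1/R_t))$.

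What you actually need is the $L^1$ translation estimate $\int_L|f(w+p)-f(w)|\,\dd w=\mathcal{O}(\|p\|)$, and this does follow from Brunn's principle:
\begin{itemize}
\item $f^{1/(d-2)}$ is concave on $W|_L$, so $f$ is unimodal on every line.
\item Its variation along any line is therefore at most $2\sup f$.
\item Integrating over the lines parallel to $p$ gives the bound $2\|p\|\sup f\,\diam(W)$.
\end{itemize}

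Finally, restrict $v_3$ as well as $v_1$ to $W_{-R_t}$, so that the indicator $v_3+s_2u_2\in W$ also disappears. This costs the same relative error $\mathcal{O}(R_t)$, uniformly in the directions.
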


We first show how the lemma implies the theorem.
In the general case, applying \Cref{lemma:expectation} to \eqref{eq:Etransform} immediately yields the bounds stated in the theorem.
To prove the precise asymptotic statement, split the integral in \eqref{eq:Etransform} into two parts:
\[\EE X_t(L)  =   \frac 18 t^4 \int_0^{R_t} \int_0^{R_t} g(s_1)g(s_2) \dd \mu_{L}(s_1,s_2)  +  \frac 18 t^4 \iint_{[0,c'']^2\setminus [0,R_t]^2} g(s_1)g(s_2) \dd \mu_{L}(s_1,s_2).\]
Applying the exact asymptotics to the first part, and the crude bounds to the second, we have
\begin{align*}
    \EE X_t(L) 
    & \leq   \frac 18 t^4 c_d (d+1)^2 I_W(L)(1+\c O_t(R_t))
    \begin{multlined}[t] 
    \int_0^{R_t} \int_0^{R_t} g(s_1)g(s_2) s_1^d s_2^d \dd s_1 \dd s_2   \\+ \frac 18c'' t^4 \iint_{[0,c_2]^2\setminus [0,R_t]^2} g(s_1)g(s_2) s_1^d s_2^d \dd s_1 \dd s_2 
    \end{multlined}\\
    & \leq \frac 18 t^4 c_d (d+1)^2 I_W(L)(1+\c O_t(R_t)) \l(\int_0^{R_t} g(s) s^d \dd s\r)^2 \l(1+\c O_t\l(\frac{\int_{R_t}^{c_2}g(x) x^{d}\dd x}{\int_0^{R_t}g(x) x^{d}\dd x}\r)\r).
\end{align*}
By the assumptions, the last parentheses indeed converges to one. A lower bound is obtained similarly.

\end{proof}

\begin{proof}[Proof of \Cref{lemma:expectation}]
    Fix $s_1,s_2> 0$, and $0<h_1,h_2<s_1,s_2$ small, and write $s_i'\coloneq s_i+h_i$  for $i=1,2$. Assume without loss of generality that $s_1'\leq s_2'$.
We consider the value of 
\[I\coloneq \mu_L([s_1,s_1']\times [s_2,s_2']),\]
and in particular aim to determine its asymptotic behaviour as $h_1,h_2\to 0$. In what follows, we assume that $s_1,s_2\leq \diam (W)$; otherwise, $I=0$ trivially. 

For simplicity, since $L$ is fixed, we identify it with $\R^2$, and also write $B^2$ for $L\cap B^d$.
Additionally, we write $R(w,a,b) = w+(bB^d\setminus aB^d)$ for the annulus centred at $w$ between the radii $a<b$, and in particular $R(a,b)\coloneq R(\mathbf 0,a,b)$. Then
\begin{equation}\label{eq:Eintegral} 
    I = \int_W\int_{R(v_1,s_1,s_1')\cap W}\int_W \int_{R(v_3,s_2,s_2')\cap W}\ind([v_1,v_2]|_L\cap[v_3,v_4]|_L\neq \emptyset)\dd v_4 \dd v_3 \dd v_2 \dd v_1.
\end{equation}
The following steps of evaluating this expression is similar to the proof of \cite[Theorem~12]{CDRarxiv}. 

To upper bound $I$, we may drop the condition of the containment in $W$ for $v_2$ and $v_4$, and obtain by translation
\[I \leq \int_W\int_{R(s_1, s_1')} \int_{W-v_1} \int_{R(s_2, s_2')}\ind (v_1|_L+[0,v_2|_L])\cap(v_1|_L + v_3|_L+[0,v_4|_L])\neq \emptyset)\dd v_4 \dd v_3 \dd v_2 \dd v_1.\]
Denote the inner triple integral by $J^+(v_1)$.
As the integrand does not depend on the components orthogonal to $L$, we may decompose the points as $v_i=u_i+u_i'$, where $u_i\in L$ and $u_i'\in L^\perp$ ($i=2,3,4$). Integrating with respect to the orthogonal components $u_i'$, we obtain
\begin{equation*}
    J^+(v) 
    \begin{multlined}[t] 
    \leq \int_{s_1'B^2} \int_{W_L-v|_L} 
    \int_{s_2'B^2} \ind ([0,u_2]) \cap(u_3+[0,u_4])\neq \emptyset) \\
    \times\lambda_{d-2}(\{u\in W-v\colon u|_L = u_3  \})
    \prod_{i=2,4}\lambda_{d-2}(\{u\in R(s_i,s_i')\colon u|_L = u_i  \}) \dd u_4 \dd u_3\dd u_2.
    \end{multlined}
\end{equation*}
For the volume of points getting projected onto $u_3$, observe that by the edge lengths, $\|u_3\|\leq 2s_2'\leq 4s_2$, hence
\begin{align*}
    \lambda_{d-2}(\{u\in W-v\colon u|_L = u_3  \})
    & \leq \sup_{z\in 4s_2B^2} \lambda_{d-2}(\{u\in W-v\colon u|_L = z\}) \\
    & = \sup_{z\in v+4s_2B^2} \lambda_{d-2}(W\cap (z+L^\perp))\eqcolon q_v^+.
\end{align*}
For general values of $s_i$, we can upper bound this by a universal constant (independently of $v$). If the radii $s_i$ are small, $q_v^+$ is sufficiently close to $\lambda_{d-2}(W\cap (v+L^\perp))$, which is exactly the expression appearing in $I_W(L)$; we give the precise statement at the end of the proof.

For the projections onto $u_2$ and $u_4$, consider the following.
Let $u_0\in L$ be an arbitrary point of distance $x>0$ from the origin, and let
\[p_{a,b}(x) \coloneq \lambda_{d-2} \big( \{u\in R(a,b): u|_L=u_0\} \big)= \lambda_{d-2} \big(R(a,b)\cap(u_0+L^\perp)\big)\]
be the ($(d-2)$-dimensional) volume of the points of the annulus whose projection onto $L$ is $u_0$. 
Geometrically, the set of these points is a $(d-2)$-dimensional ball of radius $\sqrt{b^2-x^2}$ if $a \leq x \leq b$, and an annulus between the radii $\sqrt{a^2-x^2}$ and $\sqrt{b^2-x^2}$ for $x<a$, yielding that
\[p_{a,b}(x)= \l[(b^2-x^2)^{\frac{d-2}{2}}\ind(x\leq b)-(a^2-x^2)^{\frac{d-2}{2}}\ind (x\leq a)\r] \cdot \kappa_{d-2}.\]
\begin{figure}[h]
    \centering
    \begin{subfigure}{0.4\textwidth}
    \hfill
    \centering
    \begin{tikzpicture}[scale=0.5]
        \coordinate (x) at (0,0);

        \draw [line width = 1pt] (x) circle (5);
        \draw [line width = 1pt] (x) circle (3);
        \fill [black!15, even odd rule] 
              (x) circle (5)
              (x) circle (3);
  
        \coordinate (x1) at (3.5,0);
        \draw [fill=black] (x1) circle (2pt);
        \node at (x1) [right = 1pt]{$x_0$};      
        \draw [line width = 1pt] (3.5,-3.6)--(3.5,3.6);
        \draw [line width = 0.5pt, dashed] (x1)--(x)--(3.5,3.6);
        \node at (2.15,0) [below=2pt] {$s$};
        \node at (2.15,2.77) {$b$};
    \end{tikzpicture}
    \end{subfigure}
    \hspace{20pt}
    \begin{subfigure}{0.4\textwidth}
    \centering
    \begin{tikzpicture}[scale=0.5]
        \coordinate (x) at (0,0);

        \draw [line width = 1pt] (x) circle (5);
        \draw [line width = 1pt] (x) circle (3);
        \fill [black!15, even odd rule] 
              (x) circle (5)
              (x) circle (3);
              
        \coordinate (x2) at (1.5,0);
        \draw [fill=black] (x2) circle (2pt);
        \node at (x2) [right = 1pt]{$x_0$};
        \draw [line width = 1pt] (1.5,-4.8)--(1.5,-2.6); 
        \draw [line width = 1pt] (1.5,4.8)--(1.5,2.6); 
        \draw [line width = 0.5pt, dashed] (x2)--(x)--(1.5,4.8);
        \draw [line width = 0.5pt, dashed] (x)--(1.5,2.6);
        \node at (0.7,0) [below=2pt] {$s$};
        \node at (0.6,3.7) {$b$};
        \node at (1.1,0.9) {$a$};
    \end{tikzpicture}
    \hfill
    \end{subfigure}
    
    \caption{The cross-section of the points of an annulus projected onto $x_0$.}
    \label{fig:annulus}
\end{figure}
As a consequence,
\[ J^+(v) \leq q_v^+\int_{s_1'B^2}\int_{2s_2'B^2} \int_{s_2'B^2} 
\ind ([0,u_2])\cap
\begin{multlined}[t]
(u_3+[0,u_4])\neq \emptyset)  \\p_{s_1,s_1'}(\|u_2\|) p_{s_2,s_2'} (\|u_4\|)\dd u_4 \dd u_3\dd u_2.
\end{multlined}
\]
For fixed $u_2$ and $u_4$, the indicator of the crossing is equivalent to the condition $u_3\in [0,u_2]+[0,-u_4]$. Integrating with respect to $u_3$, 
\begin{align*}
    J^+(v) &\leq q_v^+ \int_{s_1'B^2}\int_{s_2'B^2} \lambda_2([0,u_2]+[0,-u_4]) p_{s_1,s_1'}(\|u_2\|) p_{s_2,s_2'} (\|u_4\|)\dd u_4 \dd u_2 \\
    &\leq q_v^+ \int_{s_1'B^2}\int_{s_2'B^2} \lambda_2\bigg(\l[0, \frac{u_2}{\|u_2\|}\r]+
        \begin{multlined}[t]
        \l[0,-\frac{u_4}{\|u_4\|}\r]\bigg) \\ \times \|u_2\|\|u_4\| p_{s_1,s_1'}(\|u_2\|) p_{s_2,s_2'} (\|u_4\|)\dd u_4 \dd u_2.
        \end{multlined}
\end{align*}
Transforming into polar coordinates with $u_2=x_1z_1$ and $u_4=x_2z_2$ with $z_1,z_2\in S^1$, 
\[J^+(v)\leq q_v^+ \int_0^{s_1'} x_1^2 p_{s_1,s_1'}(x_1)  \dd x_1 \int_0^{s_2'}  x_2^2p_{s_2,s_2'}(x_2) \dd x_2 \cdot \int_{(S^1)^2} \lambda_2([0,z_1]+[0,-z_2]) \dd z_1 \dd z_2,\]
where one of the $x_i^2$ comes from the edge length, and one from the Jacobian of the transformation.
The last integrand, the area of a parallelogram spanned by two vectors, is the absolute sine of the angle between the two vectors. By symmetry, the integral is then $2\pi \int_{0}^{2\pi} |\sin \varphi|\dd \varphi = 8\pi$.
Lastly, the integrals with respect to $x_1$ and $x_2$ can be explicitly evaluated. In particular,
\[\int_0^b  x^2 (b^2-x^2)^{\frac{d-2}{2}}\dd x 
     = \frac 12\int_0^1  b^3 z^{\frac 12} (b^2-b^2z)^{\frac{d-2}{2}}\dd y 
    = \frac 12 b^{d+1}\int_0^1 (1-z)^{\frac d2-1} z^{\frac 12} \dd z,
    \]
which is exactly $\frac 12 b^{d+1} \mathbf B\l(\frac 32, \frac d2\r)$ by definition of the Beta function, hence
\[\int_0^{s_i'} p_{s_i,s_i'}(x_i) x_i^2 \dd x_i =  \frac 12\kappa_{d-2}\mathbf B\l(\frac 32, \frac d2\r) \big((s_i')^{d+1}-s_i^{d+1}\big)\sim h_i \frac 12(d+1) \kappa_{d-2}\mathbf B\l(\frac 32, \frac d2\r)  s_i^d \]
as $h_i\to 0$.
Altogether, we have
\[\limsup_{(h_1,h_2)\to (0,0)} \frac{\mu_L([s_1,s_1'] \times [s_2,s_2'])}{h_1h_2}
\leq 2\pi (d+1)^2 \kappa_{d-2}^2 \mathbf B^2\l(\frac 32, \frac d2\r) s_1^d s_2^d \int_W q_v^+\dd v.\]
Integrating out the component of $v$ orthogonal to $L$, we obtain $\int_L q_u^+ \lambda_{d-2}(W\cap(u+L^\perp))\dd u\leq \int_L(q_u^+)^2 \dd u$.
It is easy to see that the expression without the $s_1^ds_2^d$ factor can be universally upper bounded, which yields the crude upper bound of the lemma.

Lastly, we need a few modifications to obtain a lower bound.
In the first step \eqref{eq:Eintegral}, restrict $v_1$ and $v_3$ to the inner parallel body $W'\coloneq W_{-2s_2}= \{y\in K\colon y+2s_2B^d\subset K\}$. Then $R(v_1, s_1,s_1'), R(v_3,s_2,s_2')\subset W$, and we obtain $I\geq \int_{W'} J^-(v)\dd v$ with $J^-(v)$ as $J^+(v)$, only changing $v_3$ to range over $W'-v$.
The rest of the computations essentially remain the same, and we obtain
\[\liminf_{(h_1,h_2)\to (0,0)} \frac{\mu_L([s_1,s_1'] \times [s_2,s_2'])}{h_1h_2}
\geq 2\pi (d+1)^2 \kappa_{d-2}^2 \mathbf B^2\l(\frac 32, \frac d2\r) s_1^d s_2^d \int_{W'} q_v^-\dd v\]
with $q_v^- \coloneq \inf_{z\in v+4s_2B^2} \lambda_{d-2}(W'\cap(z+L^\perp))$. The remaining integral is lower bounded by $\int_L (q_u^-)^2\dd u$ similarly to the last step of the upper bound.
Now, if $s_2$ is too large, then $W'$ can be empty, or $q_v^-=0$ for all $v\in W'$. 
However, since $W$ has nonempty interior, there is a constant $c_1(W)$ such that if we choose $s_1,s_2\leq c_1(W)$, then the projected volume is positive for some $v$, and the integral does not vanish; this gives the crude lower bound of the lemma.

Lastly, if $s_1,s_2\leq R_t$, where $R_t\to 0$ as $t\to \infty$, then by \cite[Lemma~4]{DdJ25},
\[\int_W q_v^+\dd v - \int_{W_{-2r_t}} q_v^- \dd v 
    = \int_W \Bigg[ 
    \begin{multlined}[t]
    \sup_{x\in v+4s_2B^2} 
    \lambda_{d-2}\big(W \cap (x+L^\perp)\big) \\
     -\inf_{x\in v+4s_2B^2} \lambda_{d-2}\big( W_{-2s_2}\cap (x+L^\perp)\big)\Bigg] \dd v = \c O_t(R_t).
     \end{multlined}
\]
Since $q_v^-\leq\lambda_{d-2}(W\cap(v+L^\perp))\leq q_v^+$, this proves the precise asymptotic stated in the lemma.

\end{proof}

\subsection{The variance of the number of crossings}
\label{sec:crossings variance}
In this section, we derive variance results for the number of crossings. Our goal is to give asymptotics in the most general form possible, which includes many different dependence structures. This makes it difficult to give a concise, but comprehensive statement without an overwhelming amount of different cases and notation. 
This section thus contains all of the tools necessary to find the variance in any of the models described in the introduction, but not in the form of a single result.

For the second moment of the number of crossings, 
\[\EE X_t^2(L) = \frac{1}{24^2}\cdot \EE \sum_{\substack{(v_1,\dots,v_4)\in \eta_{t,\neq}^4 \\ (w_1,\dots,w_4)\in \eta_{t,\neq}^4}} h_{L}(v_1,v_2,v_3,v_4) h_{L}(w_1,w_2,w_3,w_4),\]
with $h_L$ defined in \eqref{eq:kernel}.
If $\{v_1,\ldots, v_4\} \cap\{w_1,\ldots, w_4\}=\emptyset$, we can apply the Mecke formula independently to obtain $(\EE X_t(L))^2$; this yields that the variance is given by the above sum over two $4$-tuples with nonempty intersection. Also note that if $\{v_1,\ldots, v_4\}=\{w_1,\ldots, w_4\}$, then we get exactly the expectation $\EE X_t(L)$.


Going forward, without loss of generality, we may let $v_i=w_i$ for $i=1,\ldots, k$ with $k\in \{1,2,3\}$. Applying the Mecke formula then gives, up to a constant,
\[t^{8-k} \int_{W^{8-k}} \PP(h_L(v_1,v_2,v_3,v_4)h_L(v_1,\ldots, v_k, w_{k+1},\ldots, w_4)=1)\dd v_1 \ldots \dd v_4 \dd w_{k+1}\ldots \dd w_4.\]
Geometrically, $h_L$ only contains information about whether a crossing exists or not, but not about which two pairs of points form that crossing (as noted before, only one of the possible combinations can provide a crossing).
Thus it is necessary to understand which pairs of points form the two crossing edges, and in particular how they relate to each other across the two $4$-tuples. This gives rise to different geometric settings, as well as different connection probabilities.
Writing $\c C\coloneq  \{(u_1,u_2,u_3,u_4)\in (\R^d)^4:  [u_1,u_2]|_L\cap [u_3,u_4]|_L\neq \emptyset\}$, we must evaluate integrals of the form
\begin{equation}\label{eq:variance integral}
I\coloneq t^{8-k} \int_W\Bigg[\int_{W^{7-k}} 
\begin{multlined}[t]
    \PP(v\con p_2, p_3\con p_4, v\con q_2, q_3\con q_4) \\ 
    \times \ind_{\c C}((v,p_2,p_3,p_4)) \ind_{\c C}((v,q_2,q_3,q_4)) \dd v_2 \cdots \dd v_4 \dd w_{k+1} \cdots \dd w_4 \Bigg]\dd v,
\end{multlined}
\end{equation}
where $ \{p_2,p_3,p_4\}=\{v_2,v_3,v_4\}$ and $\{q_2,q_3,q_4\} = \{v_2,\ldots,v_{k}, w_{k+1},\ldots, w_4\}$. 
First, we give a general overview of how these integrals can be evaluated, and give the details of the possible arrangements later on. 

Let the choice of $p_i$-s and $q_j$-s be fixed.
Denote by $m$ the number of edges in the configuration: we either have the maximal $m=4$, or two of the edges coincide and hence $m=3$. Without loss of generality, we may assume that in the case of $m=3$, the coinciding edge is the one with endpoint $v$, i.e. $p_2=q_2$.
Since the connection probabilities are translation invariant, the probability that the $m$ edges all exist depends only on the length of these edges, and not the exact positions of the points. As a consequence,
\[I= t^{8-k} \int_W \Bigg[\int_{(W-v)^{7-k}} G(\mathbf x) \ind_{\c C}((0,p_2,p_3,p_4)) \ind_{\c C}((0,q_2,q_3,q_4)) \dd v_2 \cdots \dd v_4 \dd w_{k+1} \cdots \dd w_4 \Bigg]\dd v\]
with 
\begin{equation}\label{eq:edgelengths}
\mathbf x = 
    \begin{cases}
    \big( \|p_2\|,\ \|p_3-p_4\|,\ \|q_3-q_4\| \big), & \text{ if } m=3,\\
    \big( \|p_2\|,\ \|p_3-p_4\|,\ \|q_2\|,\ \|q_3-q_4\| \big), & \text{ if } m=4,
    \end{cases}
\end{equation}
and $G(\mathbf x)$ indicating the connection probability. Note that while it does not appear in the notation, $G$ generally depends on $t$, and dependence on vertex- and edge markings is also assumed.

We transform this integral similarly to the expectation, though we now disregard dependence on $v$, as we only aim to obtain an order of magnitude.
More precisely, define the measure 
\[\mu_{\gamma}(A) = \int_{(\gamma B^d)^{7-k}} \ind_{\c C}((0,p_2,p_3,p_4)) \ind_{\c C}((0,q_2,q_3,q_4)) \ind_A(\mathbf x )\dd v_2 \cdots \dd v_4 \dd w_{k+1} \cdots \dd w_4\]
on $(\R^m, \c B(\R^m))$ for arbitrary $\gamma>0$.
Since $W$ has non-empty interior, it is contained between two balls, which can be used as approximating upper- and lower bounds; see \eqref{eq:ball sandwich} and the corresponding reasoning.
In particular, there exist constants $c_1,c_2,\gamma_1,\gamma_2>0$ such that
\[c_1 t^{8-k} \int_{[0,\gamma_1]^m} G(\mathbf s) \mu_{\gamma_1}(\dd \mathbf s )  \leq I \leq c_2t^{8-k} \int_{[0,\gamma_2]^m} G(\mathbf s) \mu_{\gamma_2}(\dd \mathbf s ).\]
We will show that $\mu_{\gamma}$ is continuous with respect to the Lebesgue measure. Given upper- and lower bounds for its density
\[c_1' f^-(\mathbf s) \leq \frac{\dd \mu_{\gamma_1}}{\dd \lambda_m}(\mathbf s)\  \text{ and }\ \frac{\dd \mu_{\gamma_2}}{\dd \lambda_m}(\mathbf s)\leq c_2' f^+(\mathbf s),\]
we obtain the bounds
\[\EE X_t(L)+c_1''\sum t^{8-k} \int_{[0,\gamma_1]^m} 
\begin{multlined}[t]
G(\mathbf s)  f^-(\mathbf s) \dd s
\\ \leq \VV X_t(L) \leq \EE X_t(L)+  c_2''\sum t^{8-k} \int_{[0,\gamma_2]^m} G(\mathbf s) f^+(\mathbf s) \dd s
\end{multlined}\]
for the variance, where the sum runs over all possible geometric configurations, and $c_1'',c_2''>0$ are constants (depending on $W$).
These can be considered matching bounds in the same sense as in the case of the expectation: if the tails of the connection probabilities are not too heavy, changing the domain of integration (corresponding to the maximal edge length allowed) by a constant will change the integral by at most a multiplicative constant factor, thus giving the order of each term. 

Lastly, we give a complete description of the possible configurations. 
\begin{figure}[ht]
    \centering
    \begin{subfigure}{0.3\textwidth} 
        \centering
        \hfill
        \begin{tikzpicture}[scale=0.5]
            \coordinate (v) at (0,0);
            \draw [fill=black] (v) circle (2pt);
            \node at (v) [below left = 2pt]{$0$};
            
            \coordinate (v2) at (3,2);
            \draw [fill=black] (v2) circle (2pt);
            \node at (v2) [right = 2pt] {$v_2$};
            
            \coordinate (v3) at (1,-1);
            \draw [fill=black] (v3) circle (2pt);
            \node at (v3) [below = 2pt] {$v_3$};
            
            \coordinate (v4) at (2.5,3);
            \draw [fill=black] (v4) circle (2pt);
            \node at (v4) [above right] {$v_4$};
            
            \coordinate (w2) at (-0.5,4.5);
            \draw [fill=black] (w2) circle (2pt);
            \node at (w2) [above right] {$w_2$};
            
            \coordinate (w3) at (0.5,2);
            \draw [fill=black] (w3) circle (2pt);
            \node at (w3) [below right] {$w_3$};
            
            \coordinate (w4) at (-2,4);
            \draw [fill=black] (w4) circle (2pt);
            \node at (w4) [above left] {$w_4$};
            
            \draw [line width = 1pt] (v)--(v2);
            \draw [line width = 1pt, dashed] (v3)--(v4);
            \draw [line width = 1pt] (v)--(w2);
            \draw [line width = 1pt, dashed] (w3)--(w4);
        \end{tikzpicture}

        \caption{Configuration 1}
    \end{subfigure}
    \hfill
    \begin{subfigure}{0.3\textwidth} 
        \centering
        \begin{tikzpicture}[scale=0.5]
            \coordinate (v) at (0,0);
            \draw [fill=black] (v) circle (2pt);
            \node at (v) [below left = 2pt]{$0$};
            
            \coordinate (v2) at (4,3);
            \draw [fill=black] (v2) circle (2pt);
            \node at (v2) [right = 2pt] {$v_2$};
            
            \coordinate (v3) at (3,-1);
            \draw [fill=black] (v3) circle (2pt);
            \node at (v3) [below = 2pt] {$v_3$};
            
            \coordinate (v4) at (1.5,3);
            \draw [fill=black] (v4) circle (2pt);
            \node at (v4) [above left] {$v_4$};
            
            \coordinate (w3) at (4.5,2);
            \draw [fill=black] (w3) circle (2pt);
            \node at (w3) [below right] {$w_3$};
            
            \coordinate (w4) at (2,4);
            \draw [fill=black] (w4) circle (2pt);
            \node at (w4) [above left] {$w_4$};
            
            \draw [line width = 1pt] (v)--(v2);
            \draw [line width = 1pt, dashed] (v3)--(v4);
            \draw [line width = 1pt, dashed] (w3)--(w4);
        \end{tikzpicture}

        \caption{Configuration 2.1}
    \end{subfigure}
    \hfill
    \begin{subfigure}{0.3\textwidth} 
        \centering
        \begin{tikzpicture}[scale=0.5]
            \coordinate (v) at (0,0);
            \draw [fill=black] (v) circle (2pt);
            \node at (v) [below left = 2pt]{$0$};
            
            \coordinate (v2) at (4,2);
            \draw [fill=black] (v2) circle (2pt);
            \node at (v2) [right = 2pt] {$v_2$};
            
            \coordinate (v3) at (3,-1);
            \draw [fill=black] (v3) circle (2pt);
            \node at (v3) [below = 2pt] {$v_3$};
            
            \coordinate (v4) at (1,1.5);
            \draw [fill=black] (v4) circle (2pt);
            \node at (v4) [above right] {$v_4$};
            
            \coordinate (w3) at (1.5,6);
            \draw [fill=black] (w3) circle (2pt);
            \node at (w3) [below right] {$w_3$};
            
            \coordinate (w4) at (-1,6);
            \draw [fill=black] (w4) circle (2pt);
            \node at (w4) [above left] {$w_4$};
            
            \draw [line width = 1pt] (v)--(v2);
            \draw [line width = 1pt] (v)--(w3);
            \draw [line width = 1pt, dashed] (v3)--(v4);
            \draw [line width = 1pt, dashed] (v2)--(w4);
        \end{tikzpicture}

        \caption{Configuration 2.2}
    \end{subfigure}
    \hfill

    \vspace{5pt}

    \hfill

    \begin{subfigure}{0.3\textwidth} 
        \centering
        \begin{tikzpicture}[scale=0.5]
            \coordinate (v) at (0,0);
            \draw [fill=black] (v) circle (2pt);
            \node at (v) [below left = 2pt]{$0$};
            
            \coordinate (v2) at (3,1);
            \draw [fill=black] (v2) circle (2pt);
            \node at (v2) [right = 2pt] {$v_2$};
            
            \coordinate (v3) at (4,-1);
            \draw [fill=black] (v3) circle (2pt);
            \node at (v3) [right=1pt] {$v_3$};
            
            \coordinate (v4) at (1,-2);
            \draw [fill=black] (v4) circle (2pt);
            \node at (v4) [below] {$v_4$};
            
            \coordinate (w3) at (-1,4);
            \draw [fill=black] (w3) circle (2pt);
            \node at (w3) [above=1pt] {$w_3$};
            
            \coordinate (w4) at (-2,3);
            \draw [fill=black] (w4) circle (2pt);
            \node at (w4) [above left] {$w_4$};
            
            \draw [line width = 1pt] (v)--(v3);
            \draw [line width = 1pt] (v)--(w3);
            \draw [line width = 1pt, dashed] (v2)--(v4);
            \draw [line width = 1pt, dashed] (v2)--(w4);
        \end{tikzpicture}

        \caption{Configuration 2.3}
    \end{subfigure}
    \hfill
    \begin{subfigure}{0.3\textwidth} 
        \centering
        \begin{tikzpicture}[scale=0.5]
            \coordinate (v) at (0,0);
            \draw [fill=black] (v) circle (2pt);
            \node at (v) [below left = 2pt]{$0$};
            
            \coordinate (v2) at (4,4.5);
            \draw [fill=black] (v2) circle (2pt);
            \node at (v2) [right = 2pt] {$v_2$};
            
            \coordinate (v3) at (4,0.5);
            \draw [fill=black] (v3) circle (2pt);
            \node at (v3) [below right=1pt] {$v_3$};
            
            \coordinate (v4) at (-1,2.5);
            \draw [fill=black] (v4) circle (2pt);
            \node at (v4) [above left] {$v_4$};
            
            \coordinate (w4) at (2,3.5);
            \draw [fill=black] (w4) circle (2pt);
            \node at (w4) [above left] {$w_4$};
            
            \draw [line width = 1pt] (v)--(v2);
            \draw [line width = 1pt, dashed] (v3)--(v4);
            \draw [line width = 1pt, dashed] (v3)--(w4);
        \end{tikzpicture}

        \caption{Configuration 3.1}
    \end{subfigure}
    \hfill
    \begin{subfigure}{0.3\textwidth} 
        \centering
        \begin{tikzpicture}[scale=0.5]
            \coordinate (v) at (0,0);
            \draw [fill=black] (v) circle (2pt);
            \node at (v) [below left = 2pt]{$0$};
            
            \coordinate (v2) at (-2,4);
            \draw [fill=black] (v2) circle (2pt);
            \node at (v2) [above left = 1pt] {$v_2$};
            
            \coordinate (v3) at (4,1);
            \draw [fill=black] (v3) circle (2pt);
            \node at (v3) [below right] {$v_3$};
            
            \coordinate (v4) at (-2,3);
            \draw [fill=black] (v4) circle (2pt);
            \node at (v4) [below left] {$v_4$};
            
            \coordinate (w4) at (3,-1);
            \draw [fill=black] (w4) circle (2pt);
            \node at (w4) [below right] {$w_4$};
            
            \draw [line width = 1pt] (v)--(v2);
            \draw [line width = 1pt] (v)--(v3);
            \draw [line width = 1pt, dashed] (v3)--(v4);
            \draw [line width = 1pt, dashed] (v2)--(w4);
        \end{tikzpicture}
        \caption{Configuration 3.2}
    \end{subfigure}
    \hfill

    \caption{Geometric configurations of two crossings.}
    \label{fig:variance}
\end{figure}
All cases are shown, in the order discussed below, in \Cref{fig:variance}.
Let $\mathbf p = (p_1,\ldots, p_m)$, $\mathbf q = (q_1,\ldots, q_m)$ and $\mathbf x = (x_1,\ldots, x_m)$.
If $k=1$, each selection for $\mathbf p$ and $\mathbf q$ is geometrically equivalent. 
If $k=2$, there are three distinct cases: the coinciding pair of points $(0,v_2)$ is either involved in the crossing in both $4$-tuples, only one, or neither. 
If $k=3$, the coinciding triple $(0,v_2,v_3)$ contains a relevant edge  in both $4$-tuples: they either coincide, or they do not.
The table \eqref{tab:variance} gives a summary of the cases; the first column contains the labels used as a reference. 

\begin{equation}\label{tab:variance}
    \begin{tabular}{c||c|cc|c|c}
        & $k$ & $\mathbf p$ & $\mathbf q$ & $m$ & $f^+(\mathbf s)$\\
        \hline
        \hline
        \phantom{1.}1$^*$ & $1$ & $(v_2,v_3,v_4)$ & $(w_2,w_3,w_4)$ & $4$ & $s_1^d s_2^d s_3^d s_4^d$\\
        \hline
        2.1$^*$ & $2$ & $(v_2,v_3,v_4)$ & $(v_2,w_3,w_4)$ & $3$ & $s_1^{d+1} s_2^d s_3^d$\\
        2.2\phantom{$^*$} & $2$ & $(v_2,v_3,v_4)$ & $(w_3,v_2,w_4)$ & $4$ & $s_1^d s_2^d s_3^{d-1} s_4^{d-1}$\\
        2.3\phantom{$^*$} & $2$ & $(v_3,v_2,v_4)$ & $(w_3,v_2,w_4)$ & $4$ & $s_1^{d-1}s_2^{d-1} s_3^{d-1} s_4^{d-1} \min\{s_1s_2, s_3s_4\}$ \\
        \hline
        3.1\phantom{$^*$} & $3$ & $(v_2,v_3,v_4)$ & $(v_2,v_3,w_4)$ & $3$ & $s_1^ds_2^{d-1} s_3^{d-1} \min\{s_2,s_3\}$ \\
        3.2\phantom{$^*$} & $3$ & $(v_2,v_3,v_4)$ & $(v_3,v_2,w_4)$ & $4$ & $s_1^{d-1} s_2^{d-1} s_3^{d-1} s_4^{d-1}$\\
    \end{tabular}
\end{equation}

The last column of the table contains easily accessible upper bounds for the density. For the configurations with an $^*$ notation, the lower bound $f^-(\mathbf s)$ is the same (that is, this is the exact behaviour of the density), but for the other cases, this is generally not true. This is primarily because the density is highly sensitive to the relative ordering of the values $s_1,\ldots,s_m$.
The computations use similar ideas to the proof of \Cref{lemma:expectation}, with more geometric considerations being necessary. 
We give a sketch below, including the approach that can be used to derive the exact order.


With the notation above, and for every configuration, we consider 
\[M \coloneq \int_{(B^d)^{7-k}} \ind_{\c C}(0,p_2,\begin{multlined}[t] p_3,p_4) \ind_{\c C}(0,q_2,q_3,q_4) \\ \ind(\mathbf x\in [s_1,s_1+h_1]\times \cdots \times [s_m,s_m+h_m])\dd v_2 \cdots \dd v_4 \dd w_{k+1} \cdots \dd w_4
\end{multlined}
\]
for $s_i\leq 1$ and $h_i$ small; we shall indicate the specific configuration by a superscript. 
Considering this expression as $h_1,\ldots, h_m\to 0$ gives the density of $\mu_1$.
Note that considering $\mu_\gamma$ generally, i.e. letting the points range over balls of radius $\gamma$ will only change the expression by a constant factor, and so it is enough for us to consider the case of the unit ball. 

The general recipe for understanding this integral is similar to that of the expectation, in particular the proof of \Cref{lemma:expectation}. 
We give a general outline, then consider the unique geometry of each configuration. We focus on deriving the upper bounds stated in \eqref{tab:variance}, and only give short remarks on how to obtain the correct order.

Step one: in every edge, one of the endpoints is reparametrised (by translation) in relation to the other endpoint. The range of the reparametrised point then becomes, due to the indicator, the annulus $R(s_i,s_i')\coloneq s_i'B^d \setminus s_i B^d$, with $s_i'\coloneq s_i+h$. 
Step two: since the remaining integrand (the indicator of the crossings) only depends on the projections to $L$, we may integrate with respect to the orthogonal component of each point. This gives the $(d-2)$-dimensional volume of points that get projected onto a certain point of $L\cap B^d \eqcolon B^2$.
For the points in an annulus, the volume of points $v_i\in R(s_i,s_i')$ projected onto some $u_i\in s_i' B^2$ with $\|u_i\|\eqcolon x$ is
\[P_i(x) \coloneq \l[((s_i')^2-x^2)^{\frac{d-2}{2}}-(s_i^2-x^2)^{\frac{d-2}{2}}\ind (x\leq s_i)\r] \cdot \kappa_{d-2}.\]
For the points running over $B^d$ (i.e. ones which were not reparametrised), we upper bound this volume by some universal constant. 
Observing that the crossing condition $[0,y_1]\cap(y_2+[0,y_3])\neq \emptyset$ is equivalent to the containment condition $y_2\in [0,y_1]+[0,-y_3]$, this yields
\[M \lesssim \int_{s_1'B^2} \cdots \int_{s_m'B^2} \int_{(B^2)^{7-k-m}} 
\ind(p_3^L\in 
\begin{multlined}[t]
    [0,p_2^L]+[0,-p_4^L])
    \ind(q_3^L\in [0,q_2^L]+[0,-q_4^L]) \\
    P_1(x_1) \cdot \ldots \cdot P_m(x_m)
    \dd u_2 \cdots \dd u_4 \dd z_{k+1} \cdots \dd z_4,
\end{multlined}
\]
where $u_i\coloneq v_i|_L$, $z_i\coloneq w_i|_L$, and $p_i^L,q_j^L$ stand for the projected version of their counterparts $p_i, q_j$, and each $x_i$ corresponds to an edge length (see \eqref{eq:edgelengths}).
The geometric constraints given by the crossing conditions need to be handled individually. 

For each configuration, we aim to integrate out everything except the edge lengths $x_i$. 
The easiest type of integral to arise is the one similar to the expectation:
\begin{equation}\label{eq:projection integral}
\int_0^{s_i}P_i(x_i)x_i^n \dd x_i\approx h_i s_i^{d+n-2}
\end{equation}
with any $n\geq 0$ as $h_i\to 0$, with only absolute constants being implied by the $\approx$ notation.

We now turn to handling the configurations one by one; we refer the reader back to \eqref{tab:variance} for the details of each setting. 

\paragraph{Configuration 1.} 
The two crossings are essentially independent, and we get an upper bound that closely resembles $J^+(0)^2$ in the proof of \Cref{lemma:expectation}.
In particular, the two geometric constraints apply to two different points, namely $p_3^L=u_3$ and $q_3^L=z_3$. Integrating out these two points, the area of the corresponding parallelograms appears:
\begin{align*}
    M^{(1)} 
    & \lesssim \int_{s_1'B^2} \int_{s_2'B^2} \int_{s_3'B^2} \int_{s_4'B^2} 
    \begin{multlined}[t]
    P_1(x_1)P_2(x_2) P_3(x_3) P_4(x_4) \\
    \lambda_2([0,u_2]+[0,-u_4])\lambda_2([0,z_2]+[0,-z_4]) \dd z_4 \dd z_2 \dd u_4 \dd u_2 
    \end{multlined}\\
    & \lesssim \int_{s_1'B^2} \int_{s_2'B^2} \int_{s_3'B^2} \int_{s_4'B^2} P_1(x_1) P_2(x_2)  P_3(x_3) P_4(x_4) x_1 x_2 x_3 x_4 \dd z_4 \dd z_2 \dd u_4 \dd u_2
\end{align*}
with $(x_1,x_2,x_3,x_4) = (\|u_2\|,\|u_4\|,\|z_2\|,\|z_4\|)$, where the $x_1x_2x_3 x_4$ term in the second line comes from the linearity of the area.
Applying a polar transform, each exponent increases by one, yielding integrals of the form \eqref{eq:projection integral} with exponent $2$ for each $x_i$. This immediately gives $s_1^ds_2^d \cdot s_3^d s_4^d$, and matching lower bound can be derived the same way.

\paragraph{Configuration 2.1.} 
As in Configuration 1, the constraints apply to $p_3^L=u_3$ and $q_3^L=z_3$, and we obtain
\begin{align*}
M^{(2.1)} 
& \lesssim \int_{s_1'B^2} \int_{s_2'B^2} \int_{s_3'B^2} 
P_1(x_1)
\begin{multlined}[t]
P_2(x_2) P_3(x_3) \\
\lambda_2([0,u_2]+[0,-u_4])\lambda_2([0,u_2]+[0,-z_4]) \dd z_4 \dd u_4 \dd u_2 
\end{multlined}\\
& \lesssim \int_{s_1'B^2} \int_{s_2'B^2} \int_{s_3'B^2} P_1(x_1) P_2(x_2)  P_3(x_3)  x_1^2 x_2 x_3 \dd z_4 \dd u_4 \dd u_2
\end{align*}
with $(x_1,x_2,x_3)=(\|u_2\|,\|u_4\|, \|z_4\|)$.
In contrast to Configuration 1, here the second power of $x_1$ appears, due to $u_2$ appearing in both parallelograms. Consequently, after polar transform, the exponents are $3,2,2$, respectively, giving the limiting $s_1^{d+1} s_2^d s_3^d$. A matching lower bound can be obtained as well.

\paragraph{Configuration 2.3.} Both geometric constraints apply to $u_2 = p_3^L = q_3^L$, hence the area of the intersection of parallelograms appear:
\[M^{(2.3)} \lesssim  \int_{s_1'B^2}\cdots \int_{s_4'B^2} 
    \lambda_2 (([0,u_3]+[0,-u_4])
    \begin{multlined}[t]
    \cap ([0,z_3]+[0,-z_4])) \\
    P_1(x_1) P_2(x_2) P_3(x_3) P_4(x_4)\dd z_4 \dd z_3 \dd u_4 \dd u_3
    \end{multlined}\]
where now  $(x_1,x_2,x_3,x_4) = (\|u_3\|, \|u_4\|, \|z_3\|, \|z_4\|)$.
As an easy upper bound, we can upper bound the area of the intersection by the minimum of the areas, yielding $\min\{x_1x_2,x_3x_4\}$ by linearity, which yields the bound stated in the table. In general, this is not the correct order. This is because after applying a polar transform, the area of the intersection heavily depends on the angles, not only the edge lengths $x_i$. In particular, the proportion of angles that meaningfully contribute to the integral might be small, and thus influence the order. We can show, with the help of tools from integral geometry, that the correct order is actually
\[M^{(2.3)}\approx h_1h_2h_3h_4s_1^{d-1} s_2^{d-1} s_3^{d-1} s_4^{d-1} \min_{i\neq j} \{s_is_j\}.\]

\paragraph{Configuration 3.1.} Similarly to Configuration~2.3, both geometric constraints apply to $u_3 = p_3^L = q_3^L$, hence the area of the intersection of parallelograms appear:
\[M^{(3.1)} \lesssim  \int_{s_1'B^2}\cdots \int_{s_4'B^2} 
    \lambda_2([0,u_2]+[0,-u_4])
    \begin{multlined}[t]
    \cap ([0,u_2]+[0,-z_4])) \\
    P_1(x_1) P_2(x_2) P_3(x_3) \dd z_4 \dd u_4 \dd u_2
    \end{multlined}\]
with $(x_1,x_2,x_3)=(\|u_2\|,\|u_4\|,\|z_4\|)$.
Upper bounding by the minimum of the two areas $\min(x_1x_2,x_1x_3)=x_1\min(x_2,x_3)$ gives the order in the table. 
For a lower bound, the same problem as for Configuration 2.3 arises: for many angles, the intersection can have small area compared to the edge lengths. 
The correct order is instead
\[M^{(3.1)} \approx h_1h_2h_3 s_1^d s_2^{d-1} s_3^{d-1} \min \{s_1,s_2,s_3\}.\]

\paragraph{Configuration 2.2.} 
Here, we can a priori only integrate out one point, and the other crossing condition cannot be easily resolved:
\[M^{(2.2)} \lesssim  \int_{s_1'B^2} \int_{s_3'B^2} 
\begin{multlined}[t]
\bigg[ \int_{s_2'B^2} P_2(x_2) \lambda_2([0,u_2]+[0,-u_4]) \dd u_4 \bigg] \\
\bigg[ \int_{s_4'B^2} P_4(x_4)\ind ([0,z_3]\cap(u_2+[0,z_4])\neq \emptyset) \dd z_4 \bigg]
P_1(x_1) P_3(x_3) \dd z_3 \dd u_2
\end{multlined}
\]
with $(x_1,x_2,x_3,x_4) = (\|u_2\|, \|u_4\|, \|z_3\|, \|z_4\|)$.
For an upper bound, one can simply drop the second crossing condition, and have the $x_1x_2$ term coming from the area factor coming from the first crossing condition.
To obtain the correct order, it is necessary to take into account the second condition: if the distance of $u_2$ and the line segment $[0,z_3]$ is larger than $s_4'$, then the integral is zero. This consideration again yields an upper bound, which is not so easily evaluated, but we believe should give the correct order. 

\paragraph{Configuration 3.2.} 
Lastly, the conditions are interlinked in a way that no point can be easily omitted:
\[M^{(3.2)} \lesssim  \int_{s_1'B^2} \int_{s_3'B^2} 
\begin{multlined}[t]
\bigg[ \int_{s_2'B^2} 
P_2(x_2)\ind ([0,u_2]\cap(u_3+[0,u_4]\neq \emptyset) \dd u_4 \bigg] \\
\bigg[ \int_{s_4'B^2} P_4(x_4)\ind ([0,u_3]\cap(u_2+[0,z_4])\neq \emptyset) \dd z_4 \bigg]
P_1(x_1) P_3(x_3) \dd u_3 \dd u_2
\end{multlined}
\]
with $(x_1,x_2,x_3,x_4) = (\|u_2\|, \|u_4\|, \|u_3\|, \|z_4\|)$
An easy upper bound is given by dropping both crossing conditions; a more involved upper bound by replacing it by the indicator that $u_3$ and $[0,u_2]$, as well as $u_2$ and $[0,u_3]$ are sufficiently close.

\section{Discussion}\label{sec:discussion}

First, we see that widely applicable expectation (\Cref{thm:crossing expectation}) and variance (\Cref{sec:crossings variance}) results for the number of crossings $X_t(L)$ of projected random geometric graphs can be derived. The variance setup (e.g. \Cref{fig:variance}) in particular shows how dissimilar the different models can be.
However, \Cref{thm:RR crossings} also illustrates well that the first two moments cannot reliably be used as indicators for the distributional behaviour of $X_t(L)$. In such models, significantly different tools are necessary to understand $X_t(L)$, which are seemingly more difficult to handle universally.

Our other main question, planarity, exhibits vastly different behaviours as well.
Contrasting \Cref{thm:RGGplanarity,thm:SRGGplanarity,thm:RRplanarity}, one observes substantially distinct characteristics: from the domination of small non-planar subgraphs and thus $K_5$, to the domination of large non-planar subgraphs (with vertex count that grows with $t$), to that of $K_{3,3}$. 

Lastly, with regards to the Pareto soft random geometric graph, observe the following. 
Recall from \Cref{thm:SRGGcliques} that the expected number of complete graphs $K_n$ is
$\Theta_t(t^n r_t^{d(n-1)})$ whenever $\alpha > \frac 2n d$, which is the same order as the hard random geometric graph HRGG (i.e. when exactly edges with length at most $r_t$ are included in the graph).
On the other hand, if $\alpha < \frac 2n d$, the expectation is of the order $t^n (r_t^{\alpha})^{\binom n2}$, which is the same order as for the Poissonised Erd\H{o}s--R\'enyi random graph with connection probability $\Theta_t(r_t^\alpha)$ (that is, each pair of $\Po(t)$ many vertices is connected by an edge independently with probability $r_t^\alpha$). 
For certain ranges of $\alpha$, the same phenomenon (namely, that the behaviour of the Pareto SRGG follows that of the Erd\H{o}s--R\'enyi random graph) occurs for other questions as well: for the variance of the complete graph, path counts, and even the planarity results. 
This can partially be attributed to the following.
The connection probability $g(x) = \min\{1, r_t^\alpha x^{-\alpha}\}$ is lower bounded by $g_-(x) \coloneq  \ind (x\leq r_t) + c_0r_t^\alpha \ind(x>r_t)$ for appropriately chosen $c_0>0$ for all points $y,z\in W$ with $x\coloneq \|y-z\|$. 
One can thus construct a (Poissonised) Erd\H{o}s--R\'enyi graph  (ER) on $\eta_t$ with connection probability $c_0r_t^\alpha$ such that the edge sets satisfy $E_{\text{SRGG}}\supset E_{\text{HRGG}}\cup E_{\text{ER}}$, which at least explains one direction of the phenomenon described above.
Unfortunately, a similar upper bound cannot be easily obtained, and if the two graphs (HRGG and ER) behave similarly (i.e. neither dominates the other in some respect), the correct SRGG behaviour is likely not given by either of them separately (as in the $\alpha = \frac 2n d$ case with the additional logarithmic factor). 
However, we believe this observation sheds light on the behaviour of heavy-tailed SRGGs, and could be used to fill some gaps in the literature: any existing results that consider polynomial tails only do so for $\alpha>d$, which is essentially the extremal connection function that still allows for the use of geometric localisation tools.

\paragraph{Funding information}
This research was funded by the Deutsche Forschungsgemeinschaft (DFG, German Research Foundation) – Project-ID 531542011 (H.~Döring) and 531562368 (K.~Nagy). 

\bibliography{references}




\end{document}